\long\def\remove#1{}
\newtheorem{theorem}{Theorem}[section] 
\newtheorem{obs}[theorem]{Observation}
\newtheorem{corollary}[theorem]{Corollary}
\newtheorem{definition}[theorem]{Definition}
\newtheorem{proposition}[theorem]{Proposition}
\newenvironment{proof}{{\em Proof:}}{\hfill{\hfill\rule{2mm}{2mm}}}
\newcommand {\mm}[1] {\ifmmode{#1}\else{\mbox{\(#1\)}}\fi}
\newcommand{\img}{\mathrm img}
\newcommand{\supp}{\mathrm supp}
\newcommand{\coker} {\mathrm coker}
\newcommand{\rank}                {\mm {\rm rank}}
\newcommand{\cancel}[1]
\begin{document}

\title{A refinement of Betti numbers and homology in the  presence of a continuous function II
(the case of an angle valued map)}

\author{
Dan Burghelea  \thanks{
Department of Mathematics,
The Ohio State University, Columbus, OH 43210,USA.
Email: {\tt burghele@math.ohio-state.edu}}
}
\date{}

\date{}
\maketitle

\begin{abstract}
\vskip .1in 
For $f:X\to\mathbb S^1$ a continuous angle-valued map defined on a compact ANR $X,$  $\kappa$ a field and any integer $r\geq 0$ one proposes a refinement $\delta^f_r$ 
of the Novikov-Betti numbers   of  the pair $(X,\xi_f)$ and a refinement $\hat \delta^f_r$ of the Novikov homology of $(X,\xi_f)$ where $\xi_f$ denotes the integral degree one cohomology class represented by $f.$
The refinement $\delta^f_r$ is a configuration of points with multiplicity  located in $\mathbb R^2/\mathbb Z$ identified to the $\mathbb C\setminus 0$ whose total cardinality is the $r-$th Novikov--Betti number of the pair. The refinement $\hat \delta^f_r$ is a configuration of submodules of the $r-$th Novikov homology 
whose direct sum is isomorphic to the Novikov homology and with the same support as of $\delta^f_r.$
When $\kappa= \mathbb C$ the configuration $\hat\delta^f_r$ is convertible into a configuration of mutually orthogonal closed Hilbert submodules  of the $L_2-$homology of the infinite cyclic cover of $X$ defined by $f$ which is an $L^\infty(\mathbb S^1)-$Hilbert module.
One discusses the properties of these configurations namely, robustness with respect to continuous perturbation of the angle-values map and the Poincar\'e Duality
and one derives some computational applications in topology. The main results parallel the results for the case of real-valued map but with Novikov homology and Novikov Betti numbers replacing standard homology and standard Betti numbers.    
\end{abstract}

\thispagestyle{empty}
\setcounter{page}{1}

\maketitle

\setcounter{tocdepth}{1}
\tableofcontents

\section {Introduction } \label {I}

This paper is a sequel of \cite {B1} (which considers the case of real-valued map) 
but can be read independently of \cite{B1}.  Here we treat  the case of an angle-valued continuous map $f:X\to \mathbb S^1$ and complete results from \cite {BH}.  
In this paper without any additional specifications  an angle-valued map assumes that 
 the space $X$ is a compact ANR  (in particular a space homeomorphic to a finite simplicial complex or a compact Hilbert cube manifold) and the map $f$ is continuous.
 The map $f$ determines a degree one integral cohomology class $\xi_f \in H^1(X;\mathbb Z).$ 

We  fix a field $\kappa$  and an integer $r,$ $r=0,1,2, \dim X,$   and provide first
a configurations $\delta^f_r$ of finitely many points  with specified multiplicity located in the space $\mathbb T:= \mathbb R^2/\mathbb Z$ \footnote { $\mathbb R^2$ is equipped with the 
action $ \mu (n, (a,b)) \to (a+2\pi n, b+2\pi n).$} which can be  identified to the punctured plane  $ \mathbb C\setminus 0.$  It will be shown that  the set of points (counted with multiplicity) 
of this configuration has cardinality equal to the Novikov--Betti  number $\beta^N_r(X;\xi_f).$  In view of the identification of $\mathbb T$ with $\mathbb C\setminus 0$   the configuration 
$\delta^f_r$ can be also interpreted as  a monic \footnote{The monomial of highest degree has coefficient 1.} polynomial $P^f_r(z),$ with complex coefficients and nonzero free term of 
degree equal to the Novikov--Betti number, whose  roots are the points of the configuration $\delta^f_r.$  
  
We refine the configuration $\delta^f_r$ to the configuration $\hat \delta^f_r$ of $\kappa[t^{-1}, t]-$free modules indexed by $z\in \mathbb C\setminus 0$, each one a quotient of split free 
submodules of the $r-$th Novikov homology of $(X;\xi_f),$ 
and 
in case $\kappa= \mathbb C$ to the configuration
$\hat{\hat\delta}^f_r$ of closed Hilbert submodules of $L_2-$homology of $\tilde X,$ the infinite cyclic cover of $X$ defined by $\xi_f.$  
All configurations $\delta^f_r, \hat \delta^f_r, \hat {\hat \delta}^f_r$ are maps with finite support defined on $\mathbb C \setminus 0$ with $\delta^f_r(z)$ a nonpositive integer, $\hat \delta^f_r(z)$ 
a free $\kappa[t, t^{-1}]-$module and $\hat {\hat \delta}^f_r(z)$ a $L^\infty(\mathbb S^1)-$Hilbert module with ${\hat \delta}^f_r(z)$ resp. $\hat {\hat \delta}^f_r(z)$ of rank resp. von-Neumann 
dimension equal to $\delta^f_r(z).$

In this paper, for a fixed field $\kappa,$ the Novikov homology  $H^N_r(X;\xi)$ is a free $\kappa[t^{-1},t]-$module.   Novikov \cite{N} and most of the authors \cite {Pa} regard Novikov homology
as a $\kappa[t^{-1},t]]-$vector space, where $\kappa[t^{-1}, t]]$ denotes the field of Laurent power series with coefficients in $\kappa$  obtained by {\t extending the scalars} from 
$\kappa[t^{-1},t]$ to $\kappa[t^{-1},t]]$\footnote {The vector space is  equal to $H^N(X;\xi)\otimes_{\kappa[t^{-1},t]} \kappa[t^{-1},t]].$}. If 
the Novikov homology is regarded as a vector space over this field 
then $\hat \delta^ f_r$ is a configuration of vector subspaces, and this is entirely in analogy with the case of real-valued map treated in \cite{B1}.

The results about the configurations $\delta^f_r$ $\hat \delta^f_r$ and $\hat{\hat \delta}^f_r$  are formulated in Theorems \ref{TT1}, \ref {TT2} and \ref{TT3}
and are formally similar to Theorems 4.1, 4.2, and 4.3  in \cite{B1}, but  conceptually more complex  and technically more difficult to conclude.  In comparison with \cite{B1} there are however 
a number of differences and new features which deserve to be pointed out.
\begin{itemize}
\item  The location of the points in the support of the configurations $\delta^f_r, \hat \delta^f_r, \hat{\hat\delta} ^f_r$ is the space $\mathbb T:= \mathbb R^2/ \mathbb Z$ identified to
the punctured complex plane $\mathbb C\setminus 0$ by the map $\mathbb T\ni \langle a,b \rangle  \to z= e^{ia + (b-a)} \in \mathbb C\setminus 0$
and not $\mathbb R^2= \mathbb C$ as in \cite{B1}.
\item The Betti numbers $\beta_r (X)$ in \cite{B1}  are  replaced by the Novikov-Betti numbers $\beta^N_r(X;\xi)$ or by $L_2-$ Betti numbers $\beta^{L_2}_r(\tilde X),$  $\tilde X$ the infinite 
cyclic cover defined by $\xi= \xi_f,$ and the  homology  $H_r(X)$ is replaced by the Novikov homology of $(X, \xi)$ or by the $L_2-$homology of $\tilde X.$  
\item For $z= \langle a,b\rangle \in \supp \ \delta^f_r$ the configuration $\hat{\delta}^f_r$ takes as value $ \hat \delta^f_r\langle a, b\rangle= \hat {\delta}^f_r(z),$ a free $\kappa[t^{-1}, t]-$module 
which is a quotient $\hat {\mathbb F}_r(z) / \hat { \mathbb F}_r '(z)$ of split free submodules   
$\hat {\mathbb F}_r'(z) \subseteq \hat {\mathbb F}_r(z)\subseteq H_r^N(X;\xi).$ The configuration  $\hat\delta^f_r$ is derived from a  {\it  configuration of pairs of submodules} of $H_r^N(X,\xi_f),$ 
$\tilde\delta^f_r(z):= (\hat {\mathbb F}_r(z) , \hat {\mathbb F}_r'(z)),$  a concept  explained in section \ref{S2}.
\item In case $\kappa= \mathbb C,$ the ring of Laurent polynomials $\mathbb C[t^{-1}, t]$ has a natural completion to the finite von Neumann algebra $L^\infty(S^1)$ and $H_r^N(X;\xi_f)$ to a 
$L^\infty(\mathbb S^1)-$Hilbert module.
The Hilbert module structure, although unique up to isomorphism, depends on a chosen $\mathbb C[t^{-1}, t]-$compatible Hermitian inner product on $H^N_r(X;\xi_f),$ cf. section \ref{S2},  which 
always exists. With respect to a given $\mathbb C[t^{-1},t]-$compatible inner product  
the free module $H^N_r(X;\xi)$ can be  canonically converted into the $L^\infty(\mathbb S^1)-$Hilbert module $H^{L_2}_r(\tilde X),$ $\tilde X$ the infinite cyclic cover associated with $\xi,$ and  
the configuration $\hat \delta^f_r(z)$  into a configuration 
of mutually orthogonal closed Hilbert submodules  $\hat {\hat \delta}^f_r(z)\subseteq H^{L_2  }_r(\tilde X)$ with 
$\sum_{z\in \supp \delta^f_r}  \hat {\hat \delta}^f_r(z)= H^{L_2}_r(\tilde X).$ 
This conversion is referred below as the {\it von Neumann completion} and is described in section \ref{S2}.  A Riemannian metric  on $X,$ when $X$ is the underlying space of a closed smooth 
manifold, or a triangulation of $X,$ when $X$ is homeomorphic to a finite simplicial complex  
provides a canonical  inner product which leads to the  familiar $L_2-$homology,  $H^{L_2}_r(\tilde X).$  This is a particular case of a construction described in \cite {Lu97}. 
\item  The  refinement of the Poincar\'e Duality  stated in Theorem \ref{TT3} is derived from  the Poincar\'e Duality between Borel--Moore homology and cohomology of the open manifold $\tilde M.$   
\end{itemize} 

The paper ends up with a few topological applications, Observation \ref {C4} and Theorem \ref{T15}.
\vskip .1in
In section \ref{S2} one recalls the definition of  various spaces of configurations (of points with multiplicity, 
of submodules  and of pairs of submodules,  of mutually orthogonal closed Hilbert submodules of a
Hilbert module) 
 and of the relevant topologies on these spaces.  The configurations referred to above, $\delta ^f_r, \hat \delta ^f_r, \hat {\hat \delta} ^f_r,$ for $f: X\to \mathbb S^1$  an angle-valued map are defined 
 in section  \ref {S3} and all have the same support located
in $\mathbb T$ or $\mathbb C\setminus 0$.  
A point in $\mathbb T$ will be denoted by  $\langle a, b\rangle$ and  one in $\mathbb C\setminus 0$ by $z.$  

To formulate the results, for the reader's convenience  we recall some concepts and notations.\
\vskip .1in 

 For an angle-valued map  $f:X\to \mathbb S^1$ one denotes by 
 $\xi_f\in H^1(X;\mathbb Z)$ the integral cohomology class represented by $f$ and by 
 $\tilde f:\tilde X\to \mathbb R$  an infinite cyclic cover or {\it lift} of the map $f.$   
In section \ref{S2} one also defines the concepts of weakly tame and tame maps as well as of homologically regular and homologically critical values. The simplicial maps are always tame 
and then weakly tame. 
Informally, for a weakly tame map a {\it homologically regular} value is 
a complex number $z= e^{ia+b}\in \mathbb S^1$ s.t. the homology of the level of $z'$  in a small neighborhood of $z$ is unchanged and a {homologically critical} value
is a complex number $z= e^{ia+b}\in \mathbb S^1$ which is not homologically regular value.
  
For $\xi\in H^1(X;\mathbb Z)$ one denotes by:

\hskip.2in -  $C_\xi(X, \mathbb S^1),$ the space of continuous maps in the homotopy class defined by $\xi$ equipped with the compact open topology,  

\hskip .2in -  $\pi:\tilde X \to X,$ an infinite cyclic
cover defined by $\xi_f,$ or by $f.$

For a specified field $\kappa$ one denotes by:

\hskip .2in - $H^N_r(X; \xi),$ the Novikov homology  in dimension $r$ with respect to the field  $\kappa,$  

\hskip .2in - $\beta^N_r(X;\xi),$  the $r-$th Novikov--Betti number, cf section \ref{S2} for definitions.

In case $\kappa=\mathbb C$ the $L_2-$homology of $\tilde X$ in dimension $r$ will be denoted by $H_r^{L_2}(\tilde X).$ This is an $L^\infty(\mathbb S^1)-$Hilbert module.  In this case 
the von-Neumann dimension  of $H_r^{L_2}(\tilde X)$ is 
equals the Novikov--Betti number w.r. to the field $\mathbb C.$ 

The main results of this paper are collected in the following theorems: 
   
\begin{theorem} (Topological results) \label {TT1}\ 

\begin{enumerate}
\item   
The configurations $\delta^f_r, \hat \delta^f_r, \hat{\hat \delta}^f_r$ have the same support.  If $f$ is weakly tame and $\delta_r^f(z)
\ne 0,$  $z= e^{ia +(b-a)},$ then both $a$ and  $b$   
are homological critical values of $\tilde f.$ 
\item 
\begin{enumerate}
\item  $\sum_{z\in \mathbb C\setminus 0} \delta^f_r(z)=\beta^N_r(X;\xi_f),$ 
\item  $\bigoplus_{z\in \mathbb C\setminus 0} \hat \delta^f_r(z)\simeq H^N_r(X;\xi_f),$ 
\item When $\kappa= \mathbb C$ a $\mathbb C[t^{-1}, t]-$compatible inner product  on $H^N_r(X;\xi_f)$ (cf section \ref{S2} for definition) converts $\hat\delta ^f_r$ into a configuration 
$\hat{\hat\delta}^f_r$ of closed Hilbert submodules of $H^{L_2}_r(\tilde X)$ which satisfy 
\newline $\sum_{z\in \mathbb C\setminus 0} \hat{\hat \delta}^f_r(z)= H^{L_2}_r(\tilde X),$ and  $\hat{\hat \delta}^f_r(z)\perp \hat{\hat \delta}^f_r(z')$ for $z\ne z'.$
\end{enumerate}
\item If $X$ is a good ANR (cf section \ref{S2} for definition), in particular homeomorphic to a finite simplicial complex or to a compact Hilbert cube manifold,  then for an open and dense 
set of maps $f$ in $C_\xi(X,\mathbb S^1) $ one has $\delta^f_r(z)= 0\ \rm {or} \ 1.$
\end{enumerate}
\end{theorem}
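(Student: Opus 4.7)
The approach is to read off the three claims directly from the construction of $\delta^f_r$ and $\hat\delta^f_r$ given in Section \ref{S3}, unfolding $f$ to the infinite cyclic cover $\tilde f:\tilde X\to\mathbb R$ so that the situation becomes a $\mathbb Z$--equivariant version of the real-valued setup of \cite{B1}. The configurations should be built from a canonical interval/bar decomposition of the sublevel-set persistence module of $\tilde f$ with its deck-transformation action; each bar with endpoints $(a,b)\in\mathbb R^2$ descends to a point $\langle a,b\rangle\in\mathbb T$, and the total picture is governed by the fact that the Novikov homology $H^N_r(X;\xi_f)$ is a finitely generated free module over $\kappa[t^{-1},t]$.

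First I would settle (1). By construction $\hat\delta^f_r(z)$ is a free $\kappa[t^{-1},t]$--module whose rank is precisely $\delta^f_r(z)$, so $\supp\,\delta^f_r=\supp\,\hat\delta^f_r$; in the case $\kappa=\mathbb C$, the von Neumann completion preserves rank, identifying it with $\supp\,\hat{\hat\delta}^f_r$. For the critical-value statement, when $f$ is weakly tame a point $\langle a,b\rangle\in\supp\,\delta^f_r$ originates from a lift-bar whose birth and death occur at values $a$ and $b$ of $\tilde f$; since the sublevel-set homology of a weakly tame map can only change at homologically critical values, both $a$ and $b$ must be such.

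Next, (2) is almost formal once (1) is in place. Part (a) is a rank count:  summing $\delta^f_r(z)=\rank\,\hat\delta^f_r(z)$ over $z$ returns the $\kappa[t^{-1},t]$--rank of $H^N_r(X;\xi_f)$, which by definition is $\beta^N_r(X;\xi_f)$. Part (b) is the statement that the bar decomposition is a genuine splitting: the quotients $\hat{\mathbb F}_r(z)/\hat{\mathbb F}'_r(z)$ assemble into a direct sum isomorphic to $H^N_r(X;\xi_f)$, which follows from the splitness of each $\hat{\mathbb F}'_r(z)\subseteq\hat{\mathbb F}_r(z)\subseteq H^N_r(X;\xi_f)$ asserted in the configuration-of-pairs construction. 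Part (c) uses the $\mathbb C[t^{-1},t]$--compatible inner product from Section \ref{S2} to promote this algebraic splitting to a Hilbert-module decomposition: completing each free summand inside $H^{L_2}_r(\tilde X)$ produces a closed $L^\infty(\mathbb S^1)$--submodule, and the compatibility of the inner product with the module structure forces distinct summands to be orthogonal and forces their Hilbert sum to recover $H^{L_2}_r(\tilde X)$.

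Finally, (3) is a genericity argument. For $X$ a good ANR, piecewise-linear (or simplicial) representatives of the homotopy class $\xi$ are dense in $C_\xi(X,\mathbb S^1)$, and among these one can further perturb so that all homologically critical values of $\tilde f$ are simple and distinct; by the stability/robustness of $\delta^f_r$ announced in the introduction (and used in \cite{B1}), this property is preserved under small $C^0$ perturbations, giving an open dense set. For such $f$ each pair of bar endpoints is realized by a unique bar, so $\delta^f_r(z)\in\{0,1\}$.

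The main obstacle I anticipate is part (2c). The passage from the algebraic $\kappa[t^{-1},t]$--splitting of $H^N_r$ to an orthogonal $L^\infty(\mathbb S^1)$--Hilbert decomposition requires one to show that the chosen compatible inner product actually makes the distinct free summands orthogonal (not just algebraically independent) and that Hilbert completion commutes with the finite direct sum of these submodules; establishing this cleanly (including independence from the particular inner product up to isomorphism) is the technically delicate step, and is where the analogy with \cite{B1} breaks down and genuinely new input from the theory of $L^\infty(\mathbb S^1)$--Hilbert modules is needed.
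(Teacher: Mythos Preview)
Your outline is essentially the paper's own proof: item 1 is read off from Proposition~\ref{P3} and the definitions, item 2 follows from the key isomorphism $\pi(r)\cdot{}^{\mathcal S}I_r$ of Proposition~\ref{P34} (and its $\kappa[t^{-1},t]$--linear refinement, Proposition~\ref{P310}), and item 3 is the Morse/p.l.\ genericity argument borrowed from \cite{B1}. Two small points where you diverge from the paper are worth flagging.

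First, in item 3 you write ``for $X$ a good ANR, piecewise-linear representatives are dense'', but p.l.\ maps make sense only on simplicial complexes. The paper handles the general good ANR case by the Hilbert cube manifold trick (Theorem~\ref{TT}): one passes from $X$ to $X\times I^\infty$, which is homeomorphic to $K\times I^\infty$ for a finite simplicial complex $K$, and uses Observation~\ref{O} to identify $\delta^f_r$ with $\delta^{f_K}_r$. Without this reduction your density argument is incomplete for $X$ not already simplicial.

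Second, your worry about 2(c) is well placed, but the resolution is not as delicate as you suggest and does not require new Hilbert-module theory. The orthogonality of the $\hat{\hat\delta}^f_r(z)$ comes directly from the nesting property built into the definition of a \emph{configuration of pairs} in Section~\ref{S2}: for $\alpha\ne\beta$, the condition $W_\alpha\subset W_\beta\Rightarrow W_\alpha\subseteq W'_\beta$ guarantees that the orthogonal complement of $W'_\alpha$ in $W_\alpha$ (which is how $\hat{\hat\delta}^f_r$ is realised inside the completion) lands inside $W'_\beta$, hence is orthogonal to the corresponding complement for $\beta$. The finite direct sum then completes to the full Hilbert module because the ranks add up to $\beta^N_r$ by 2(a) and the completion of a rank-$k$ free module is $l_2(\mathbb Z)^{\oplus k}$. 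So 2(c) is bookkeeping once the algebraic splitting of 2(b) and the inner-product construction of Section~\ref{S2} are in hand.
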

Here and below "$=$``  denotes equality or canonical isomorphism and "$\simeq $ indicates  the existence of an isomorphism.

 Items 1. and 2.a) were first established in \cite {BH} but only for tame maps and by different methods. 

Anticipating section \ref{S2} we denote by  $\mathcal Conf_k(X)$ the set of configurations of $k$ points in $X$  and by $\mathcal CONF_V(X)$  the set of configurations of {\it subspaces} 
of the module $V$ indexed by the points of $X.$  If $V$ is a $\kappa-$vector space a {\it  subspace} means a genuine vector subspace, if $V$ is a free module a {\it subspace} means a 
free split submodule and if $V$ is a Hilbert module a {\it subspace} means a closed Hilbert submodule. 
In view of this notation 
item 2.a) indicates that $\delta^f_r\in \mathcal Conf_{\beta^N_r(X;\xi_f)} (\mathbb T),$ $T= \mathbb C\setminus 0),$
which can be identified to the ${\beta^N_r(X;\xi_f)}$-fold symmetric  product of $\mathbb C\setminus 0,$   
 hence to  the space of degree ${\beta^N_r(X;\xi_f)}$-monic polynomials with nonzero free  coefficient,    
hence to $\mathbb C^{{\beta^N_r(X;\xi_f)}-1}\times (\mathbb C\setminus 0)$ 
 and Item 2.b) implies that any family of splittings as defined in section {\ref{S3}}, makes $\hat \delta^f_r$  an element in $\mathcal CONF_V (\mathbb T),$ with $V$ the free  
$\kappa[t^{-1},t]-$module $ {H^N_r(X;\xi_f)}.$ 
Item 2.c) states  that $\hat {\hat \delta}^f_r$ is an element in $\mathcal CONF^O_{H^{L_2}_r(\tilde X)} (\mathbb T),$ the space configurations of mutually orthogonal closed 
Hilbert submodules of the $L^{\infty}(\mathbb S^1)-$Hilbert
 $H^{L_2}_r(\tilde X).$ 
 
Associated to $\xi$ there is the infinite cyclic cover $\pi:\tilde X\to X,$ 
a  principal $\mathbb Z-$ covering unique up to isomorphism, such that any continuous $f:X\to\mathbb S^1$ has lifts $\tilde f: \tilde X\to \mathbb R$ (i.e. $\mathbb Z-$ equivariant maps which 
induce by passing to $X=\tilde X/ \mathbb Z,$ the map $f$ cf. section \ref{S2})   unique up to an additive constant of the form $2\pi k.$ For two lifts $\tilde f$ of $f$ and $\tilde g$ of $g$ denote 
by $D(\tilde f, \tilde g)= \sup _{\tilde x\in \tilde X} |\tilde f(\tilde x)- \tilde g(\tilde x)|$ and  denote by $D(f,g)$ the minimal of $D(\tilde f, \tilde g)$ over all possible lifts of $f$ and $g,$ cf. 
section \ref{S2}. $D(f,g)$ provides a metric on $C_\xi(X,\mathbb S^1).$

\vskip .1in 
\begin{theorem} (Stability)\label{TT2}
Suppose $X$ is a compact ANR and $\xi\in H^1(X;\mathbb Z).$  

1. The assignment 
$$ C_{\xi}(X, \mathbb S^1)\ni f \rightsquigarrow \delta^f_r= P_r^f(z)\in \mathbb C^{\beta_r^N(X;\xi)-1}\times (\mathbb C\setminus 0)$$
 is a  continuous  map.

Moreover with respect to the canonical metric $\underline D$ provided by the identification of  
the space of configurations with
the $\beta^N_r(X;\xi)-$fold  
symmetric product of $\mathbb T,$  
one has the estimate $$\underline D (\delta^f , \delta^g) < 2 D(f,g).$$  

2. If $\kappa= \mathbb C$ and the spaces of configurations $\mathcal CONF_{H_r^{L_2}(\tilde X)}(\mathbb C\setminus 0)$ is equipped with either the {\it fine}  or the {\it natural} collision 
topology (cf section 2 for definitions) then the assignment $f \rightsquigarrow \hat{\hat \delta}^f_r$  is continuous. 
\end{theorem}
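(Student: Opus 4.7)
The plan is to reduce Part 1 to the stability theorem for real-valued maps from the prequel \cite{B1}. Given $f,g\in C_\xi(X,\mathbb S^1)$ sufficiently close in the compact-open topology, one chooses compatible lifts $\tilde f,\tilde g:\tilde X\to \mathbb R$ with $\|\tilde f-\tilde g\|_\infty = D(f,g)$. The construction of $\delta^f_r$ in section \ref{S3} (in the spirit of \cite{BH}) realises each point $\langle a,b\rangle\in \supp \delta^f_r\subset \mathbb T$ as the $\mathbb Z$-orbit of a pair of homologically critical values $(a,b)$ of $\tilde f$. The stability theorem for real-valued maps in \cite{B1} then yields a matching between the $(a,b)$-data of $\tilde f$ and of $\tilde g$ under which each coordinate is displaced by at most $\|\tilde f-\tilde g\|_\infty = D(f,g)$. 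Descending the matching to $\mathbb T$ and accounting for the simultaneous displacement of the two coordinates of each $\langle a,b\rangle$ produces the estimate $\underline D(\delta^f_r,\delta^g_r) < 2 D(f,g)$, the strict inequality reflecting the fact that $\underline D$ is computed on the quotient $\mathbb R^2/\mathbb Z$ rather than on $\mathbb R^2$.

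For Part 2 the key observation is that the cohomology class $\xi_f = \xi$ is constant along a continuous deformation inside $C_\xi(X,\mathbb S^1)$, so the ambient $L^\infty(\mathbb S^1)$-Hilbert module $H^{L_2}_r(\tilde X)$ does not depend on $f$; only the orthogonal decomposition $\sum_z \hat{\hat\delta}^f_r(z) = H^{L_2}_r(\tilde X)$ varies. Via the von Neumann completion of section \ref{S2}, each $\hat{\hat\delta}^f_r(z)$ is the range of a spectral projector for the $\mathbb Z$-monodromy operator associated to $\tilde f$ over a small neighbourhood of $z$ in $\mathbb C\setminus 0$. Continuity of $f\mapsto \hat{\hat\delta}^f_r$ in the fine topology on $\mathcal CONF_{H^{L_2}_r(\tilde X)}(\mathbb C\setminus 0)$ then reduces to (i) continuity of the supports, provided by Part 1, together with (ii) continuity of these spectral projectors in $f$, which follows from the bounded functional calculus and the estimate $\|\tilde f-\tilde g\|_\infty=D(f,g)$. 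Continuity in the natural collision topology is automatic since it is coarser than the fine topology.

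The main technical obstacle lies in the behaviour at collisions of points in the configuration: along a one-parameter family $f_t$, two points in $\supp \delta^{f_t}_r$ may merge or one may split, so the individual Hilbert submodules $\hat{\hat\delta}^{f_t}_r(z)$ need not each be separately continuous in $t$. What is continuous, and this is precisely the content of the fine and natural collision topologies defined in section \ref{S2}, is the submodule attached to any cluster of points whose union varies continuously. Verifying this requires uniform norm bounds on the spectral projectors attached to open neighbourhoods of a cluster, valid uniformly over $f$ in a small neighbourhood. A secondary obstacle is pinning down the sharp constant $2$ in Part 1 together with the strict inequality: this depends on the precise flavour of bottleneck metric on the quotient $\mathbb T$ adopted in section \ref{S2} and on a careful bookkeeping of the coordinate displacements actually realised by the matching at the level of lifts.
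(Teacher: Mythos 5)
Your Part~1 outline and the paper's argument share a starting point (lift to $\tilde f:\tilde X\to\mathbb R$, compare $\tilde f$ to $\tilde g$, descend to $\mathbb T$), but the reduction you propose has a gap you have not addressed. You ``apply the stability theorem for real-valued maps in \cite{B1}'' to $\tilde f$ and $\tilde g$; that theorem, however, is stated for a \emph{compact} $X$, whereas $\tilde X$ is a non-compact infinite cyclic cover. The paper does not invoke the global stability theorem of \cite{B1} at all. It instead observes that only the \emph{local} estimate (Proposition~3.16 of \cite{B1}, restated here as Proposition~\ref{P316}) extends to proper maps on locally compact ANRs, applies it to $\tilde f$ on $\tilde X$ for tame $f$, and then re-runs the entire density-and-canonical-homotopy machinery (Steps~1--3 of subsection~4.2 of \cite{B1}) in the angle-valued setting, together with the Chapman--Edwards Hilbert-cube reduction to go from simplicial complexes to arbitrary compact ANRs. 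You omit the Hilbert-cube step entirely, and you attribute the constant $2$ and the strict inequality to ``the simultaneous displacement of the two coordinates'' and to the quotient $\mathbb R^2/\mathbb Z$. Neither attribution is correct: the constant $2$ is already present in the purely real-valued local estimate of Proposition~\ref{P316} (it is the diameter $2\epsilon$ of the box $D(a,b;2\epsilon)$ into which the points migrate under an $\epsilon$-perturbation), and has nothing to do with passing from $\mathbb R^2$ to $\mathbb T$.

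The Part~2 argument rests on an incorrect description of $\hat{\hat\delta}^f_r(z)$. You assert that it is ``the range of a spectral projector for the $\mathbb Z$-monodromy operator $\ldots$ over a small neighbourhood of $z$ in $\mathbb C\setminus 0$.'' This is not what the paper constructs. The monodromy operator $t_r$ on $H^{L_2}_r(\tilde X)\cong L^2(\mathbb S^1)^{\oplus k}$ is unitary with spectrum in $\mathbb S^1$, so its spectral projectors are indexed by Borel subsets of $\mathbb S^1$ (and for $L^2(E)\subset L^2(\mathbb S^1)$, $0<|E|<2\pi$, they are Hilbert submodules that are \emph{not} completions of split free $\mathbb C[t^{-1},t]$-submodules). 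In contrast, each $\hat{\hat\delta}^f_r(z)$ is the von Neumann completion of the box-quotient $\mathbb F^f_r\langle a,b\rangle/\mathbb F'^f_r\langle B(a,b;\epsilon)\rangle$, a split free submodule of $H^N_r(X;\xi_f)$, realised orthogonally via a chosen $\mathbb C[t^{-1},t]$-compatible inner product; the coordinates $(a,b)$ encoding $z$ are sublevel/superlevel cutoffs for $\tilde f$, not points of the spectrum of $t_r$. Because your proposed proof of Part~2 argues continuity via ``bounded functional calculus'' and ``uniform norm bounds on the spectral projectors,'' it is building on a false identification and would not go through. The paper's actual route is the strengthened statement (\ref{E5}) in Proposition~\ref{P316}: under a $<\epsilon(f)/3$ perturbation $g$, the free submodule attached to a $2\epsilon$-cluster of points of $\supp\delta^g_r$ \emph{equals} (not merely approximates) the submodule attached to the corresponding point of $\supp\delta^f_r$; after von Neumann completion this gives equality of the clustered Hilbert submodules, yielding continuity in the fine topology, and hence in the coarser natural topology. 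Your closing remark that the natural topology is coarser than the fine topology (so continuity in fine implies continuity in natural) is correct and agrees with the paper.
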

Item 1. was first  established in \cite{BH} for $X$ homeomorphic to a simplicial complex. 

\begin{theorem}  (Poincar\'e Duality) \label{TT3}
Suppose $M$ is a closed topological manifold of dimension $n$ \footnote {this theorem will be verified only in case $M$ is homeomorphic to a finite simplicial complex} which is $\kappa-$orientable 
and $f: M\to \mathbb S^1$  an angle-valued map with $\xi_f\ne 0.$ Then one has the following.
\begin{enumerate}
\item  
$\delta^f_r \langle a,b\rangle = \delta^{ f}_{n-r} \langle b, a\rangle,$ equivalently $\delta^f_r(z)= \delta^f_{n-r}(\tau z)$ with $\tau(z)= z |z|^{-2} e^{i\ln |z|}.$
\item The Poincar\'e Duality between Borel--Moore homology of $\tilde M$ and the cohomology of $\tilde M$ induces the isomorphisms of $\kappa[t^{-1},t]-$modules  
 from $H^N_r(M;\xi)$ to $H^N_{n-r}(M;\xi)$ which intertwine $\hat \delta^f_r \langle a, b\rangle $ and 
$\hat\delta^f_{n-r} \langle b, a\rangle.$  
Precisely a collection of compatible $N-$splittings $\mathcal S_r'$s  (cf definition \ref{D38} in section \ref{S3}), 
additional data which always exist,  provide the canonical isomorphisms of $\kappa[t^{-1},t]-$modules 
$$ PD^{S}_r: H^N_r(M;\xi_f)\to H^N_{n-r}(M;\xi_f)$$ 
$$ PD^{S}_r\langle a,b\rangle:\hat \delta^{\tilde f}_r\langle a,b\rangle   \to \hat \delta^{\tilde f}_{n-r}\langle b,a\rangle$$
$$ I^{S}_r:\oplus_{\langle a,b\rangle \in \mathbb T} \ \hat\delta_r\langle a, b\rangle \to H^N_r(M;\xi_f) \footnote {The finiteness of 
the rank of $H^N_r(M;\xi_f)$ implies that $\hat \delta^f_r\langle a, b\rangle= 0$ for all but finitely many pairs $\langle a, b\rangle$} $$
s.t. the diagram 
$$\xymatrix{ \oplus_{\langle a,b\rangle \in \mathbb T}\  \hat\delta_r\langle a, b\rangle\ar[d]_{I^S_r} \ar[rr]^{\oplus PD^{S}_{r}\langle a,b\rangle}&&
\oplus_{{\langle a,b\rangle \in \mathbb T}}\  \hat\delta_{n-r}\langle b, a\rangle \ar[d]_{I^S_{n-r}}\\
H^N_r(M;\xi_f)\ar[rr]^{PD^{S}_r} &&H^N_{n-r}(M;\xi_f)}$$
is commutative.
\item For $\kappa=\mathbb C$  a $C[t^{-1},t]-$compatible Hermitian inner product on
$H_r^N(M;\xi_f)$ (cf.section \ref{S2} for definition) provides  canonical compatible $N-$splittings $\mathcal S_r $   
such that  the von-Neumann completion (described in section \ref{S2}) leads to the canonical isomorphisms 
$$PD^{L_2}_r: H^{L_2}_r(\tilde M)\to H^{L_2}_{n-r}(\tilde M)$$ 
$$PD^{L_2}_r\langle a,b\rangle:\hat{\hat \delta}^{L_2}_r\langle a,b\rangle   \to \hat {\hat \delta}^{L_2}_{n-r}\langle b,a\rangle$$
$$I^{L_2}_r:\oplus_{\langle a,b\rangle \in \mathbb T}  \hat{\hat \delta}_r\langle a, b\rangle \to H^{L_2}_r(\tilde M)$$
which make the diagram
$$\xymatrix{ \oplus_{\langle a,b\rangle \in \mathbb T}\ \hat{\hat \delta}_r\langle a, b\rangle\ar[d]_{I^{L_2}_r} \ar[rr]^{\oplus PD^{L_2}_r\langle a,b\rangle}&&
\oplus_{\langle a,b\rangle \in \mathbb T} \ \hat{\hat\delta}_{n-r}\langle b, a\rangle \ar[d]_{I^{L_2}_{n-r}}\\
H^{L_2}_r(\tilde M)\ar[rr]^{PD^{L_2}_r} &&H^{L_2}_{n-r}(\tilde M)}$$
commutative.
A Riemannian metric  or a triangulation of a closed smooth or topological manifold  
 provides  canonical $\mathbb C[t^{-1},t]-$compatible Hermitian inner products on $H^N_r(M;\xi_f)$ and therefore the isomorphisms claimed in item 3. 
\end{enumerate}
\end{theorem}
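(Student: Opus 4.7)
The plan is to reduce all three statements to a $\mathbb Z$-equivariant version of Poincaré--Lefschetz duality on the open manifold $\tilde M$, the infinite cyclic cover of $M$ determined by $\xi_f$. Working with a triangulation of $M$ lifted to $\tilde M$, classical duality provides a canonical isomorphism $PD:H^{BM}_r(\tilde M)\to H^{n-r}(\tilde M)$ that is equivariant for the deck-transformation action of $\mathbb Z$. After Novikov completion, as described in section \ref{S2}, the two sides become respectively the Novikov homology and Novikov cohomology of $(M,\xi_f)$ in complementary degrees; the $\kappa$-orientability of $M$ lets one identify the Novikov cohomology in degree $n-r$ with the Novikov homology in degree $n-r$, producing the map $PD^{S}_r$ promised in item 2.

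For item 1, I would compute both sides of the identity $\delta^f_r\langle a,b\rangle=\delta^f_{n-r}\langle b,a\rangle$ using the bar-code construction of section \ref{S3}, in which $\delta^f_r$ records the parameters at which $r$-dimensional classes are born and die along the filtration of $\tilde M$ by sublevels of $\tilde f$. Under the Borel--Moore/cohomology duality, an $r$-cycle in $\tilde M$ corresponds to a compactly supported $(n-r)$-cocycle, and the sublevel filtration by $\tilde f^{-1}((-\infty,t])$ is Lefschetz-dual to the superlevel cofiltration by $\tilde f^{-1}([t,\infty))$. The effect on the pair of parameters $(a,b)$ indexing each bar is exactly the interchange $a\leftrightarrow b$, matching the involution $\tau(z)=z|z|^{-2}e^{i\ln|z|}$ on $\mathbb T\cong\mathbb C\setminus 0$.

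Item 2 then amounts to choosing a family of compatible $N$-splittings $S_r$ (in the sense of Definition \ref{D38}) realizing $\hat\delta^f_r$ and transporting them through $PD_r$; compatibility of $PD$ with the filtration, together with item 1, forces the transported splittings to simultaneously realize $\hat\delta^f_{n-r}$ with swapped indices, and the commutative diagram is then tautological, with $PD^{S}_r\langle a,b\rangle$ defined as the restriction of $PD^{S}_r$ to each summand. For item 3, a triangulation of $M$ gives, by counting lifts with unit weight, a canonical $\mathbb C[t^{-1},t]$-compatible Hermitian inner product on $H^N_r(M;\xi_f)$; von Neumann completion then promotes $PD^{S}_r$ to an $L^\infty(\mathbb S^1)$-isometric isomorphism of $L^2$-homologies, and the mutually orthogonal decompositions in dimensions $r$ and $n-r$ match under this isometry in the required sense.

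The main obstacle is item 2, specifically the verification that $PD$ respects the Novikov bar-code structure at the level of individual summands rather than only as an abstract $\kappa[t^{-1},t]$-module isomorphism. One must track carefully the asymmetry between Borel--Moore chains (which allow infinite sums that become convergent power series after Novikov completion) and compactly supported cochains (which do not), and show that after completion this asymmetry reproduces exactly the involution $\langle a,b\rangle\mapsto\langle b,a\rangle$ on the support. Once this bookkeeping is in place, the existence of a family of compatible $N$-splittings follows as in the real-valued case treated in \cite{B1}, and item 3 follows from item 2 by completion.
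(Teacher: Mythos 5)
Your overall strategy matches the paper's: run $\mathbb Z$-equivariant Poincar\'e--Lefschetz duality on the open manifold $\tilde M$ between Borel--Moore homology and cohomology, then descend to Novikov homology and track boxes. You also correctly locate where the work lies, namely in showing that the duality respects the box structure and not merely the isomorphism type of $\kappa[t^{-1},t]$-modules. The trouble is that you then label this step ``bookkeeping'' and defer it, when it is in fact the entire technical content of the proof. Two things are missing, corresponding to the paper's Propositions 5.1 and 5.2. First, one must show that $PD^{BM}_r\colon H^{BM}_r(\tilde M)\to (H_{n-r}(\tilde M))^\ast$ carries the subspace $^{BM}\mathbb F^{\tilde f}_r(a,b)$ isomorphically onto $(\mathbb G^{\tilde f}_{n-r}(b,a))^\ast$, where $\mathbb G^{\tilde f}_{n-r}(b,a)=H_{n-r}(\tilde M)/(\mathbb I^{\tilde f}_b+\mathbb I^a_{\tilde f})$; this comes from the exactness of the two ladder diagrams built from $\tilde M_a,\ \tilde M^a$ and ordinary linear-algebra duality, and it delivers the swap $(a,b)\leftrightarrow(b,a)$ at the level of subquotients, not just an involution at the module level. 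Second, and more subtly, you must compare the \emph{ordinary} box $\mathbb F^{\tilde f}_r(a,b)\subseteq H_r(\tilde M)$, from which $\hat\delta^{\tilde f}_r$ is actually built, with its Borel--Moore counterpart. The key fact is that the comparison map $\mathbb F^{\tilde f}_r(a,b)\to {}^{BM}\mathbb F^{\tilde f}_r(a,b)$ is surjective with kernel independent of $(a,b)$ and equal to the torsion submodule $T(H_r(\tilde M))$ (this is the paper's Proposition 5.2, proved by a careful double-limit argument over $\tilde M_{-l}$ and $\tilde M^t$). Without this you cannot identify $^N\mathbb F^{\tilde f}_r(a,b)=\mathbb F^{\tilde f}_r(a,b)/T(H_r(\tilde M))$ with $^{BM}\mathbb F^{\tilde f}_r(a,b)$ and therefore cannot transport $\hat\delta^{\tilde f}_r(a,b)$ through $PD$ at all: the very map you want to restrict is not defined on the space carrying the configuration. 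Your remark about the asymmetry between Borel--Moore chains and compactly supported cochains is pointing at exactly this issue, but naming the obstacle is not the same as resolving it.

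A smaller remark: your phrasing that ``after Novikov completion the two sides become Novikov homology and Novikov cohomology'' suggests tensoring $H^{BM}_r(\tilde M)$ and $H^{n-r}(\tilde M)$ with the Laurent series field, which would give the classical Novikov groups as vector spaces over $\kappa[t^{-1},t]]$. That route can produce the abstract isomorphism $H^N_r\simeq H^N_{n-r}$, but it discards the integral $\kappa[t^{-1},t]$-module structure the theorem requires; the paper instead works with the free quotient $H_r(\tilde X)/T H_r(\tilde X)$ precisely so that the submodules $^N\mathbb F^f_r\langle a,b\rangle$ and the splitting data retain meaning. Finally, note that item~1 is a corollary of item~2 (dimensions of the two sides of $PD^S_r\langle a,b\rangle$), so your separate bar-code argument for item~1, while plausible, is redundant; the real burden is item~2.
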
 
Item 1. was  first established in \cite{BH}.

Theorem \ref {TT3} item 2. implies the following

\begin {obs} \label {C4}\ 

If  $M$ is a compact manifold  with boundary $\partial M,$ 
and $H^N_r(\partial M; \xi_{f_{\partial M}})$ \footnote {with $f_{\partial M}$ the restriction of $f$ to $\partial M$}  
vanishes for all $r$ then $H^N_r(M;\xi_f)\simeq H^N_{n-r} (M;\xi_f).$
\end{obs}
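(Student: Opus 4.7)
The plan is to reduce to the closed manifold case already handled by Theorem \ref{TT3} item 2 via the classical doubling construction. I would form the double $DM := M \cup_{\partial M} M$, which is a closed topological $n$-manifold, and extend $f$ to $Df: DM \to \mathbb S^1$ by using the map $f$ on each of the two copies of $M$; these agree on $\partial M$ (both restrict to $f_{\partial M}$) and so glue to a continuous angle-valued map. The induced class $\xi_{Df} \in H^1(DM;\mathbb Z)$ restricts to $\xi_f$ on each half. Applying Theorem \ref{TT3} item 2 to $(DM, Df)$ produces an isomorphism of $\kappa[t^{-1},t]$-modules
$$H^N_r(DM;\xi_{Df}) \simeq H^N_{n-r}(DM;\xi_{Df}).$$

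Next I would invoke the Mayer--Vietoris long exact sequence for the cover of $DM$ by (open collar thickenings of) the two copies of $M$, with Novikov coefficients determined by $\xi_{Df}$. Using the compatibility of the restrictions of $\xi_{Df}$ noted above, the sequence reads
$$\cdots \to H^N_r(\partial M;\xi_{f_{\partial M}}) \to H^N_r(M;\xi_f)^{\oplus 2} \to H^N_r(DM;\xi_{Df}) \to H^N_{r-1}(\partial M;\xi_{f_{\partial M}}) \to \cdots$$
Under the standing hypothesis that $H^N_\ast(\partial M;\xi_{f_{\partial M}})=0$, this collapses to isomorphisms $H^N_r(M;\xi_f)^{\oplus 2} \simeq H^N_r(DM;\xi_{Df})$ for every $r$, and analogously in degree $n-r$. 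Concatenating with the Poincar\'e isomorphism for $DM$ yields
$$H^N_r(M;\xi_f)^{\oplus 2} \simeq H^N_{n-r}(M;\xi_f)^{\oplus 2}.$$
Since Novikov homology is a finitely generated free module over the PID $\kappa[t^{-1},t]$, its rank is determined by any such isomorphism, and free modules over a PID are classified by rank, so the factor of two cancels to give $H^N_r(M;\xi_f) \simeq H^N_{n-r}(M;\xi_f)$.

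The main technical point I expect to have to justify is the Mayer--Vietoris sequence with Novikov coefficients together with the fact that the infinite cyclic cover of $DM$ defined by $\xi_{Df}$ restricts over each copy of $M$ to the infinite cyclic cover defined by $\xi_f$ (and similarly over $\partial M$); this is what makes the coefficient modules in the sequence the correct ones. Both facts follow from setting up Novikov homology at the chain level as the homology of the cellular chain complex of the appropriate cyclic cover, tensored with $\kappa[t^{-1},t]$, where ordinary Mayer--Vietoris for excisive triads applies verbatim, and the pullback of the deck-transformation structure under the inclusions $M \hookrightarrow DM$ and $\partial M \hookrightarrow DM$ is the automatic consequence of $\xi_{Df}|_M = \xi_f$ and $\xi_{Df}|_{\partial M} = \xi_{f_{\partial M}}$.
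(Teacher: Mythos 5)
Your proposal follows the same route as the paper: form the double $DM = M \cup_{\partial M} M$, extend $f$, use the Mayer--Vietoris long exact sequence (for the homology of the infinite cyclic covers, made into $\kappa[t^{-1},t]$-modules) to deduce that the vanishing of $H^N_\ast(\partial M;\xi_{f_{\partial M}})$ yields $\beta^N_r(DM;\xi_D)=2\beta^N_r(M;\xi_f)$, and then invoke the Poincar\'e duality for the closed manifold $DM$ (Theorem~\ref{TT3}, item~2). The only cosmetic difference is that you phrase things in terms of module isomorphisms and cancel the factor of two by classifying free $\kappa[t^{-1},t]$-modules by rank, whereas the paper works directly with the Novikov--Betti numbers; the substance is identical.
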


If $X$ is connected and $u\in \overline \kappa\setminus 0,$ $\kappa$ the algebraic closure of $\kappa,$  denote by $(\xi,u)$  the local coefficient system defined by the rank one 
$\kappa-$representation 
 $$\xymatrix {(\xi,u): \pi_1(X,x_0)\ar[r] & H_1(X;\mathbb Z)\ar[r]^{\xi} &\mathbb Z\ar[r]^{u^ \cdot} & \overline \kappa\setminus 0}$$ 
where $\xi$ is interpreted as a group homomorphism $\xi: H_1(M.\mathbb Z)\to \mathbb Z$ and $u$ as the homomorphism $u^ \cdot(n)= u^n.$ Denote by $H_r(X;(\xi, u))$ the homology 
with coefficients in $(\xi,u)$ and by $\beta_r(X; (\xi,u))$ the dimension of this $\kappa-$vector space. 

For $\xi\in H^1(M;\mathbb Z)$ the set of Jordan cells $\mathcal J_r(X;\xi)$ was defined in \cite{BD11} and from a different perspective of relevance in the theorem below, discussed in \cite{BH} 
and \cite{B}. One denotes  by $\mathcal J_r(X;\xi)(u)$  the set of Jordan cells $(\lambda, k)$ with $\lambda = u\in \overline \kappa\setminus 0.$

\begin{theorem}\label {T15}
Suppose $(M^n,\partial M)$ is a compact manifold with boundary $\xi\in H^1(M;\mathbb Z)$ s.t.  $\beta^N_r(\partial M,\xi |_{\partial M})=0$ for all $r.$ Suppose that $M$ retracts by deformation 
to a simplicial complex of dimension $\leq [n/2],$ where $[n/2]$ denotes the integer part of $n/2$
and $\chi(M)$ is the Euler-Poincar\'e characteristic w.r. to the field $\kappa.$
Then we have.
\begin{enumerate}
\item  If $n=2k$ then one has:
\begin{equation}
\begin{aligned}
a)\  \beta^N_r(X;\xi)= &\begin{cases}  0 \ \rm{if}\  r\ne k\\ (-1)^k\chi (M) \rm{if}\  r = k\end{cases}\\
b)\  \beta_r(X) =&\begin{cases}  \mathcal J_{r-1}(X; \xi)(1) + \mathcal J_r(X; \xi)(1) \ \rm{if}\  r\ne k\\ \mathcal J_{k-1}(X; \xi)(1) + \mathcal J_k(X; \xi)(1)  +(-1)^k\chi (M), r=k \end{cases}, \\
c)\  \beta_r(X;\hat u \xi) =&\begin{cases}  \mathcal J_{r-1}(X; \xi)(u) + \mathcal J_r (X; \xi)(1/u)\ \rm{if}\  r\ne k\\  \mathcal J_{k-1}(X; \xi)(u) + \mathcal J_k(X; \xi)(1/u)  +(-1)^k\chi (M), r=k \end{cases}
\end{aligned}
\end{equation}

\item If $n=2k+1$ then and one has: 
\begin{equation}
\begin{aligned}
a)\  \beta^N_r(X;\xi)= \ &0 \\
b)\  \beta_r(X) =\ & 
 \mathcal J_{r-1}(X; \xi)(1) + \mathcal J_r(X; \xi)(1) \\
c)\  \beta_r(X;(\xi,u)) =\ &	  \mathcal J_{r-1}(X; \xi)(1/u) + \mathcal J_r(X; \xi)(u). 
\end{aligned}
\end{equation}
\item  If $V^{n-1}\subset M^{n}$ is compact  proper submanifold (i,e, $V\pitchfork \partial M, \footnote {$\pitchfork$= transversal}$ and $V\cap \partial M= \partial V$) whose Poincar\'e dual cohomology 
class is $\xi_f$ and $H_r(V)=0$ then the set of Jordan cells $J_r(M,\xi)$  is empty for $r>0$ and  $J_0(M,\xi)=\{(\lambda;1), \lambda=1\}$
\end{enumerate}
\end{theorem}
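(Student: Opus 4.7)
The plan is to combine three ingredients: the Novikov Poincar\'e duality of Observation~\ref{C4}, the vanishing of $H^N_r(M;\xi)$ in high degrees forced by the low-dimensional deformation retract, and the Milnor-type identity from \cite{BH},~\cite{BD11} expressing twisted Betti numbers in terms of $\beta^N_r(M;\xi)$ and the Jordan-cell counts $\mathcal J_*(M;\xi)(u)$.

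First I would prove 1a) and 2a). The deformation retract onto a simplicial complex of dimension $\leq [n/2]$ gives $H^N_r(M;\xi)=0$ for $r>[n/2]$ by cellular computation with $\kappa[t^{-1},t]$-coefficients, and the boundary hypothesis lets one apply Observation~\ref{C4}, yielding $H^N_r(M;\xi)\simeq H^N_{n-r}(M;\xi)$ and hence vanishing for $r<n-[n/2]$. For $n=2k{+}1$ the two ranges cover every $r$, giving 2a). For $n=2k$ only $r=k$ can survive, and since the alternating sum of ranks of the cellular $\kappa[t^{-1},t]$-chain complex equals the ordinary $\chi(M)$, we get $\beta^N_k(M;\xi)=(-1)^k\chi(M)$.

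For 1b), 1c), 2b), 2c) I would invoke the short exact sequence coming from the universal coefficient theorem applied to $C_*(\tilde M)\otimes_{\kappa[t^{-1},t]}V_u$, where $V_u$ is the rank-one module on which $t$ acts by $u$. Decomposition of the torsion part of $H_*(\tilde M)$ into Jordan blocks yields
\begin{equation*}
\beta_r(M;(\xi,u)) \;=\; \beta^N_r(M;\xi) \,+\, \mathcal J_{r-1}(M;\xi)(u)\,+\,\mathcal J_r(M;\xi)(u),
\end{equation*}
which, after substituting the $\tau$-symmetry $\mathcal J_r(M;\xi)(u)=\mathcal J_{n-r}(M;\xi)(1/u)$ of Theorem~\ref{TT3}, produces the precise $u$ versus $1/u$ placements required in 1c) and 2c). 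Specialising $u=1$ then gives 1b), 2b); the extra $(-1)^k\chi(M)$ in the middle dimension of the even case is the surviving Novikov Betti number from Step~1.

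For part~3, I would cut $M$ transversally along the Poincar\'e dual submanifold $V$ to obtain a cobordism $W$ whose boundary contains two copies of $V$, and realise $\tilde M$ as the $\mathbb Z$-indexed gluing of copies of $W$ along $V$. The resulting Mayer--Vietoris sequence of $\kappa[t^{-1},t]$-modules, with the deck transformation acting as the shift, collapses under $H_r(V)=0$ for $r>0$ to $H_r(\tilde M)\cong \kappa[t^{-1},t]\otimes H_r(W)$ for $r\geq 1$, a free module with empty Jordan-cell content; for $r=0$ the boundary map becomes multiplication by $1-t$ on $\kappa[t^{-1},t]$, whose cokernel $\kappa[t^{-1},t]/(1-t)\cong\kappa$ is the single Jordan cell $(1,1)$. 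The main obstacle I anticipate is tracking the $u$ versus $1/u$ placement between the Milnor identity and the $\tau$-symmetry in Step~2: the interaction with covariant/contravariant twisting and with the difference between homology and cohomology conventions is what splits the two parity cases 1c) and 2c), and it must be pinned down with care.
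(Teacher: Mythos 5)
Your treatment of $1a)$ and $2a)$ is essentially the same as the paper's: the small-dimension deformation retract kills $H^N_r$ above middle degree, Observation~\ref{C4} reflects this below middle degree, and the invariance of the Euler--Poincar\'e characteristic across $\beta_r$ and $\beta^N_r$ pins down the surviving $\beta^N_k$ when $n=2k$. Your part~3 is a hands-on reformulation of the paper's citation of Theorem~1.4 of \cite{BH}: both proceed by splitting $\tilde M$ along a preimage of a regular value isotopic to $V$, and the ``linear relation $R^\theta_r$'' of \cite{BH} is trivial for $r>0$ and the identity for $r=0$ precisely because your Mayer--Vietoris terms $H_r(V)$ vanish for $r>0$; your route is more explicit but not substantively different.

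The genuine gap is in your Step~2, the derivation of $1b)$, $1c)$, $2b)$, $2c)$. You correctly identify the Milnor-type identity
$\beta_r(M;(\xi,u)) = \beta^N_r(M;\xi) + \mathcal J_{r-1}(M;\xi)(u) + \mathcal J_r(M;\xi)(u)$
as the right input (this is, in effect, Proposition~4.1 of \cite{BH}, which the paper just invokes), but you then propose to fix the $u$ versus $1/u$ placement by ``the $\tau$-symmetry $\mathcal J_r(M;\xi)(u)=\mathcal J_{n-r}(M;\xi)(1/u)$ of Theorem~\ref{TT3}.'' Theorem~\ref{TT3} asserts no such thing: its statement concerns only the Novikov homology $H^N_*(M;\xi)$, the configurations $\delta^f_r, \hat\delta^f_r$, and their refinements; it says nothing about the torsion module $TH_r(\tilde M)$ and hence nothing about the Jordan cells $\mathcal J_r$. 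You are importing a Jordan-cell/monodromy duality that is not established anywhere in this paper. Moreover, even granting such an $r\mapsto n-r$, $u\mapsto 1/u$ symmetry, substituting it into the Milnor identity produces $\mathcal J_{r-1}(u) + \mathcal J_{n-r}(1/u)$, whose dimension index $n-r$ does not match the $\mathcal J_r(1/u)$ (resp.\ $\mathcal J_{r-1}(1/u)$) appearing in the theorem's display unless $r=k$ exactly. So the substitution step as you describe it neither has a source in this paper nor closes the bookkeeping you yourself flag as the main obstacle. The paper avoids this entirely by citing Proposition~4.1 of \cite{BH}, where the $u$/$1/u$ conventions are already resolved with the correct parity-dependent form.
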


As pointed out to us by L Maxim, the complement $X = \mathbb C^n\setminus V$ of a complex hypersurface $V\subset \mathbb C^n, V:= \{(z_1, z_2, \cdots z_n) \mid f(z_1, z_2, \cdots z_n)=0\}$ regular at infinity, 
equipped with the canonical class $\xi_f\in H^1(X;\mathbb Z)$ defined by $f: X\to \mathbb C\setminus 0$ is an example of an open manifold 
which has as compactification a manifold with boundary equipped with a  degree one integral cohomology class which satisfies the hypotheses and then the conclusion of Theorem \ref{T15} above.  

Item 1. recovers    
 a calculation of L Maxim, cf \cite{M14} and \cite {FM16} \footnote {The  Friedl-Maxim results  state the vanishing of more general and more sophisticated $L_2-$homologies and Novikov type  
 homologies. Such results can be also recovered via the appropriate Poincar\'e Duality isomorphisms}   
that the complement of an algebraic hypersurface regular at infinity   
has vanishing Novikov homologies in all dimension but $n.$

At this point we thank L.Maxim for challenging questions  and informations about some of his work.    
I also thank the referee for providing repairs in the statement and the proof of Theorem 1.2, pointing out misprints and requesting useful additional explanations.

{\bf Acknowledgement: } This paper was written when the author was visiting MPIM-Bonn  (Nov. 2015-March 2016). He  thanks MPIM  for partial support during that period.  
\vskip .1in
\section{Preparatory material} \label {S2}

{\bf Angles and angle valued maps}
 
 An {\it angle} is a complex number $\theta= e^{it} \in \mathbb C, t\in \mathbb R$ and the set of all angles is denoted  by $\mathbb S^1=\{\theta= e^{it}\mid t\in \mathbb R\}.$  The space of 
 angles, $\mathbb S^1,$  is equipped with the distance 
 $$d(\theta_2, \theta_1)=\inf \{ |t_2- t_1|\mid
  e^{it_1}=\theta_1, e^{it_2}= \theta_2 \}.$$ One has $d(\theta_1, \theta_2)\leq \pi.$ With this description $\mathbb S^1$ is an oriented one dimensional manifold with the orientation provided 
  by a specified generator $u$ of $H_1(\mathbb S^1;\mathbb Z).$ 
  
A closed interval in $I\subset \mathbb S^1$ with ends the angles $\theta_1= e^{it_1}$ and $\theta_2= e^{it_2}$ is the set
$I:= \{e^{it} \mid  t_1\leq t\leq t_2, t_2-t_1 <2\pi \}.$
\vskip .2in

In this paper all real- or angle-valued maps  are  proper continuous maps defined on an ANR, hence locally compact  in case of real-valued  and compact  in case of angle-valued. Recall that an 
ANR, cf.\cite {Hu}, is a space homeomorphic to a closed subset $A$ of a metrizable space which has an open neighborhood $U$ which retracts to $A.$   Simplicial complexes, finite or infinite 
dimensional manifolds are ANR's.
\vskip .2in

\noindent {\bf Infinite cyclic cover}

For an angle-valued map  $f: X\to \mathbb S^1$ let 
$f^\ast: H^1(\mathbb S^1;\mathbb Z)\to H^1(X;\mathbb Z)$ be the   homomorphism induced by $f$ in integral cohomology and let  $\xi_f= f^\ast(u) \in H^1(X;\mathbb Z).$  
The assignment $f\rightsquigarrow \xi_f$ establishes a bijective correspondence between  the set of homotopy classes of continuous maps from $X$ to $\mathbb S^1$ and $H^1(X;\mathbb Z).$

Recall the following. 

 \hskip .1in   An infinite cyclic cover of $X$ 
is a map $\pi: \tilde X\to X$  together with  a free action $\mu:\mathbb Z\times \tilde X\to \tilde X$ such that  $\pi(\mu(n,x))= \pi(x)$ and the map induced by $\pi$ from $\tilde X/ \mathbb Z$ to $X$ 
is a homeomorphism. The infinite cyclic cover $\pi: \tilde X\to X$ is said to be  {\it associated to $\xi$} if any continuous proper map $\tilde f:\tilde X\to \mathbb R$ which satisfies $\tilde f(\mu(n,x))= \tilde f(x)+2\pi n$ 
induces a map  $f: X\to \mathbb R/2\pi\mathbb Z= \mathbb S^1$ with $\xi_f$ equal to $\xi.$ The homeomorphisms $\mu(k, \cdots): \tilde X\to \tilde X$ are called {\it deck transformations}.

 \hskip .1in  For two infinite cyclic covers  associated to $\xi,$  $\pi_i: \tilde X_i\to X, i=1,2$, there exists  homeomorphisms $\omega:\tilde X_1\to \tilde X_2$ which intertwines the free actions $\mu_1$ and $\mu_2$ 
 and  satisfy $\pi_2\cdot \omega= \pi_1.$  

 \hskip .1in Given $\pi: \tilde X\to X, \mu:\mathbb Z\times \tilde X\to \tilde X$ an infinite cyclic cover  
 and  $f:X\to \mathbb S^1$ an  angle  valued map 
 the map  
$\tilde f: \tilde X\to \mathbb R$ is a called a {\it lift of $f$}  if  $\tilde f(\mu(n,x))= \tilde f(x)+2\pi n$  and by passing to the quotients $X= \tilde X/\mathbb Z$  and $\mathbb S^1= \mathbb R/\mathbb Z$ the map $\tilde f$ 
induces exactly $f.$ A lift $\tilde f$ 
provides the following pull back diagram 
\begin{equation}
\xymatrix{ &\mathbb R\ar[r]^p &\mathbb S^1\\
&\tilde X \ar[r]^{\pi}\ar[u]_{\tilde f} &X\ar[u]_f.}
\end{equation} 
where  $p(t)$ is given by $p(t)= e^{it}\in \mathbb S^1.$  
Two lifts of $f$ , $\tilde f_1$ and $\tilde f_2$ differ by a deck transformation, i.e. there exists $k\in \mathbb Z$ with $\tilde f_2= \tilde f_1\cdot \mu(k,\cdots).$

Given $f:X\to \mathbb S^1$ there exists a canonical infinite cyclic cover $\pi: \tilde X\to X$ the pull back of  $p:\mathbb R\to \mathbb S^1,$ by $f$ and a canonical lift of $f,$ $\tilde f: \tilde X\to \mathbb R,$ 
 precisely  $\tilde X= \{x,r) \mid f(x)= p(t)\}$ and  $\tilde f(x,t)= t.$

Denote by $C_\xi(X;\mathbb S^1)$ the set of all continuous angle-valued maps on $X$ in the homotopy class defined by $\xi$  and let $\pi:\tilde X\to X$ be an infinite cyclic cover associated to $\xi.$
For $f, g \in  C_\xi (X, \mathbb S^1)$ and $\tilde h, \tilde l$ lifts of $f$ and $g$ 
denote by 
\begin{enumerate}
\item $D(\tilde h, \tilde l):= \sup_{\tilde x\in \tilde X} |\tilde h(x)- \tilde l(x)|$
\item $D(f,g):= \inf _{\begin{cases} \tilde h\  \text {lift  of} \  f \\ \tilde l \ \text {lift  of} \ g \end{cases}} D(\tilde h, \tilde l).$
\end{enumerate}
Note that if $d(f(x), g(x))<\pi$ then $D(f,g)= \sup _{x\in X} d(f(x), g(x))$ where $d$ is the standard metric on $\mathbb S^1,$ precisely $d(u,v) = \inf_{\theta, \theta'} | \theta-\theta'|$ for $u= e^{i\theta}, v= e^{i\theta'}.$  

\begin{obs}\label {O21} \ 

\begin {enumerate}
\item For any $f,g$ there exists lifts $\tilde f, \tilde g$ \ s.t. $D(f,g)= D(\tilde f, \tilde g).$
\item $D$ is a complete metric which induces the compact open topology on $C_{\xi}(X,\mathbb S^1).$
\item If $f, g\in C_{\xi}(X,\mathbb S^1),$ $\tilde f, \tilde g$ are liftings of $f$ and $g$  and $0=t_0< t_1 < t_2 <\cdots t_N <t_{N+1}=1$  any subdivision of the interval $[0,1],$ then the canonical homotopy $\tilde f_t= t \tilde f + (1-t) \tilde g$ satisfies 
$D(\tilde f, \tilde g)= \sum _{0\leq i\leq N} D(\tilde f_{t_i}, \tilde f_{t_{i+1}}).$ 
If $f_t$ 
denotes  the homotopy between $f$ and $g$ induced from $\tilde f_t,$ by passing to quotient  and $D(f,g)= D(\tilde f, \tilde g)$ then 
\begin{equation}D(f, g)\geq  \sum _{0\leq i\leq N}  D(f_{t_i}, f_{t_{i+1}}).\end{equation} 
\end{enumerate}
\end{obs}
The verifications are straightforward and left to the reader.

A homotopy $f_t$ as in item 3. is referred to as a {\it canonical homotopy}.

\vskip .2in
\noindent {\bf Regular and critical values and tameness} 

Let $f: X\to \mathbb S^1 \rm {or} \ \mathbb R$ be a proper continuous map.
\begin{itemize}

\item The value $s\in \mathbb S^1 \rm {or}\  \mathbb R$ is {\it regular / homologically regular}   if there exists a neighborhood $U$  of $s$ such that  
for any $s'\in U$ the inclusion   $f^{-1}(s')\subset f^{-1}(U)$ is a homotopy equivalence / homology  \footnote {w.r. to a specified field $\kappa$} equivalence and {\it critical / homologically critical } if not 
regular / homologically regular. We denote by $CR(f)$ the set of critical values and by $CRH^\kappa(f)$ the set of homologically critical values. Clearly $CRH^\kappa (f)\subseteq CR(f).$  Since all the time the field $\kappa$ will be  fixed once for all, $\kappa$ will be discarded from notation and we write $CRH(f)$  instead of $CRH^\kappa(f).$ 

\item The map $f$ is {\it weakly tame} if for any $s\in \mathbb S^1 \rm {or}\ \mathbb R$ the subspace $f^{-1}(s)$ is an ANR. This implies that for any closed interval $I$ in $\mathbb R$ or $\mathbb S^1$ $f^{-1}(I)$ is an ANR. 

\item  The map is {\it tame} if it is weakly tame and in addition the set of critical values is discrete and the distance between any two critical values bounded from below by a positive number $\epsilon(f)$. 

\item The map is {\it homologically tame} w.r. to a specified field if the set of homologically critical values is discrete and the distance between any two such homologically critical values is bounded from below by a positive number.

For an angle valued map $f:X\to \mathbb S^1$ consider $\tilde f: \tilde X\to \mathbb R$ a lift of $f.$
The map $f$ is weakly tame, resp. tame,  resp. homologically tame iff so is  $\tilde f.$  If $X$ is a finite simplicial complex then a map $f:X\to \mathbb S^1$ is called p.l. (piecewise linear) map if  some and then any lift $\tilde f :\tilde X\to \mathbb R$ is a p.l.map. 
An angle $\theta\in \mathbb S^1$ is a regular value,  resp. critical value, resp. homologically critical value if 
$\theta = e^{it}$ with $t$ a regular value, resp. critical value resp. homologically critical value for $\tilde f.$  
For technical reasons we will need the following concept.
\item A compact ANR $X$ is called a {\it good ANR} if the set of  
tame maps (real- or angle-valued maps) is dense in the set of all continuous maps w.r. to the compact open topology. In particular any finite simplicial complex is a good ANR  in view of the fact that the set of p.l.maps is dense in the set of continuous maps and each p.l. real-or angle valued map is tame.  
\end{itemize}
\vskip .2in

\noindent {\bf The von-Neumann completion:}

When $\kappa=\mathbb C,$ 
the ring of Laurent polynomials  $\mathbb \mathbb C[t^{-1}, t],$  equivalently the group ring $\mathbb C [\mathbb Z]$ of the infinite cyclic group $\mathbb Z,$  is an algebra with involution $\ast$ and trace $tr$ defined as follows. 
 
 If $a= \sum_{n\in \mathbb Z}  a_n t^n$ then:
\begin{equation*}
\begin{aligned}
 \ast (a):= a^\ast =& \sum_{n\in \mathbb Z} \overline a_n t^{-n} \\ tr (a)=&  a_0.
\end{aligned}
\end{equation*}
with $\overline a$ denoting the complex conjugate of the complex number $a.$

The algebra $\mathbb C[\mathbb Z]= \mathbb C[t^{-1},t]$ can be considered as a subalgebra of the algebra of bounded linear operators on the separable Hilbert space $$l_2(\mathbb Z) = \{a_n, n\in \mathbb Z \mid \sum_{n\in \mathbb Z}  |a_n|^2 <\infty\} $$  with the Hermitian scalar product $\mu(a,b)= \sum _{n\in \mathbb Z} a_n\overline b_n.$ 

The linear operator  defined by a Laurent polynomial (an alternative name for an element in $\mathbb C[\mathbb Z]= \mathbb C[t^{-1},t]$) is given by the multiplication of  the Laurent polynomial regarded as a sequence with all but finitely many components equal to zero with a sequences in $l_2(\mathbb Z).$   

One denotes by $\mathcal N$ the {\it weak closure} of $\mathbb C[\mathbb Z]$   in the space of bounded operators of the Hilbert space $l_2(\mathbb Z)$ when each  element of $\mathbb C[\mathbb Z]$ is regarded as such operator which is  a {\it finite von Neumann algebra}, with involution and trace extending the ones defined above, cf \cite{Lu97}. 

This algebra $\mathcal N$ is referred below as the von--Neumann completion of the group ring $\mathbb C[\mathbb Z]$ and is isomorphic to the familiar $L^\infty(\mathbb S^1)$ via Fourier series transform (whose inverse assigns to a complex valued function defined on $\mathbb S^1$ its Fourier series).   

Given a free $\mathbb C[t^{-1}, t]-$module  $M,$ a 
 {\it $C[t^{-1},t]-$compatible Hermitian inner product}  is a map $\mu: M\times M\to \mathbb C$ which satisfies:

\begin{enumerate}
\item linearity: $\mathbb C-$linear in the first variable,  
\item symmetry: $\mu(x,y) = \overline {\mu}(y,x),$   
\item positivity :  

a) $\mu(x,x)\in \mathbb R_{\geq 0}\subset\mathbb C$

b) $\mu(x,x)=0$ iff $x=0,$
\item for any $x,y$ there exists $n$ s.t.  $\mu(t^nx,y)=0$
\item $\mu(tx,ty)= \mu(x,y).$
\end{enumerate}
Items (1) to (4) make $\mu$ a non degenerate Hermitian inner product on $M$ and 
items (5) and (6) defines the $\mathbb C[t^{-1},t]-$compatibility of the Hermitian  inner product $\mu.$ 
\vskip .1in 
An equivalent data is provided by a $\mathbb C[t^{-1},t]-$valued inner product cf  \cite {Lu97} which is given by 
 a map $\hat \mu: M\times M\to \mathbb C[t^{-1},t]$ which satisfies: 
\begin{enumerate}
\item $\mathbb C[t^{-1},t]-$linear in the first variable,
\item symmetric in the sense that $\hat\mu(x,y)= \hat\mu(y,x)^\ast,$  $x,y\in M,$
\item positive definite  in the sense that it satisfies 
\begin{enumerate}
\item $\hat \mu(x,x)\in \mathbb C[t^{-1},t]_+$ with $\mathbb C [t^{-1},t]_+$ the set of elements of the form $a a^\ast$ and  
\item $\hat \mu(x,x)=0$ iff $x=0,$ 
\end{enumerate}
\item the map $\tilde{\hat \mu}: M\to {\it Hom}_{\mathbb C^[t^{-1},t]} (M,\mathbb C[t^{-1},t])$ defined by $\tilde{\hat \mu}(y)(x)= \hat \mu(x,y)$ is one to one. 
\end{enumerate}

The relation between $\mu$ and $\hat \mu$ is given by
\begin{equation}
\begin{aligned}
\hat\mu(x,y)= &\sum_{n\in \mathbb Z} t^n \mu(t^{-n}x,y)\\
\mu(x,y)= &tr  \hat \mu(x,y).
\end{aligned}
\end{equation} 

Clearly $\mathbb C[t^{-1}, t]$-valued  inner products exist. Indeed, if $e^1, e^2, \cdots e^k$ is a base of $M$ then $$\mu (\sum a_ie^i, \sum b_j e^j):= \sum a_i (b_i)^\ast$$ provides  such inner product.

Note that if $M$ is f.g. but not free,  a map "$\hat \mu$`` as above satisfying all properties but 3.(b) and instead satisfying  " $\ker \tilde {\hat\mu} $ equal to the  $\mathbb C[t^{-1},t]-$torsion of $M$``,  induces a {$C[t^{-1},t]-$compatible Hermitian inner product} $\mu$ on $M/ T M,$ where
$T M$ is the collection of torsion elements in $M.$  

By completing the  $\mathbb C-$vector space $M$ (the underlying vector space of the f.g. $\mathbb C[t, t^{-1}]-$module $M$) w.r. to the  Hermitian inner product $\mu$  one obtains a Hilbert space $\overline M$ which is an $\mathcal N$-Hilbert module, cf \cite {Lu97}, isometric to $l_2(\mathbb Z)^{\oplus k},$ $k$ the rank of $M.$

Two different $\mathbb C[t^{-1}, t]$-valued  inner products,  $\mu_1$ and $\mu_2,$ lead to the isomorphic (and then also isometric) Hilbert modules $\overline M_{\mu_1}$  and
$\overline M_{\mu_2}.$
This justifies discarding $\mu$  from notation. 

If one identifies $\mathcal N$ to $L^\infty (\mathbb S^1)$ and 
$l_2(\mathbb Z)^{\oplus k}$ to $L^2(\mathbb S^1)^{\oplus k}$  (by interpreting the sequence $\sum_{n\in \mathbb Z} a_n t^n$ as the complex valued function $\sum_{n\in \mathbb Z} a_n e^{i n\theta}$)  the $\mathcal N-$ module structure on $l_2(\mathbb Z)^{\oplus k}$  becomes the $L^\infty(\mathbb S^1)-$module structure on $(L^2(\mathbb S^1)^{\oplus k}$ 
given by the component-wise multiplication of $L^\infty-$functions with $L^2-$functions.

If $N\subset M$ is a free split submodule of the f.g free $\mathbb C[t^{-1},t]$-module $M$ and $\mu$ is an $\mathbb C[t^{-1}, t]$-valued inner product on $M,$ then $\overline N_{\mu}$ is a closed Hilbert submodule of $\overline M_{\mu}.$ Moreover if $N'_i\subseteq N_i \subseteq M,$ $i= 1,2,\cdots $ is a collection of pairs of split submodules then the collection $N_i/N'_i$ is a collection of free modules, quotient of submodules of $M,$ and the von Neumann completion process converts    
 $N_i'$ and $N_i$ into closed Hilbert submodules of $\overline M$ and each $N_i/N'_i$ into a Hilbert module canonically identified to the orthogonal complement of the kernel of the  projection $N_i\to N_i/ N_i'$ inside $N_i.$  The process of  passing from ($\mathbb C[t^{-1},t], M$) to  ($\mathcal N, \overline M$) referred above as  {\it von Neumann completion} was considered in \cite{Lu 95} for any group ring $\mathbb C[\Gamma] $ and f.g. projective $\mathbb C[\Gamma]$-module.    
\vskip .2in 

\noindent {\bf Configurations}
\vskip .1in
{\it (a) Configurations of points with multiplicity}

\noindent A configuration of points with multiplicity in $X$ is a map with finite support $\delta: X\to \mathbb Z_\geq 0.$  The support of $\delta$  is the set 
$$\supp\  \delta:= \{x\in X \mid \delta(x)\ne 0\}$$ and the cardinality of the support is 
$$\sharp \delta := \sum \delta(x).$$
Denote by $Conf_n(X)$ the set of configurations of cardinality $n.$  Clearly
$Conf_n(X)= S^n(X)= X^n/\Sigma_n$ the quotient of the $n-$fold  product $X^n$ by the action of the permutation group of $n-$elements, $\Sigma_n,$ and this description equips $Conf_n(X)$ with the quotient topology induced  from the topology of the product space $X^n.$ There is an alternative but equivalent way, see below, to describe this topology as {\it collision topology}.
\vskip .1in
{\it (b)Configuration of subspaces}

\noindent Let $A$ be a commutative ring with unit, in particular a field $\kappa= A,$ and $V$ a free module of finite rank $rank V= n$ and let 
$\mathcal S(V)$ be the set of split submodules of $V.$ 

A configuration of subspaces of $V$ indexed by points in $X$ is a map with finite support $\hat \delta:X\to \mathcal S(V)$ such that
$$\oplus i_x: \oplus \hat \delta(x)\to V,$$ with $i_x:\hat\delta(x)\to V$ the inclusion, is an isomorphism.
As before 
$$\supp\ \delta:= \{x\in X \mid \hat \delta(x)\ne 0\}.$$  

Denote by $\mathcal CONF_V(X)$ the set of configurations of such submodules (subspaces  if $V$ is a vector space). 
The configuration  $\hat\delta $ is called a {\it refinement of $\delta\in  Conf_{\rank V}(X)$}  if $\delta(x)= \rank \hat\delta(x).$

 If $\mathcal S(V)$ is equipped with a topology then $\mathcal CONF_V(X)$ carry a topology, the {\it collision topology}, defined by specifying for each element  a system of {\it fundamental neighborhoods}  \footnote {described in \cite{B1} in case  $A$ is a field}. 
  
  A fundamental neighborhood  of a configuration $ \hat \omega\in \mathcal CONF_V(X)$ with support $\{x_1, x_2,\cdots x_k\}$ and values $\hat \omega (x_i)= V_i$ is specified by: 
  
(a): a collection of disjoint open sets of $X,$  $(U_1, U_2, \cdots U_k),$ each $U_i$ neighborhood of $x_i,$  

(b): a collection of neighborhoods of $\hat\delta(x_i)$ in $\mathcal S(V),$  $\mathcal V_1\ni \hat\delta(x_1), 
\mathcal V_2\ni \hat\delta(x_2),\dots, \mathcal V_k\ni \hat\delta(x_k)$   

\noindent and consists of 
$$\{  \hat \delta\in  CONF_V(X) \mid  \sum_{x\in U_i\cap \supp \hat \delta} \hat \delta(x) \in \mathcal V_i\}.$$

If the topology on $\mathcal S(V)$ is the discrete topology then the topology on $CONF_V(V)$ is referred to as the {\it fine collision topology}. 

In case of configuration of points with multiplicity the  topology on  $\mathcal Conf_{\rank V} (X)$ can be described in the same way, simply by replacing $\mathcal S(V)$ by $\mathbb Z_{\geq 0}$ equipped with the discrete topology.
Note that the assignment 
$$CONF_V(X) \ni \hat \delta \leadsto \delta\in \mathcal Conf_{\rank V} (X)$$ is continuous. 

If $A=\kappa$ with $\kappa= \mathbb R $ or $\kappa=\mathbb C $  and the vector space $V$ (not necessary of finite dimension)  is equipped with a Hilbert space structure and $\mathcal S(V)$ is the set of closed subspaces, then one can consider on $\mathcal S(V)$ the topology induced from the {\it norm topology} on the space of bounded operators on $V.$
The closed subspaces of $V$ are identified to the self-adjoint projectors. In this case the corresponding topology  on $\mathcal CONF_V(X)$ is called the  {\it natural collision topology}. If $V$ is a Hilbert space the subset of configurations with the additional property that $\hat\delta(x)\perp \hat\delta(y)$ is denoted by $\mathcal CONF^O_V(X).$

\vskip .1in
{\it (c) Configurations of pairs}

\noindent Let $A$ be a commutative ring with unit, $V$ a free module of finite rank,  $rank V= N,$ and let 
$\mathcal P(V)$ be the set of pairs $(W, W'),$  $W\supseteq W'$ split submodules of $V.$ 
The pair $(W, W')$ is called called virtually trivial if $W=W'$

A {\it configuration of pairs} of submodules of $V$ parametrized by $X$ is a map $\tilde \delta: X\to \mathcal P(V)$ with  finite support 
$$\supp\  \tilde \delta:= \{x\in X\mid \tilde \delta(x) \ne \rm {virtually\  trivial}\}$$
which satisfies the following properties.

\begin{enumerate}
\item The set $\mathcal A= \tilde \delta(X)$ is finite,
\item  If $\alpha, \beta\in \mathcal A,$ $\alpha= (W_\alpha, W'_\alpha),$  $\beta= (W_\beta, W'_\beta)$
then $W_\alpha\subset W_\beta$ implies $W_\alpha\subseteq W'_\beta,$  
\item For any $\alpha$ one has 
 $$\sum _{\beta\in \mathcal A,  W_\beta\subseteq W_\alpha} \rank (W_\beta/ W'_{\beta})= \rank W_\alpha$$
and 
 $$\sum _{\alpha\in \mathcal A} \rank (W_\alpha/ W'_{\alpha})= \rank V.$$
\end{enumerate}
Any collection of splitting $\{s_\alpha: W_\alpha/ W'_\alpha \to W_\alpha \subseteq V, \alpha\in \mathcal A\}$, i.e. right inverses of the canonical projections $p_\alpha: W_\alpha\to W_\alpha/ W'_\alpha,$  assigns to $\tilde \delta$ the 
configuration of subspaces $\hat \delta$ defined by $\hat\delta(x)=s_{\tilde \delta(x)}(W_{\tilde \delta(x)}/ W'_{\tilde\delta(x)}).$ 
If $\kappa= \mathbb R \ \rm {or}\  \mathbb C$ and $V$ is a $\kappa-$Hilbert space  then the orthogonal complements provides canonical splittings  and the associated configuration $\hat \delta$
becomes a configuration of subspaces.  

If $A=\mathbb C[t^{-1},t]$ and $V$ is a free $A-$module of finite rank equipped with a $\mathbb C[t^{-1},t]$-valued inner product then the von-Neumann completion converts $A$ into $L^\infty(\mathbb S^1),$ $V$ into a finite type Hilbert module, hence a Hilbert space, and any configuration of pairs $\tilde \delta,$ by the process von Neumann completion, into a configuration of  Hilbert submodules.  First one converts $\tilde \delta$ into a configuration of pairs of Hilbert submodules and then, using the Hermitian inner product one realizes the quotient of each pair as a closed Hilbert submodule,   
Clearly the space of configurations of Hilbert submodules comes equipped with the natural collision topology as well as the fine collision topology. 
\vskip .2in

\noindent {\bf Novikov homology}

Let $\kappa$ be a field and let $\kappa[t^{-1},t]$  be the $\kappa-$algebra  of Laurent polynomials with coefficients in $\kappa.$ This is a commutative algebra which is an integral domain and a principal ideal domain. For a pair $(X, \xi)$ $\xi \in H^1(X;\mathbb Z),$ $X$ a compact ANR, let $\tilde X$ be the associated infinite cyclic cover and 
let $\tau :\tilde X\to \tilde X$ be  the positive generator of the group of deck transformation isomorphic to $\mathbb Z$ viewed as a homeomorphism of $\tilde X.$ Since $X$ is compact, 
the $\kappa-$vector space $H_k(\tilde X)$ is actually a finitely generated $\kappa[t^{-1},t]-$module whose multiplication by $t$ is given by the linear isomorphism induced by the homeomorphism $\tau.$  
 
 Since $\kappa[t^{-1},t]$ is a principal ideal domain   
 the collection of torsion elements  form a $\kappa[t^{-1},t]-$submodule $V_r(X;\xi):= \ \rm {Torsion} \ (H_r(\tilde X)) =TH_r(\tilde M)$  (usually referred to as {\it monodromy}) which, as 
 a $\kappa-$vector space, is of finite dimension. 
The quotient  module $H_r(\tilde X)/T H_r(\tilde X)$ 
is a finitely generated free $\kappa[t^{-1},t]-$module. In this paper this free $\kappa[t^{-1},t]-$module and its rank are  called the {\it Novikov homology} and the {\it Novikov--Betti number} and are denoted by $H^N_r(X; \xi)$ and $\beta^N_r(X; \xi)$ respectively  
\footnote  {Classically, the Novikov homology is the $\kappa[t^{-1},t]]-$vector space 
$H_r(\tilde X)\otimes_{\kappa[t^{-1},t]} \kappa[t^{-1},t]]$ with $\kappa[t^{-1},t]]$ the field of Laurent power series; clearly 
$\beta^N_r= \dim (H^N_r(\tilde X)\otimes_{\kappa[t^{-1},t]}\kappa[t^{-1},t]])= \rank (H^N_r(\tilde X)).$}. Since $\kappa[t^{-1}, t]$ is a principal ideal domain 
one has $H_r(\tilde X)\simeq H^N_r(X;\xi) \oplus T H_r(\tilde X) .$   

As pointed out above the $\kappa[t^{-1},t]-$module $V_r(X; \xi)= TH_r(\tilde X),$ which is finitely generated,  when regarded as a vector space over $\kappa$ is of finite dimension and the multiplication by $t$ is actually a $\kappa-$linear isomorphism $T.$  In view of the Jordan decomposition theorem it is completely determined up to isomorphism by  
the collection of pairs with multiplicity $$\mathcal J_r(X:\xi):=\{(\lambda, k) \mid \lambda\in \overline \kappa\setminus 0, k\in \mathbb Z_{\geq 1}\}.$$  Here $\overline \kappa$ denotes the algebraic closure of $\kappa.$ 
Recall that any such pair should be interpreted as a $k\times k$ matrix $T(\lambda, k)$ with $\lambda$ on diagonal, $1$ above diagonal and zero anywhere else  and by Jordan decomposition theorem $T$ (when regarded over $\overline \kappa$) is  similar to the direct sum of all these matrices $T(\lambda, k).$

For a fix $u\in \kappa\setminus 0$ one write $\mathcal J(X;\xi)(u)=\{(\lambda,k)\in \mathcal J_r(X;\xi) \mid  \lambda=u\}.$
\vskip .2in

\noindent {\bf $\kappa[t^{-1},t]-$modules}

A $\kappa[t,t^{-1}]-$module $V$ is actually a $\kappa-$vector space $V$ equipped with a $\kappa-$linear isomorphism $T:V\to V$. The multiplication by $t$ and the isomorphism $T$ are related by the formula $tv:= T(v)$. With this observation we define 
$V^\ast$ the $\kappa[t^{-1},t]-$module whose underlying vector space is the dual of $V,$ $Hom (V,\kappa),$ and linear isomorphism $T^\ast$ the dual of $T.$ 
Note that if $V$ is finitely generated $\kappa [t^{-1},t]-$module then $V^\ast$  is not finitely generated in general but only when is a torsion module.

If $\mathbb Z$ acts freely on the set $S,$ $\kappa [S]$ denotes the vector space  of $\kappa-$valued maps with finite support and $\kappa[[S]]$ denotes the vector space of all $\kappa-$valued maps then:
 
 \begin {enumerate}
 \item  Both $\kappa [S]$ and $\kappa [[S]]$ are $\kappa[t^{-1},t]-$torsion free modules, with  $\kappa[t^{-1},t]$-structure  induced by the action of $1\in \mathbb Z$ on $S.$
 \item $\kappa[[S]]$ is isomorphic to $\kappa[S]^\ast$  (as $\kappa[t^{-1},t]-$modules).
\item A torsion free $\kappa[t^{-1},t]$-module is finitely generated iff is isomorphic to $\kappa[S]$ for some free $\mathbb Z-$action on some set $S$ with the quotient set $S/\mathbb Z$  finite. 
\item   
If $V$ is a finitely generated torsion $\kappa [t^{-1},t]-$module then $V^\ast$  is a finitely generated torsion module and is isomorphic to $V$.
\end{enumerate}
\vskip .2in

\section {The configurations, $\delta^f_r, \tilde \delta^f_r, \hat{\hat \delta}^f_r$} \label {S3}  

\subsection {Boxes and the maps }
 
Let $\kappa$ be a fixed field. Consider
 $h: Y\to \mathbb R$  a (continuous) proper  map with $Y$ an ANR,  hence locally compact.  
For $a,b \in \mathbb R$  consider 

\begin{equation}
\begin{aligned}
\mathbb I^h_a(r)= &\img (H_r(h^{-1}(-\infty,a])\to H_r(Y)),\\
\mathbb I_h^b(r)= &\img (H_r(h^{-1}([b,\infty))\to H_r(Y)),\\
\mathbb F^{h}_r(a,b)= &\mathbb I^{h}_a(r)\cap \mathbb I^b_{h}(r) \subseteq H_r(Y).
\end{aligned}
\end{equation}

and denote by 
\begin{equation}
\begin{aligned}
\mathbb I^h_{-\infty}(r)= &\cap _{a\in \mathbb R}I^h_a(r)\\ 
\mathbb I_h^{\infty}(r)=& \cap _{b\in \mathbb R}I_h^b(r)\\
\mathbb F^{h}_r(-\infty,b)= &\mathbb I^{h}_{-\infty}(r)\cap \mathbb I^b_{h}(r) \subseteq H_r(Y)\\
\mathbb F^{h}_r(a,\infty)= &\mathbb I^{h}_a(r)\cap \mathbb I^{\infty}_{h}(r) \subseteq H_r(Y)\\
\mathbb F^{h}_r(-\infty,\infty)= &\mathbb I^{h}_{-\infty}(r)\cap \mathbb I^{\infty}_{h}(r) \subseteq H_r(Y)\\
\end{aligned}
\end{equation}
\begin{proposition}\label {P1} \ 
For $-\infty \leq a' <a, \  b<b'\leq \infty$ one has 
\begin{enumerate}
\item $\mathbb F^{h}_r(a',b')\subseteq \mathbb F^{h}_r(a,b),$     
\item $\mathbb F^{h}_r(a',b')= \mathbb F^{h}_r(a',b)\cap \mathbb F^{h}_r(a,b'),$
\item $\mathbb F^{h}_r(a,b)$  is a finite dimensional vector space. 
\end{enumerate}
\end{proposition}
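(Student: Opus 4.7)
The plan is to reduce parts (1) and (2) to elementary bookkeeping on the images $\mathbb{I}^h_a(r)$ and $\mathbb{I}^b_h(r)$, and to treat part (3) by identifying $\mathbb{F}^h_r(a,b)$, for $b\leq a$ both finite, with the image in $H_r(Y)$ of the homology of the compact set $h^{-1}[b,a]$, via a Mayer--Vietoris argument, together with a standard appeal to ANR theory to get finiteness.

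For (1), the inclusion $h^{-1}(-\infty,a'] \subseteq h^{-1}(-\infty,a]$ for $a'\leq a$ gives $\mathbb{I}^h_{a'}(r) \subseteq \mathbb{I}^h_a(r)$, and symmetrically $\mathbb{I}^{b'}_h(r) \subseteq \mathbb{I}^b_h(r)$ when $b\leq b'$; the infinite cases follow from $\mathbb{I}^h_{-\infty}(r) = \bigcap_a \mathbb{I}^h_a(r)$ and its counterpart at $+\infty$. Intersecting the two inclusions yields $\mathbb{F}^h_r(a',b') \subseteq \mathbb{F}^h_r(a,b)$. For (2), expand both factors:
$$\mathbb{F}^h_r(a',b) \cap \mathbb{F}^h_r(a,b') = \mathbb{I}^h_{a'} \cap \mathbb{I}^b_h \cap \mathbb{I}^h_a \cap \mathbb{I}^{b'}_h,$$
and apply the absorptions $\mathbb{I}^h_{a'} \cap \mathbb{I}^h_a = \mathbb{I}^h_{a'}$ (since $a'<a$) and $\mathbb{I}^b_h \cap \mathbb{I}^{b'}_h = \mathbb{I}^{b'}_h$ (since $b<b'$) to collapse this to $\mathbb{I}^h_{a'} \cap \mathbb{I}^{b'}_h = \mathbb{F}^h_r(a',b')$.

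For (3), the key case is $-\infty < b \leq a < +\infty$. Put $A = h^{-1}(-\infty,a]$ and $B = h^{-1}[b,\infty)$, so $A \cup B = Y$ and, by properness of $h$, $A \cap B = h^{-1}[b,a]$ is compact. The Mayer--Vietoris exact sequence for the cover $\{A,B\}$ of the ANR $Y$ (justified by thickening $A, B$ to open neighborhoods and passing to the direct limit, so as to exploit the ANR structure)
$$H_r(A \cap B) \xrightarrow{(j_A,j_B)_*} H_r(A) \oplus H_r(B) \xrightarrow{(i_A)_*-(i_B)_*} H_r(Y)$$
yields, via a one-line diagram chase, the identification of $\mathbb{F}^h_r(a,b)$ with the image of $H_r(A \cap B)$ in $H_r(Y)$: indeed, if $x = (i_A)_*\alpha = (i_B)_*\beta$, exactness produces $\gamma \in H_r(A \cap B)$ with $\alpha = (j_A)_*\gamma$, $\beta = (j_B)_*\gamma$, whence $x = (i_{A \cap B})_*\gamma$. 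Since $Y$ is a locally compact ANR, the compact subspace $A \cap B$ is contained in a compact ANR neighborhood $K \subseteq Y$, and the map above factors through $H_r(K)$, a finite-dimensional $\kappa$-vector space. Hence $\mathbb{F}^h_r(a,b)$ has finite dimension.

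The remaining cases of (3) follow from part (1). If $a<b$, pick any $c \in (a,b)$ and use $\mathbb{F}^h_r(a,b) \subseteq \mathbb{F}^h_r(c,c)$, finite-dimensional by the above (with $b_0=a_0=c$). When $a=-\infty$ or $b=+\infty$, bound $\mathbb{F}^h_r$ by a finite-endpoint version using $\mathbb{I}^h_{-\infty} \subseteq \mathbb{I}^h_{a_0}$ and $\mathbb{I}^{\infty}_h \subseteq \mathbb{I}^{b_0}_h$ for any chosen finite $a_0, b_0$. The main obstacle is the Mayer--Vietoris identification and the extraction of a compact ANR neighborhood of $A \cap B$: both are standard, but in the generality of an arbitrary (possibly non-triangulable) locally compact ANR they require careful appeal to Hu's ANR theory rather than elementary CW or simplicial methods.
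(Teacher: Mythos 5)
Items 1 and 2 of your proof are routine and agree with the paper's (both follow by unwinding the definitions). For item 3 your Mayer--Vietoris identification of $\mathbb{F}^h_r(a,b)$, for $-\infty<b<a<\infty$, with the image of $H_r\bigl(h^{-1}[b,a]\bigr)$ in $H_r(Y)$ is correct and is also the paper's starting point; do note that excision requires $b<a$ strictly (so that $\mathrm{int}\,A\,\cup\,\mathrm{int}\,B = Y$), which rules out the case $b=a$ you include in your ``key case,'' but this is repaired by item 1 exactly as you say for the other degenerate cases.

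The genuine divergence, and the gap, is in your finiteness step. You assert that the compact set $h^{-1}[b,a]$ has a compact ANR \emph{neighborhood} $K\subseteq Y$, and this single assertion carries the entire weight of item 3, yet you offer no proof and merely characterize it as standard, deferring to Hu. I do not believe this is an obvious or elementary fact for an arbitrary locally compact ANR $Y$: it holds for locally finite simplicial complexes, for topological manifolds, and for Hilbert cube manifolds, but for a general locally compact ANR --- in particular for the infinite cyclic cover $\tilde X$ of a compact ANR, where preimages of intervals under a merely proper map need not be ANRs --- the existence of a compact ANR lying between $h^{-1}[b,a]$ and $Y$ is exactly the kind of regularity that is not granted. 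The paper avoids this issue altogether. For weakly tame $h$ the set $h^{-1}[b,a]$ is \emph{itself} a compact ANR, hence has finite-dimensional homology and the Mayer--Vietoris argument terminates at once; for general proper $h$ the paper dominates $Y$ by a locally compact simplicial complex and replaces $h$, up to proper homotopy, by a weakly tame p.l.\ map on that complex, then transfers finiteness back. If you wish to keep your route you must either genuinely prove the compact-ANR-neighborhood claim (which I suspect will amount to the domination argument in disguise), or re-route through a repairable version: take a retraction $r\colon U\to Y$ of an open subset $U$ of a normed linear space onto $Y$, note that $h^{-1}[b,a]$ does have a compact ANR neighborhood $K$ inside $U$, and observe that the inclusion $h^{-1}[b,a]\hookrightarrow Y$ factors as $h^{-1}[b,a]\hookrightarrow K\hookrightarrow U\xrightarrow{r} Y$, so that $H_r\bigl(h^{-1}[b,a]\bigr)\to H_r(Y)$ factors through the finite-dimensional $H_r(K)$. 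As written, however, your item 3 has a hole where the paper has an argument.
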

\begin{proof}
Items 1. and 2. follows from definitions.
To check item 3. observe that by item 1. it suffices to verify the statement for $a\geq b.$ If $f$ is weakly tame the statement follows from the Mayer-Vietoris long exact sequence in homology in view of the finite dimensionality of $H_r( f^{-1} [b,a]),$  a consequence of the fact that $f^{-1} [b,a]$ is a compact 
ANR. If $f$ is only proper map one proceeds as in the proof of Proposition 3.2 in \cite {B1}.  Precisely one  dominates $X$ by a locally compact simplicial  complex 
$K$ and up to a proper homotopy the map $f$ by a simplicial proper map $g:K\to \mathbb R$ which is  weakly tame.  The result is true for $g$ by a simple Mayer-Vietoris argument and then and  is true for $f.$ 
\end{proof}
\vskip .1in
A  subset $B$ of $\mathbb R^2$ of the form $$B=(a',a]\times [b,b')$$  $-\infty\leq  a'<a, b<b'\leq \infty$  is called a {\it box}. When both $a',b'$ are finite the box is called {\it finite box}.
Denote by $$B(a,b,\epsilon):= (a-\epsilon,a]\times [b, b+\epsilon),$$ $0<\epsilon \leq \infty.$ 

Below we write $$B+c$$  for the box  which is {\it the $(c,c)-$translation along the diagonal of the box $B$}, precisely  $B+c:=(a'+c, a+c]\times [b+c, b'+c)$ and  for $B= (a',a]\times [b, b')$ 
we denote  by $cB\subset \mathbb R^2$ the set 
$$  cB:= B(a,b;\infty)\setminus B.$$ 

For a box $B= (a',a]\times [b,b')$ denote by 
\begin{equation}
\begin{aligned}
{\mathbb F'}^h_r(B):=&\mathbb F^{h}_r(a',b)+ \mathbb F^{h}_r(a,b')\subseteq \mathbb F^{h}_r(a,b)
\subseteq H_r(Y)\\
\mathbb F^h_r(B):=&\mathbb F^{h}_r(a,b)/ {\mathbb F'}^{h}_r(B).
\end{aligned}
\end{equation}

Clearly   if      $\mathbb I^h_{-\infty} (r)= \mathbb I^\infty_h(r) ,$ as it will be the case for $h$ a lift of a continuous angle valued map (cf. Proposition \ref{P36}), 
then for any $a,b\in \mathbb R$ 
\begin{equation*}
{\mathbb F'}^h_r(B(a,b; \infty)) = \mathbb I^h_{-\infty} (r) + \mathbb I_h^{\infty} (r)= \mathbb I^h_{-\infty}(r)= \mathbb I_h^\infty (r).
\end{equation*}

For $-\infty\leq a''<a'< a$ and $ b< b'< b''\leq \infty$ consider  
\begin{equation*}
\begin{aligned}
B'_1:=&(a',a]\times [b,b'),\\
B_1:= &(a'',a]\times [b,b'),\\
B''_1:=&(a'',a']\times [b,b'),
\end{aligned}
\end{equation*}
\begin{equation*}
\begin{aligned}
B'_2:=& (a',a]\times [b',b''),\\
B_2:= &(a'',a]\times [b',b''),\\
B''_2:=&(a'',a']\times [b',b'')
\end{aligned}
\end{equation*}
and 
\begin{equation}
B:=(a'',a]\times [b,b'')
\end{equation}
cf. Figure 1\ below.

\begin{tikzpicture} [scale=1.2]
\draw [<->]  (0,4) -- (0,0) -- (5,0);
\node at (-.2,3.5) {$b''$};
\node at (-0.2,1.5) {$b'$};
\node at (-0.2,1) {$b$};
\node at (1,-0.2) {$a''$};
\node at (3,-0.2) {$a'$};
\node at (4,-0.2) {$a$};
\draw [line width=0.10cm] (1,1) -- (4,1);
\draw [line width=0.10cm] (4,1) -- (4,3.5);
\draw [dashed, ultra thick] (1,1) -- (1,3.5);
\draw [dashed, ultra thick] (1,3.5) -- (4,3.5);
\draw [line width=0.10cm] (1,1.5) -- (4,1.5);
\draw [line width=0.10cm] (3,1) -- (3,3.5);
\node at (2,1.25) {$B''_1$};
\node at (2,2.5) {$B''_2$};
\node at (3.5,1.25) {$B'_1$};
\node at (3.5,2.5) {$B'_2$};
\node at(2.4, -1) {Figure 1};
\end{tikzpicture}
\hfill

One has
\begin{equation*}
\begin{aligned}
B_1= &B'_1 \sqcup B''_1\\
B_2= &B'_2 \sqcup B''_2\\
B\ = &B_1 \sqcup B_2 .\\
\end{aligned}
\end{equation*}
\vskip .1in 

\begin{proposition}\label {P2} \ 

The inclusions $B''_1\subseteq B_1 \supseteq B'_1,$  $B''_2\subseteq B_2 \supseteq B'_2$  and $B_1\subseteq B\supseteq B_2$ induce the short exact sequences 

$$\xymatrix {0\ar[r]&\mathbb F^{h}_r(B''_i)\ar[r]^{i^{B_i}_{B''_i,r}} & \mathbb F^{h}_r(B_i)\ar[r]^{\pi^{B'_i}_{B_i,r}}&\mathbb F^{h}_r(B'_i)\ar[r] &0},  i=1,2$$
$$\xymatrix {0\ar[r]&\mathbb F^{h}_r(B_2)\ar[r]^{i^{B}_{B_2,r}} & \mathbb F^{h}_r(B)\ar[r]^{\pi^{B_1}_{B,r}}&\mathbb F^{h}_r(B_1)\ar[r] &0}.$$ 
\end{proposition}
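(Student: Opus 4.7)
The plan is to verify each of the three short exact sequences by direct inspection, relying on the containments of Proposition \ref{P1}(1) and, crucially, on the intersection identity of Proposition \ref{P1}(2), $\mathbb{F}^h_r(a',b)\cap \mathbb{F}^h_r(a,b')=\mathbb{F}^h_r(a',b')$.

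First I would unpack the terms for the sequence indexed by $B_1''\subseteq B_1\supseteq B_1'$. From the definition one has
$$\mathbb{F}^h_r(B_1'')=\mathbb{F}^h_r(a',b)\big/\bigl(\mathbb{F}^h_r(a'',b)+\mathbb{F}^h_r(a',b')\bigr),$$
$$\mathbb{F}^h_r(B_1)=\mathbb{F}^h_r(a,b)\big/\bigl(\mathbb{F}^h_r(a'',b)+\mathbb{F}^h_r(a,b')\bigr),$$
$$\mathbb{F}^h_r(B_1')=\mathbb{F}^h_r(a,b)\big/\bigl(\mathbb{F}^h_r(a',b)+\mathbb{F}^h_r(a,b')\bigr).$$
The map $i$ is induced by the inclusion of numerators $\mathbb{F}^h_r(a',b)\hookrightarrow \mathbb{F}^h_r(a,b)$; this descends to the quotients because $\mathbb{F}^h_r(a'',b)+\mathbb{F}^h_r(a',b')\subseteq \mathbb{F}^h_r(a'',b)+\mathbb{F}^h_r(a,b')$, using $\mathbb{F}^h_r(a',b')\subseteq \mathbb{F}^h_r(a,b')$. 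The map $\pi$ is the natural projection corresponding to enlarging the denominator of $\mathbb{F}^h_r(B_1)$ to that of $\mathbb{F}^h_r(B_1')$, legitimate since $\mathbb{F}^h_r(a'',b)\subseteq \mathbb{F}^h_r(a',b)$. Surjectivity of $\pi$ and $\pi\circ i=0$ are then immediate.

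The substantive content is injectivity of $i$ and exactness in the middle, both of which reduce to a modular-law argument invoking Proposition \ref{P1}(2). For injectivity, suppose $x\in \mathbb{F}^h_r(a',b)$ admits a decomposition $x=y+z$ with $y\in \mathbb{F}^h_r(a'',b)$ and $z\in \mathbb{F}^h_r(a,b')$; then $z=x-y$ lies in $\mathbb{F}^h_r(a',b)\cap \mathbb{F}^h_r(a,b')=\mathbb{F}^h_r(a',b')$, so $x$ already belongs to $\mathbb{F}^h_r(a'',b)+\mathbb{F}^h_r(a',b')$ and represents zero in $\mathbb{F}^h_r(B_1'')$. For exactness in the middle, if $x\in \mathbb{F}^h_r(a,b)$ satisfies $\pi(x)=0$, write $x=y+z$ with $y\in \mathbb{F}^h_r(a',b)$ and $z\in \mathbb{F}^h_r(a,b')$; then modulo the denominator of $\mathbb{F}^h_r(B_1)$ one has $x\equiv y$, which lies in the image of $i$.

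The sequence for $B_2''\subseteq B_2\supseteq B_2'$ follows by the verbatim argument with $(b,b')$ replaced by $(b',b'')$ throughout. The third sequence $\mathbb{F}^h_r(B_2)\to \mathbb{F}^h_r(B)\to \mathbb{F}^h_r(B_1)$ is the analogue in which one varies the second coordinate instead of the first; here the map $i$ comes from the inclusion $\mathbb{F}^h_r(a,b')\hookrightarrow \mathbb{F}^h_r(a,b)$ and the key intersection identity used for injectivity is $\mathbb{F}^h_r(a'',b)\cap \mathbb{F}^h_r(a,b')=\mathbb{F}^h_r(a'',b')$. I expect the only real obstacle to be careful bookkeeping of the six indices $a''<a'<a$ and $b<b'<b''$ across the three sequences; once the maps are correctly identified, exactness collapses to Proposition \ref{P1}(2) applied at the appropriate corner of the rectangle.
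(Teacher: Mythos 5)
Your proof is correct and follows exactly the route the paper intends: the paper's one‑line proof ("follows from the definition of $\mathbb F^h_r(a,b)$ and Proposition~\ref{P1}") is precisely the definition‑chase plus modular/intersection argument via Proposition~\ref{P1}(2) that you have spelled out. The bookkeeping of the quotient presentations, the descent of the inclusion and projection maps, and the injectivity and middle‑exactness arguments are all right.
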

The proof follows from the definition of $\mathbb F^h_r(a,b)$ and Proposition \ref {P1} above.

\begin{obs} \label{O33}  If $B'$ and $B''$ are the boxes $B'= B_1'\sqcup B_2'$ and $B''= B_1''\sqcup B_2''$ then one has 

$$i^B_{B''_2,r}:= i^B_{B'',r}\cdot i^{B''}_{B''_2,r}= i^B_{B_2,r}\cdot i^{B_2}_{B''_2,r}$$
with $i^B_{B''_2,r}$ injective and 

$$\pi^{B'_1}_{B,r}:= \pi^{B'_1}_{B',r}\cdot \pi^{B'}_{B,r}= \pi^{B'_1}_{B_1,r}\cdot \pi^{B_1}_{B,r}$$
with $\pi^{B'_1}_{B,r}$ surjective. 
\end{obs}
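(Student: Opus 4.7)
The plan is to prove both equalities by writing each $\mathbb{F}^h_r(\cdot)$ explicitly as a subquotient of $H_r(Y)$ and observing that every map appearing in the statement is induced at the level of numerators by a canonical inclusion (for the injections) or by the identity followed by enlargement of the denominator (for the surjections). Once this is done, the two composite maps in each equality reduce to the same induced map between the outermost subquotient and the innermost one, so equality is automatic.

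First I would unwind, for each of the eight boxes $B,\ B',\ B'',\ B_1,\ B_2,\ B'_1,\ B''_1,\ B'_2,\ B''_2$, the explicit formula $\mathbb{F}^h_r(\cdot) = \mathbb{F}^h_r(\alpha,\beta)/({\mathbb{F}'}^h_r(\cdot))$. Inspecting the short exact sequences of Proposition~\ref{P2}, each injection $i^{?}_{?,r}$ is induced by the inclusion of the smaller numerator into the larger numerator (with the denominator enlarged compatibly), and each surjection $\pi^{?}_{?,r}$ is induced by the identity on a common numerator together with an enlargement of the denominator. Well-definedness in every case is immediate from Proposition~\ref{P1}(1), and injectivity of each $i^{?}_{?,r}$ is exactly what Proposition~\ref{P1}(2) (the identity $\mathbb{F}^h_r(a',b') = \mathbb{F}^h_r(a',b) \cap \mathbb{F}^h_r(a,b')$) gives, via the same argument that proves Proposition~\ref{P2}; the same proof applies verbatim to the decompositions $B = B'' \sqcup B'$, $B'' = B''_1 \sqcup B''_2$, $B' = B'_1 \sqcup B'_2$, which are of the same form as those explicitly listed in Proposition~\ref{P2}.

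For the first equality, both composites $i^B_{B'',r} \circ i^{B''}_{B''_2,r}$ and $i^B_{B_2,r} \circ i^{B_2}_{B''_2,r}$ send the class of $x \in \mathbb{F}^h_r(a',b')$ in $\mathbb{F}^h_r(B''_2)$ to the class in $\mathbb{F}^h_r(a,b) / (\mathbb{F}^h_r(a'',b) + \mathbb{F}^h_r(a,b''))$ of the same element $x$, viewed in $\mathbb{F}^h_r(a,b)$ via the inclusions $\mathbb{F}^h_r(a',b') \subseteq \mathbb{F}^h_r(a,b') \subseteq \mathbb{F}^h_r(a,b)$ in one case and via $\mathbb{F}^h_r(a',b') \subseteq \mathbb{F}^h_r(a',b) \subseteq \mathbb{F}^h_r(a,b)$ in the other. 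These are the same vector of $H_r(Y)$, hence the two composites agree. Injectivity of $i^B_{B''_2,r}$ then follows because it is a composition of two injections.

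The second equality is strictly dual: both composites send the class of $x \in \mathbb{F}^h_r(a,b)$ in $\mathbb{F}^h_r(B)$ to the class of the same $x$ in $\mathbb{F}^h_r(a,b) / (\mathbb{F}^h_r(a',b) + \mathbb{F}^h_r(a,b'))$, obtained by enlarging the denominator along either of the two paths in the $2\times 2$ grid, and surjectivity is preserved under composition. The main difficulty is purely bookkeeping: matching up which numerator and denominator apply to each of the eight boxes and checking at the two places where injectivity enters that the intersection identity of Proposition~\ref{P1}(2) precisely absorbs the extra denominator terms. No idea beyond Propositions~\ref{P1} and~\ref{P2} is needed.
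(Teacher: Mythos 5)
Your proof is correct and fills in exactly the verification the paper leaves implicit (the paper states Observation~3.3 without an explicit argument, as an immediate consequence of the definitions and Proposition~3.2). Identifying each injection as induced by an inclusion of numerators and each surjection as induced by an enlargement of denominators, so that both composites in each equality act as the same map on the underlying element of $H_r(Y)$, is precisely the right and complete argument; the bookkeeping and the appeal to Proposition~3.1(2) for well-definedness and injectivity of the intermediate maps are all in order.
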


\vskip .1in 
For $\epsilon'>\epsilon$ 
the inclusion $B(a,b;\epsilon)\subseteq B(a,b; \epsilon')$ for $\epsilon' >\epsilon$  induces the surjective linear map $$\pi^{B(a,b;\epsilon)}_{B(a,b;\epsilon'), r}: \mathbb F^{h}_r(B(a,b; \epsilon'))\to \mathbb F^{h}_r(B(a,b; \epsilon)).$$ 

Define $$\boxed{\hat \delta^{h}_r(a,b):= \varinjlim _{\epsilon\to 0}  \mathbb F^{h}_r((B(a,b; \epsilon))}.$$ In view of Proposition \ref{P1} $\hat \delta^{h}_r(a,b)$ is a finite dimensional vector space.  
 
Define  $$\boxed{\delta^{h}_r(a,b):= \dim \hat{\delta}^{h}_r(a,b)}.$$

Let ${\pi_{B,r}^{(a,b)} : \mathbb F^{h}_r(B)\to \hat\delta^{h}_r(a,b)}$ be given by 
$$\boxed{\pi_{B,r}^{(a,b)}:=\varinjlim _{\epsilon\to 0}  \pi^{B(a,b;\epsilon)}_{B, r}}.$$

\begin{proposition}\label{P3}\

\begin{enumerate}
\item For $a, b\in \mathbb R$ and $\epsilon$ is small enough ${\hat \delta}^{h}_r(a,b)= \mathbb F^{h}_r(B(a,b; \epsilon)).$ 
\item For any  box $B= (a',a]\times[b,b')$ with $-\infty\leq a'<a, b<b'\leq \infty$  the  set $\supp\ \delta^h_r \cap B$ has finite cardinality and one has
$$\sum _{(a,b) \in  B\ \cap\ \supp\ {\hat \delta} _r^h} {\delta}^{h}_r (a,b)= \dim \mathbb F^{h}_r(B).$$ 
\item If $h$ is weakly tame and
${\hat \delta}^{h}_r(a,b)\ne 0$  then both $a,b$ are homological critical values, hence 
\newline $\supp \ \hat \delta^{h}_r = \supp \  {\delta}^{h}_r  \subseteq CRH(h)\times CRH(h) \subseteq 
CR(h)\times CR(h).$ 
\end{enumerate}
\end{proposition}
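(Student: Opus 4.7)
The plan is to handle the three items sequentially, each reducing to the additivity structure from Propositions~\ref{P1} and~\ref{P2}.

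For item 1, the directed system $\{\mathbb{F}^h_r(B(a,b;\epsilon))\}_{\epsilon > 0}$ (with smaller $\epsilon$ later in the partial order) consists of finite-dimensional $\kappa$-vector spaces whose transition maps $\pi^{B(a,b;\epsilon)}_{B(a,b;\epsilon'),r}$ are surjective for $\epsilon < \epsilon'$. Hence $\dim \mathbb{F}^h_r(B(a,b;\epsilon))$ is a non-increasing sequence of non-negative integers as $\epsilon \to 0$, so it stabilizes at some $\epsilon_0 > 0$. Once stable, the surjections are isomorphisms and $\hat{\delta}^h_r(a,b) = \mathbb{F}^h_r(B(a,b;\epsilon))$ for every $\epsilon \le \epsilon_0$.

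For item 2, iterating Proposition~\ref{P2} shows that any subdivision of $B$ by finitely many horizontal and vertical cuts into sub-boxes $\{B_i\}$ satisfies $\dim \mathbb{F}^h_r(B) = \sum_i \dim \mathbb{F}^h_r(B_i)$. Given $N$ distinct points $(a_i,b_i) \in \supp \delta^h_r \cap B$, item 1 allows us to choose pairwise disjoint model boxes $B(a_i,b_i;\epsilon_i) \subset B$ with $\dim \mathbb{F}^h_r(B(a_i,b_i;\epsilon_i)) = \delta^h_r(a_i,b_i) \ge 1$; extending to a finite box partition of $B$ and applying additivity yields $\dim \mathbb{F}^h_r(B) \ge N$, forcing the support to be finite since $\dim \mathbb{F}^h_r(B) < \infty$ by Proposition~\ref{P1}. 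For the sum formula, enumerate $\supp \delta^h_r \cap B = \{(a_i,b_i)\}_{i=1}^{k}$ and build a partition of $B$ containing a model box around each support point plus residual boxes disjoint from the support; additivity reduces the claim to showing $\dim \mathbb{F}^h_r(B') = 0$ for every residual $B'$. This vanishing is proved by contradiction: iterative bisection of $B'$ produces nested sub-boxes of positive dimension converging to a limit point $(a^*,b^*) \in \overline{B'}$, and item 1 at $(a^*,b^*)$ forces $\delta^h_r(a^*,b^*) > 0$, contradicting that $B'$ is disjoint from the support, provided $(a^*,b^*)$ lies in $B'$ and not on its excluded open boundary.

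For item 3, assume $a$ is homologically regular. By definition there is an open neighborhood $U \ni a$ on which $h^{-1}(a) \hookrightarrow h^{-1}(U)$ is a homology equivalence; for $\epsilon > 0$ with $(a-\epsilon, a+\epsilon) \subset U$, a Mayer--Vietoris argument (legitimate because weakly tame maps have ANR level sets and hence ANR preimages of closed intervals) implies the inclusion $h^{-1}(-\infty, a-\epsilon] \hookrightarrow h^{-1}(-\infty, a]$ induces an isomorphism on $H_r$. Thus $\mathbb{I}^h_{a-\epsilon}(r) = \mathbb{I}^h_a(r)$, whence $\mathbb{F}^h_r(a-\epsilon,b) = \mathbb{F}^h_r(a,b)$, so ${\mathbb{F}'}^h_r(B(a,b;\epsilon)) \supseteq \mathbb{F}^h_r(a,b)$ and $\mathbb{F}^h_r(B(a,b;\epsilon)) = 0$; therefore $\hat{\delta}^h_r(a,b) = 0$. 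The symmetric argument handles $b$ regular, and the contrapositive gives the stated inclusion of supports.

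The main obstacle is the residual-box vanishing in item 2: the half-open nature of $B'$ permits the limiting point $(a^*,b^*)$ of the bisection to sit on the excluded boundary of $B'$, where the model box $B(a^*,b^*;\epsilon)$ is disjoint from (rather than eventually containing) the nested sub-boxes, so item 1 does not apply directly. Circumventing this requires either a careful perturbation of the initial partition (so that support points of $\overline{B}$ lying outside $B$ are separated from the residual boxes) or an alternative argument that guarantees the bisection limit is interior to $B'$.
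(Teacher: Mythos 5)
Items~1 and~3 match the paper's arguments: finite dimensionality forces stabilization of $\dim \mathbb F^h_r(B(a,b;\epsilon))$ for item~1, and homological regularity of $a$ or $b$ forces that stable value to vanish for item~3.

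For item~2 the caveat you flag at the end is a genuine gap, not just a technicality. Since a box has the form $(\alpha',\alpha]\times[\beta,\beta')$, your nested-bisection sequence of positive-dimensional sub-boxes may shrink onto the \emph{excluded} boundary of $B'$: the additivity from Proposition~\ref{P2} only tells you that at least one half has positive dimension, with no control over which, and if it is always the left (or top) half the limit point $(a^*,b^*)$ lands in $\overline{B'}\setminus B'$, so item~1 produces no contradiction. The paper sidesteps this entirely by discretizing \emph{before} partitioning: finite dimensionality of $\mathbb F^h_r(a,b)$ (Proposition~\ref{P1}(3)) forces the monotone step functions $\alpha\mapsto\dim(\mathbb I^h_\alpha(r)/\mathbb I^h_{a'}(r))$ and $\beta\mapsto\dim(\mathbb I^\beta_h(r)/\mathbb I^{b'}_h(r))$ to change value at only finitely many places in $[a',a]$ and $[b,b']$ respectively. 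Using these finitely many change points (together with $a',a,b,b'$) as the subdivision grid, every sub-box $B_{i,j}=(\alpha_{i-1},\alpha_i]\times[\beta_j,\beta_{j+1})$ has the filtration constant on its interior, hence coincides with a small model box $B(\alpha_i,\beta_j;\epsilon)$ for $\epsilon$ small; item~1 then gives $\dim\mathbb F^h_r(B_{i,j})=\delta^h_r(\alpha_i,\beta_j)$, which is $0$ whenever $(\alpha_i,\beta_j)$ fails to be a support point, and additivity from Proposition~\ref{P2} sums these to $\dim\mathbb F^h_r(B)$. This finite-change-point observation is precisely the ingredient your argument is missing, and once you insert it the bisection scheme can be dropped altogether.
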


\begin{proof} 
In view of finite dimensionality of $\dim \mathbb F_r^h(a,b)$ stated in Proposition \ref{P1} item 3. , for any $a< b$ there are at most finitely many values of $\alpha,$ $\alpha_1 < \alpha_2 <\cdots \alpha_k, \ a\leq \alpha_1\leq b$ s.t. $\dim \mathbb I^h_\alpha / \mathbb I^h_a$ or $\mathbb I^\alpha_h/ \mathbb I^b_h$ changes.  This implies also $\supp \delta^h_r \cap B$ has finite cardinality (hence the first part of item 2.).  The finite dimensionality  of $\dim \mathbb F_r^h(a,b)$ implies that $\dim \mathbb F^h(B(a,b;\epsilon))$ stabilizes when $\epsilon \to 0$ which implies item 1. 

To conclude  item 2. entirely, consider $a'=\alpha_0 <\alpha_1 <\cdots \alpha_r= a$ and $ b=\beta_1\cdots \beta_{s+1}= b'$ such that any box $B_{i,j}= (\alpha_{i-1}, \alpha_i]\times [\beta_j, \beta_{j+1})$ contains at most one point in $\supp \delta^h_r.$ Apply inductively Proposition \ref{P2} to derive that $$\sum _{\begin{aligned}1\leq i\leq r\\ 
1\leq j\leq s\end{aligned}} \dim \mathbb F^h(B_{i,j}) = \mathbb F^h_r(B).$$ 

 If $h$ is weakly tame, unless both $a,b$ are homologically critical values,  
$\mathbb F^h_r(B(a,b; \epsilon))$ stabilizes to zero which implies item 3.
Indeed in view of the definition one has $\dim \mathbb F_r(B(a,b; \epsilon))= \dim \mathbb F_r(a,b)+\dim \mathbb F_r(a-\epsilon,b-\epsilon)- \dim \mathbb F_r(a-\epsilon,b)-\dim \mathbb F_r(a, b-\epsilon).$ If either $a$ or $b$ are regular values and $\epsilon$ is small enough  the right side of the equality vanishes. 

\end{proof} 
\vskip .1in

In general $\delta^h_r$ and $\hat \delta^h_r$ are not configurations since their support, although discrete, might not be finite. 

\vskip .2in  

Consider the canonical surjective maps  

\begin{equation}
\begin{aligned}
\pi_r(a,b): \mathbb F^h_r(a,b)\to \hat \delta^{h}_r(a,b),\\
\pi^B_r(a,b): \mathbb F^h_r(a,b)\to \mathbb F^h_r(B), 
\end{aligned}
\end{equation}
Clearly $\pi_r(a,b) =  \pi ^{(a,b)}_{B,r} \cdot \pi^B_r(a,b)$
\vskip .1in 
One calls {\it splitting} any linear map 
\begin{equation}
\begin{aligned}
i_r(a,b): \hat\delta_r(a,b)\to \mathbb F^h_r(a,b), \ \rm {or}\\
i^B_r(a,b): \hat \delta_r(a,b)\to \mathbb F^h_r(B) 
\end{aligned}
\end{equation}
which satisfy 
\begin{equation*}
\begin{aligned}
\pi_r(a,b)\cdot i_r(a,b)=id, \  \rm {or}\\ 
\pi^{(a,b)}_{B,r} \cdot i^B_r(a,b)= id.
\end{aligned}
\end{equation*}
We continue to write $i_r(a,b)$ for its composition with the inclusion $\mathbb F^{h}_r(a,b)\subseteq H_r(Y).$ 
A splitting $i_r(a,b)$ provides the splitting $i^B_r(a,b)$ defined by $i^B_r(a,b)= \pi^B_r(a,b)\cdot i_r(a,b).$
\vskip .1in
For $(a,b)\in B'$ with $B'=(a',a_+]\times [b_-, b'')$ and $\infty\leq a' < a\leq a_+, \  b_-\leq b< b'\leq \infty,$ let  
$$ i^{B'}_r(a,b): \hat \delta ^f_r(a,b) \to \mathbb F^{h}_r(B')$$ be the composition 
$$\xymatrixcolsep{4pc}\xymatrix{ \hat \delta ^{h}_r(a,b)\ar[r]^{i_r(a,b)}&\mathbb F_r^{h}(a,b)\ar[r]^{\subseteq} &\mathbb F^{h}_r(a_+,b_-)\ar [r]^{\pi_r^{B'}(a',b')}&\mathbb F^{h}_r(B')}.$$ 

Both linear maps $i_r(a,b)$ and $i_r^{B'}(a,b) $ are injective. The first is injective because $\pi_r(a,b)\cdot i_r(a,b)= Id.$ The second is injective because of   
the commutativity of the diagram 
$$\xymatrix{
\mathbb F_r^{h}(a,b)\ar[d]^{\subseteq}\ar[r]^{\pi_r^{B}(a,b)} &\mathbb F^h_r(B)\ar[d]^{i^{B'}_{B,r}}\\
\mathbb F^{h}_r(a_+,b_-)\ar [r]^{\pi_r^{B'}(a_+,b_-)}&\mathbb F^{h}_r(B').}$$
which implies $i^{B'}_r(a,b)= i_{B,r}^{B'}\cdot i^B_r(a,b)$ with $i^{B'}_{B,r}(a,b)$injective by Observation \ref{O33} and $i^B_r(a,b)$ injective being a splitting.
\vskip .1in
One summarizes the above maps by the  diagram  below (\ref{E2}). 

\begin{equation}\label {E2}
\xymatrixcolsep{5pc}\xymatrix{ H_r( Y) 
& {\ \mathbb F^h_r(a,b)}\ar@{->>}[dd]_{\pi^B_r(a,b)}
\ar@{>->}[l]_\supseteq \ar@{->>}[r]_{\pi_r(a,b)}  
& {\hat \delta}^{h}_r(a,b)\ar@{-->}@/_1pc/[l] _{i_r(a,b)}\ar@{-->}[ddl]^{i ^B_r(a,b)} 
\ar@{>-->}[dd]^{i ^{B'}_r(a,b)}
\\
&&\\
&\mathbb F^h_r(B)\ar@{->>}@
/^1pc/[uur]^{\pi_{B,r}^{(a,b)}}\ar@{>-->}[r]^{i ^{B'}_{B,r}}
 &\mathbb F^h_r(B').}
\end{equation}
where the subdiagrams involving only arrows  $\rightarrow$ or only arrows $\dashrightarrow,$ are commutative and $i^B_r(a,b)= \pi^B_r(a,b)\cdot i_r(a,b).$

To simplify the writing, until the end of this section  we will write $\oplus _{(a,b)}$ resp. $\oplus _{(a,b)\in B}$ instead of $\oplus _{(a,b)\in \supp \ \delta^{h}_r}$ resp. 
$\oplus _{(a,b)\in \supp \ \delta^h_r \cap B}$  . 
\vskip .1in 
Choose a collection of splittings  $\mathcal S= \mathcal S_r=  \{ i_r(a,b)\mid (a,b)\in \supp\  \delta^{h}_r\}.$ 

Define $$\boxed{^\mathcal S \hat\delta^h_r(a,b):= i_r(a,b)(\hat \delta^h_r(a,b)) \subseteq\mathbb F_r^h(a,b)\subseteq H_r(Y)}$$ 
and consider  the map 
$$\boxed{ ^\mathcal S I_r= \bigoplus_{(a,b)} i_r(a,b): \bigoplus_{(a,b)} \hat\delta^{h}_r(a,b) \to H_r(Y),}$$  and for a box $B$   
the map  
$$\boxed{ ^\mathcal S I^B_r= \bigoplus_{(a,b)\in B} i^B_r(a,b): \bigoplus_{(a,b)\in  B} \hat\delta^{h}_r(a,b) \to \mathbb F^h_r(B)}$$  
with $i_r(a,b)$ and $i^B_r(a,b)$ provided by the splittings in the collection $\mathcal S.$

Denote by $$\boxed{\pi_r: H_r(Y)\to H_r(Y)/ (\mathbb I^h_{-\infty} +\mathbb I^{\infty}_h
)}$$ the canonical projection. 

\begin{proposition} \label{P34}\ 

Suppose $h$ is a weakly tame map.
\begin{enumerate}
\item The linear maps $^\mathcal S I^B_r$ and  $\pi_r\cdot ^\mathcal SI_r$ are isomorphisms. Therefore $^\mathcal SI_r$ is injective  and

 $^\mathcal S I_r(\oplus_{(\alpha, \beta)\in \mathbb R^2} \hat \delta^h_r(\alpha, \beta)) \cap (\mathbb I^h_{-\infty}(r) + \mathbb I_h^{\infty}(r))= 0$
\item 
\begin{enumerate}
\item $\sum_{\tiny \begin{cases} \alpha\leq a\\ 
\beta\geq b
\end{cases}} {^{\mathcal S}\hat\delta^h_r(\alpha, \beta)} + \mathbb I^h_{-\infty}(r) + \mathbb I_h^{\infty}(r)= \mathbb F^h_r(a,b)$
\item $\sum_{(\alpha, \beta)\in cB} {^{\mathcal S}\hat\delta^h_r(\alpha, \beta)} + \mathbb I^h_{-\infty}(r) + \mathbb I_h^{\infty}(r)= {\mathbb F'}^h_r(B)$
\item $\sum_{(\alpha, \beta)\in \mathbb R^2} {^{\mathcal S}\hat\delta^h_r(\alpha, \beta)} + \mathbb I^h_{-\infty}(r) + \mathbb I_h^{\infty}(r)= H_r(Y)$
\item $\sum _{(\alpha, \beta)\in B}  {^\mathcal S \hat\delta^h_r(\alpha, \beta)}  = {\mathbb F}^h_r(B).$
\end{enumerate}

\end{enumerate}
\end{proposition}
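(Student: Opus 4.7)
The plan is to establish item 1 in two steps—first, that ${}^\mathcal{S}I^B_r$ is an isomorphism for every finite box $B$, proved by induction on $k = |\supp\,\delta^h_r \cap B|$ (finite by Proposition \ref{P3}(2)); second, that $\pi_r \cdot {}^\mathcal{S}I_r$ is an isomorphism, bootstrapped from step one by a stabilization argument exploiting the finite dimensionality of $\mathbb{F}^h_r(a,b)$ from Proposition \ref{P1}(3). Item 2 will then follow by restriction.

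For the iso of ${}^\mathcal{S}I^B_r$, I handle the cases $k = 0, 1$ directly via Proposition \ref{P3}: for $k = 0$ both sides vanish; for $k = 1$ with support point $(a_0, b_0)$, the identity $\pi^{(a_0,b_0)}_{B,r} \cdot i^B_r(a_0,b_0) = \mathrm{id}$ inherent in the splitting forces injectivity of $i^B_r(a_0,b_0)$, and the dimension equality $\dim \mathbb{F}^h_r(B) = \delta^h_r(a_0,b_0)$ promotes it to a bijection. For $k \geq 2$, discreteness of the support permits splitting $B = B_1 \sqcup B_2$ along a horizontal or vertical cut so each $B_i$ contains strictly fewer support points. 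Pairing the obvious short exact sequence $0 \to \bigoplus_{B_2} \hat{\delta}^h_r \to \bigoplus_B \hat{\delta}^h_r \to \bigoplus_{B_1} \hat{\delta}^h_r \to 0$ with the sequence of Proposition \ref{P2} produces a commutative ladder—commutativity being verified by unwinding Observation \ref{O33} and $i^B_r(a,b) = \pi^B_r(a,b) \cdot i_r(a,b)$—and the Five Lemma completes the induction.

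For $\pi_r \cdot {}^\mathcal{S}I_r$ I argue injectivity and surjectivity separately. Injectivity: given a finite sum $y = \sum_k i_r(a_k,b_k)(v_k) = u + w$ with $u \in \mathbb{I}^h_{-\infty}(r)$ and $w \in \mathbb{I}^\infty_h(r)$, choose a finite box $B = (a',a] \times [b,b')$ enclosing all $(a_k,b_k)$; from $u \in \mathbb{I}^h_{-\infty} \subseteq \mathbb{I}^h_{a'}$ and $u = y - w \in \mathbb{I}^b_h$ one gets $u \in \mathbb{F}^h_r(a',b) \subseteq {\mathbb{F}'}^h_r(B)$, and symmetrically $w \in \mathbb{F}^h_r(a,b') \subseteq {\mathbb{F}'}^h_r(B)$, so the image of $y$ in $\mathbb{F}^h_r(B)$ vanishes and the already established iso ${}^\mathcal{S}I^B_r$ forces all $v_k = 0$. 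Surjectivity: given $y \in H_r(Y)$, properness of $h$ together with compact-support representability of homology classes yields $a_0, b_0$ with $y \in \mathbb{F}^h_r(a_0,b_0)$; the decreasing chains $\{\mathbb{F}^h_r(-m,b_0)\}_m$ and $\{\mathbb{F}^h_r(a_0,m)\}_m$ sit inside the finite-dimensional $\mathbb{F}^h_r(a_0,b_0)$ (for $m > |a_0|$) and therefore stabilize at some $M$ to $\mathbb{I}^h_{-\infty} \cap \mathbb{I}^{b_0}_h$ and $\mathbb{I}^h_{a_0} \cap \mathbb{I}^\infty_h$ respectively, both contained in $\mathbb{I}^h_{-\infty} + \mathbb{I}^\infty_h$; the iso ${}^\mathcal{S}I^{B_M}_r$ for $B_M = (-M,a_0] \times [b_0,M)$ then produces $v$ with $y - {}^\mathcal{S}I_r(v) \in {\mathbb{F}'}^h_r(B_M) \subseteq \mathbb{I}^h_{-\infty} + \mathbb{I}^\infty_h$.

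The four equalities in item 2 follow by restriction: (2a) is the stabilization applied inside $\mathbb{F}^h_r(a,b)$ via truncation boxes $B_M = (-M,a] \times [b,M)$; (2b) is (2a) applied to each summand of ${\mathbb{F}'}^h_r(B) = \mathbb{F}^h_r(a',b) + \mathbb{F}^h_r(a,b')$; (2c) is the surjectivity of $\pi_r \cdot {}^\mathcal{S}I_r$ just proved; and (2d) is the surjective content of ${}^\mathcal{S}I^B_r$. The principal difficulties I foresee are checking commutativity in the inductive ladder cleanly—tracking the distinct projections $\pi^B_r$ and $\pi^{(a,b)}_{B,r}$ through Observation \ref{O33} without collapsing notations—and justifying the stabilization rigorously, i.e.\ that the decreasing $m$-indexed chain inside the finite-dimensional ambient $\mathbb{F}^h_r(a_0,b_0)$ has intersection genuinely equal to $\mathbb{I}^h_{-\infty} \cap \mathbb{I}^{b_0}_h$ rather than something strictly larger.
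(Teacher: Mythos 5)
Your proof is correct and follows the paper's own strategy closely. Both arguments establish that ${}^{\mathcal S}I^B_r$ is an isomorphism for every finite box by induction on the number of support points, splitting $B$ into sub-boxes and feeding the exact sequences of Proposition~\ref{P2} into a commutative ladder, and both reduce the global statement to the finite-box case by exploiting the finite dimensionality of $\mathbb F^h_r(a,b)$ from Proposition~\ref{P1}.

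The one place where you genuinely diverge is in how you finish. The paper passes to a projective limit over the boxes $B(a,b;R)$ as $R\to\infty$ (legitimate because the intersection of $\supp\,\delta^h_r$ with $B(a,b;R)$ stabilizes) and then to a direct limit over $B(-L,L;\infty)$ as $L\to\infty$, concluding that $\pi_r\cdot{}^{\mathcal S}I_r = {}^{\mathcal S}I_r^{\mathbb R^2}$ is an isomorphism. You instead verify injectivity and surjectivity of $\pi_r\cdot{}^{\mathcal S}I_r$ directly: injectivity by enclosing the finitely many support points of a kernel element in a finite box $B$, noticing that the two components $u\in\mathbb I^h_{-\infty}(r)$ and $w\in\mathbb I^\infty_h(r)$ both land in ${\mathbb F'}^h_r(B)$, and invoking injectivity of ${}^{\mathcal S}I^B_r$; surjectivity by stabilizing the decreasing chains $\mathbb F^h_r(-m,b_0)$ and $\mathbb F^h_r(a_0,m)$ inside the finite-dimensional $\mathbb F^h_r(a_0,b_0)$ and then applying surjectivity of ${}^{\mathcal S}I^{B_M}_r$. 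The two finishes are the same stabilization phenomenon packaged differently; yours avoids the limit formalism and is slightly more self-contained, whereas the paper's version is more compact once one is comfortable with $\varprojlim$ and $\varinjlim$ of finite-dimensional systems.

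The worry you flag at the end about the stabilization is unfounded: since $\mathbb I^h_{-\infty}(r)=\bigcap_a\mathbb I^h_a(r)$ with the $\mathbb I^h_a(r)$ nested, one has $\bigcap_m\mathbb F^h_r(-m,b_0)=\bigl(\bigcap_m\mathbb I^h_{-m}(r)\bigr)\cap\mathbb I^{b_0}_h(r)=\mathbb I^h_{-\infty}(r)\cap\mathbb I^{b_0}_h(r)$ exactly, and a decreasing chain of subspaces of a finite-dimensional space necessarily stabilizes at its intersection. The commutativity concern in the ladder is likewise routine once the distinction between $\pi^B_r(a,b)$ and $\pi^{(a,b)}_{B,r}$ is kept straight, exactly as in Observation~\ref{O33} and diagram~(\ref{E2}).
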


\begin{proof} \

Item 1.:  
To shorten the notation introduce the vector spaces
$\hat {\mathbb F}^{h}_r(B):= \oplus _{(a,b)\in B}  \hat \delta^{h}_r(a,b)$  and
$\hat {\mathbb F}^{h}_r:= \oplus _{(a,b)}  \hat \delta^{h}_r(a,b)$  and for the collection of splittings $\mathcal S$ 
one regards $^\mathcal SI^B_r$ and $^\mathcal SI_r$ as maps 
$$^\mathcal SI^B_r: \hat {\mathbb F}^{h}_r(B) \to  \mathbb F^{h}_r(B)$$
$$^\mathcal SI_r: \hat {\mathbb F}^{h}_r \to  H_r(Y). $$

For $B= B_1 \sqcup B_2$  with $B_1=B_1'',  B_2= B_1'$ or $B_1=B'', B_2= B'$  as in  Figure 1, 
one has the commutative  diagram 
$$\xymatrix{ \mathbb F^h_r(B_1) \ar[r] & \mathbb F^h_r(B) \ar[r] &\mathbb F^h_r(B_2)\\
\hat{\mathbb F}^h_r(B_1) \ar[u]_{^\mathcal S I^{B_1}_r}\ar[r]&\hat{\mathbb F}^h_r(B)\ar[u] ^{^\mathcal S I^{B}_r}\ar[r]&\hat{\mathbb F}^h_r(B_2) \ar[u]_{^\mathcal S I^{B_2}_r}.} 
$$
First one checks the statement (1.) for  boxes $B$ with $\supp \ \delta^{h}_r\cap B$ consisting of only one element. This is indeed  the case by Proposition \ref{P3} item  (3) for a box $B(a,b;\epsilon)$ with $\epsilon$ small enough. 

Manipulation with this diagram as in \cite {B1},  namely a decomposition of $B$ as a disjoint union of smaller boxes and successive applications of Proposition \ref {P3} permits to establish inductively the result for any finite box $B$. 
The general case and the isomorphism of $\pi_r\cdot ^\mathcal SI_r$ follows from  the case of $B$ a finite box by passing to projective limit as follows. 

Observe that because $\mathbb F^h_r(a,b)$ is finite dimensional, by Proposition \ref{P1} item 3. the cardinality of the set 
 $\supp\ \delta^{h}_r\cap B(a,b;R)$ remains constant when $R$ is large enough.

 Consider :  
 $\hat{\mathbb F}^h_r(B(a,b;\infty)):= \bigoplus_{(a,b)\in (B (a,b:\infty))} \  \hat \delta ^{h}_r(a,b).$\ \   
Since the set  "$\supp \delta^{h}_r\cap B (a,b;R)$ is constant when $R$ is large"  one has $\hat{\mathbb F}^h_r(B (a,b;\infty))= \varprojlim_{R\to \infty} \hat {\mathbb F}^h_r(B(a,b;R)).$

Consider
 $\mathbb F^h_r(B(a,b;\infty)): = {\mathbb F^h_r(a,b)}/( \mathbb I^h _{-\infty}(r)\cap \mathbb I^{b}_h(r) + \mathbb I^h_{a}(r)\cap \mathbb I^\infty_h(r)) .$ For the same reason  
 $\mathbb F^h_r(B(a,b; \infty))= \varprojlim_{R\to \infty} \mathbb F^h_r(B(a,b;R)).$ 
 
Since  $^\mathcal S I^{B(a,b;R)}_r$ is an isomorphism for any $R,$ $^\mathcal S I^{B(a,b;R)}_r$ stabilizes in $R$ 
and  $^\mathcal S I^{B(a,b;\infty)}: = \varprojlim _{R\to \infty} ^\mathcal SI^{B(a,b;R)}_r$  one has  $^\mathcal S I^{B(a,b;\infty)}$
is an isomorphism.

Note that $\mathbb R^2= \cup_L B(-L,L;\infty)$ and $^\mathcal S I_r ^{\mathbb R^2}= \varinjlim_{L\to -\infty} \  ^\mathcal S I_r^{B(-L,L;\infty)}.$ 
 Since  $^\mathcal S I_r^{B(-L,L;\infty)}$ is an isomorphism for any $L$ so is  $^\mathcal S I_r ^{\mathbb R^2}$
which is exactly  $\pi_r \cdot (^\mathcal S I_r).$

Item 2.: Proposition \ref{P3} and item 1. imply item 2.

\end{proof}
\vskip .1in 
An immediate consequence of  Proposition \ref {P34} is the following corollary. 

\begin {corollary} \label {C35}\
For a discrete collection of points $(a_i, b_i)\in \mathbb R^2, i\in \mathcal A$ 
\begin{enumerate}
\item $\cap_{i\in \mathcal A} \mathbb F^h_r(a_i, b_i) = \oplus _{\{(\alpha, \beta)\in \cap_{i\in \mathcal A} B(a_i, b_i; \infty)\}}      {^\mathcal S \hat \delta^h_r(\alpha, \beta))} +\mathbb I^h_{-\infty}(r) + \mathbb I_h^{\infty}(r),$
\item $\cup_{i\in \mathcal A} \mathbb F^h_r(a_i, b_i) = \oplus _{\{(\alpha, \beta)\in \cup_{i\in \mathcal A} B(a_i, b_i; \infty)\}}       {^\mathcal S\hat \delta^h_r(\alpha, \beta))} +\mathbb I^h_{-\infty}(r) + \mathbb I_h^{\infty}(r).$
\end{enumerate}
\end{corollary}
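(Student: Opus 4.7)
The plan is to recast both identities as purely combinatorial statements about internal direct-sum decompositions of $H_r(Y)$, using the structural information provided by Proposition \ref{P34}. Write $U:=\mathbb I^h_{-\infty}(r)+\mathbb I^\infty_h(r)$ and, for $(\alpha,\beta)\in \supp\,\delta^h_r$, put $V_{(\alpha,\beta)}:={}^{\mathcal S}\hat\delta^h_r(\alpha,\beta)$. Proposition \ref{P34}(1) asserts that ${}^{\mathcal S}I_r$ is injective with image meeting $U$ trivially, while item 2(c) of the same proposition states that the sum of all $V_{(\alpha,\beta)}$ together with $U$ fills $H_r(Y)$. These combine into the internal direct-sum decomposition
\[ H_r(Y) \;=\; U \;\oplus\; \bigoplus_{(\alpha,\beta)\in\supp\,\delta^h_r} V_{(\alpha,\beta)}. \]

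The second step is to show that this global decomposition restricts cleanly to every $\mathbb F^h_r(a,b)$, namely
\[ \mathbb F^h_r(a,b) \;=\; U \;\oplus\; \bigoplus_{(\alpha,\beta)\in B(a,b;\infty)\cap\supp\,\delta^h_r} V_{(\alpha,\beta)}. \]
The inclusion $\supseteq$ follows from monotonicity: each $V_{(\alpha,\beta)}$ with $(\alpha,\beta)\in B(a,b;\infty)$ lies in $\mathbb F^h_r(\alpha,\beta)\subseteq \mathbb F^h_r(a,b)$ by Proposition \ref{P1}(1), while $U\subseteq \mathbb F^h_r(a,b)$ is forced by the equality in Proposition \ref{P34}(2a). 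The reverse inclusion is precisely Proposition \ref{P34}(2a), and directness is automatic from the global decomposition. Consequently each $\mathbb F^h_r(a_i,b_i)$ is a sub-direct-sum of the ambient decomposition, indexed by $S_i:=B(a_i,b_i;\infty)\cap\supp\,\delta^h_r$ with the summand $U$ always present.

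At this point both claims reduce to the elementary fact that in an internal direct sum $V=U\oplus\bigoplus_{j\in J}V_j$, sub-direct-sums $W_i=U\oplus\bigoplus_{j\in S_i}V_j$ satisfy $\bigcap_i W_i = U\oplus\bigoplus_{j\in \cap_i S_i}V_j$ and $\sum_i W_i = U\oplus\bigoplus_{j\in\cup_i S_i}V_j$. Applying this with $J=\supp\,\delta^h_r$ and the $S_i$ above, and using the identity $\bigcap_i S_i=\bigl(\bigcap_i B(a_i,b_i;\infty)\bigr)\cap\supp\,\delta^h_r$ (and analogously for unions, which also explains why the $\cup$ on the left-hand side of item 2 must be read as a sum of subspaces), produces exactly the two stated formulas. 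The discreteness assumption on $\{(a_i,b_i)\}$ is essentially cosmetic; the effective local finiteness is already provided by discreteness of $\supp\,\delta^h_r$ (Proposition \ref{P3}(2)). I expect the main obstacle to be the careful bookkeeping behind the restricted decomposition of $\mathbb F^h_r(a,b)$, and in particular the verification that the direct-sum property survives the restriction, but once the global decomposition of $H_r(Y)$ is in place this follows automatically.
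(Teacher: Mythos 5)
Your proof is correct and is precisely the spelling-out of what the paper leaves implicit: the paper states Corollary \ref{C35} as ``an immediate consequence of Proposition \ref{P34}'' with no further argument, and your write-up makes that deduction explicit by organizing P34 items 1 and 2(c) into the global internal decomposition $H_r(Y)=U\oplus\bigoplus V_{(\alpha,\beta)}$, restricting it via P34(2a) and Proposition \ref{P1}(1), and then invoking the standard lattice behaviour of sub-direct-sums under intersection and sum. Your observation that the $\cup$ on the left of item 2 must be read as a sum of subspaces is the right reading and worth noting; the rest follows exactly as you say.
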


\vskip .2in
\subsection {Definition and properties of $\delta^f_r$ and $\hat \delta ^f_r.$} 

Let $f:X\to \mathbb S^1$ be a continuous map, $X$ compact ANR,   $\tilde f:\tilde X\to \mathbb R$ a lift of $f,$ and $\kappa$ a fixed field. 
We apply 
the previous considerations to $\tilde f: \tilde X\to \mathbb R.$  In this case we have the deck transformation $\tau: \tilde X\to \tilde X$ which induces the isomorphism $t_r: H_r(\tilde X)\to H_r(\tilde X)$ 
and therefore a structure of $\kappa[t^{-1},t]-$module on this $\kappa-$vector space. 
Recall that for a box $B=(a',a]\times [b,b')$  one denotes by $B+c$ the box 
$B+c:= (a'+c, a+c]\times [b+c, b'+c).$

\begin{obs} \label {O35}\

\begin{enumerate}
\item The isomorphism $t_r$ satisfies $t_r(\mathbb F^{\tilde f}_r(a,b))= \mathbb F^{\tilde f}_r(a+2\pi,b +2\pi)$ and $t_r^{-1}(\mathbb F^{h}_r(a,b))= \mathbb F^{h}_r(a-2\pi,b -2\pi).$ 
\item For any box $B= (a',a]\times [b,b')$ consider the box $B+2\pi.$
The isomorphism $t_r$ induces 
the isomorphisms $t_r(B): \mathbb F_r^{\tilde f}(B) \to \mathbb F_r^{\tilde f}(B+2\pi)$ and then $\hat t_r(a,b): \hat \delta^{\tilde f}_r(a,b) \to \hat \delta^{\tilde f}_r (a+2\pi , b+2\pi).$ 
\item $\mathbb I^{\tilde f}_{-\infty}(r)$ 
and $\mathbb I^\infty_{\tilde f}(r)$ are invariant w.r. to $t_r$ , 
hence $\kappa[t^{-1}, t]-$submodules, therefore 
$H_r(\tilde X)/ (\mathbb I ^{\tilde f}_{\infty}(r)+ \mathbb I_{\tilde f}^\infty (r)$ is a $\kappa [t^{-1}, t]-$ module. 
\end{enumerate}
\end{obs}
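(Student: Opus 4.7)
The plan is to derive all three items from the single defining property of a lift, namely $\tilde f\circ\tau = \tilde f + 2\pi$, where $\tau$ is the positive generator of the deck transformation group. This yields the set-theoretic equalities $\tau(\tilde f^{-1}(-\infty,a]) = \tilde f^{-1}(-\infty,a+2\pi]$ and $\tau(\tilde f^{-1}[b,\infty)) = \tilde f^{-1}[b+2\pi,\infty)$, and analogous identities for $\tau^{-1}$ with $-2\pi$ in place of $+2\pi$. Applying the homology functor to the commutative square formed by $\tau$ and the inclusions of sub- and super-level sets into $\tilde X$, and using naturality of the image of a homomorphism, produces $t_r(\mathbb I^{\tilde f}_a(r)) = \mathbb I^{\tilde f}_{a+2\pi}(r)$ and $t_r(\mathbb I^b_{\tilde f}(r)) = \mathbb I^{b+2\pi}_{\tilde f}(r)$. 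Intersecting these two identities gives the first equality of item~(1); the second follows by the same argument applied to $\tau^{-1}$.

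For item~(2), since $t_r$ is a linear isomorphism it commutes with sums of subspaces; together with item~(1) this implies $t_r({\mathbb F'}^{\tilde f}_r(B)) = {\mathbb F'}^{\tilde f}_r(B+2\pi)$, so $t_r$ descends to the claimed isomorphism $t_r(B)\colon\mathbb F^{\tilde f}_r(B)\to\mathbb F^{\tilde f}_r(B+2\pi)$ on quotients. The bijection $\epsilon\mapsto\epsilon$ between the shrinking families of boxes $\{B(a,b;\epsilon)\}$ and $\{B(a+2\pi,b+2\pi;\epsilon)\}$ is evidently compatible with the surjections $\pi^{B(a,b;\epsilon)}_{B(a,b;\epsilon'),r}$, so passing to the direct limit over $\epsilon\to 0$ produces the induced isomorphism $\hat t_r(a,b)\colon\hat\delta^{\tilde f}_r(a,b)\to\hat\delta^{\tilde f}_r(a+2\pi,b+2\pi)$.

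For item~(3), take the intersection over all $a\in\mathbb R$ of $t_r(\mathbb I^{\tilde f}_a(r)) = \mathbb I^{\tilde f}_{a+2\pi}(r)$: since the family $\{\mathbb I^{\tilde f}_{a+2\pi}(r)\}_{a\in\mathbb R}$ is merely a reindexing of $\{\mathbb I^{\tilde f}_{a}(r)\}_{a\in\mathbb R}$, one obtains $t_r(\mathbb I^{\tilde f}_{-\infty}(r)) = \mathbb I^{\tilde f}_{-\infty}(r)$, and identically $t_r(\mathbb I^\infty_{\tilde f}(r)) = \mathbb I^\infty_{\tilde f}(r)$. Both subspaces are therefore stable under $t_r$ and (by the analogous argument for $\tau^{-1}$) under $t_r^{-1}$, hence are $\kappa[t^{-1},t]$-submodules; their sum is therefore a $\kappa[t^{-1},t]$-submodule as well, and the quotient $H_r(\tilde X)/(\mathbb I^{\tilde f}_{-\infty}(r) + \mathbb I^\infty_{\tilde f}(r))$ inherits a canonical $\kappa[t^{-1},t]$-module structure.

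I do not expect a substantial obstacle here: the observation is essentially a bookkeeping consequence of the equivariance $\tilde f\circ\tau = \tilde f + 2\pi$ combined with functoriality of homology. The only point requiring mild care is the compatibility of $t_r$ with the direct limit defining $\hat\delta^{\tilde f}_r$, which reduces to the naturality of the shift $(a,b)\mapsto(a+2\pi,b+2\pi)$ with respect to the surjections among the spaces $\mathbb F^{\tilde f}_r(B(a,b;\epsilon))$.
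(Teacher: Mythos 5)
Your proof is correct and follows exactly the argument the paper leaves implicit: the equivariance $\tilde f\circ\tau=\tilde f+2\pi$ gives $\tau(\tilde f^{-1}(-\infty,a])=\tilde f^{-1}(-\infty,a+2\pi]$ and the analogous super-level statement, functoriality of homology then transports images along the commuting square of inclusions, and intersections, sums, quotients and the direct limit over $\epsilon\to 0$ are preserved because $t_r$ is a bijective linear map. Since the paper states this as an Observation with no accompanying proof, your write-up is precisely the natural justification and I see no gap.
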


Clearly the diagram (\ref{E313}) with the vertical arrows $t_r(a,b), t_r(B), \hat t_r(a,b)$induced by $t_r$ is commutative. 

\begin{equation}\label {E313}
\xymatrix{
H_r(\tilde X)\ar[dd]^{t_r} &\mathbb F^{\tilde f}_r(a,b)\ar[l]_{\supseteq}\ar[rr]^{\pi_r(a,b)}\ar[dd]^{t_r(a,b)}\ar[rd]_{\pi^B_{ab,r}} &&\hat\delta^{\tilde f}_r(a,b)\ar[dd]^{\hat t_r(a,b)}\\
& &\mathbb F^{\tilde f}_r(B)\ar[ru]^ {\pi ^{(a,b)} _{B,r}}\ar[dd]^<<<<{tr(B)} &\\ 
H_r(\tilde X) &\mathbb F^{\tilde f}_r(a+2\pi,b+2\pi)\ar[l]_- {\supseteq}\ar[rd]^{\pi^{B'}_r(a+2\pi, b+2\pi)}\ar[rr]^{\pi_r(a+2\pi,b+2\pi)} &&\hat\delta^{\tilde f}_r(a+2\pi,b+2\pi)\\
& &\mathbb F^{\tilde f}_r(B+2\pi)\ar[ru]^{\pi ^{(a+2\pi,b+2\pi)}_{B+2\pi,r}} &}\end{equation}
\begin{proposition}\label {P36}\ 
$\mathbb I^{\tilde f}_{-\infty}(r) = \mathbb I^{\infty}_{\tilde f} (r)= T(H_r(\tilde X)).$
\end{proposition}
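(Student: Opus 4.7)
The plan is to prove the two equalities $\mathbb I^{\tilde f}_{-\infty}(r) = T(H_r(\tilde X)) = \mathbb I^\infty_{\tilde f}(r)$ as four separate inclusions, using two ingredients from earlier in the paper: the finite dimensionality of $\mathbb F^{\tilde f}_r(a,b)$ guaranteed by Proposition \ref{P1}(3), and the compatibilities $t\cdot \mathbb I^{\tilde f}_a(r) = \mathbb I^{\tilde f}_{a+2\pi}(r)$ and $t\cdot \mathbb I^b_{\tilde f}(r) = \mathbb I^{b+2\pi}_{\tilde f}(r)$ provided by Observation \ref{O35}. Consequently $\mathbb I^{\tilde f}_a(r)$ is $t^{-1}$-stable and $\mathbb I^b_{\tilde f}(r)$ is $t$-stable, while both $\mathbb I^{\tilde f}_{-\infty}(r)$ and $\mathbb I^\infty_{\tilde f}(r)$ are full $\kappa[t^{-1},t]$-submodules of $H_r(\tilde X)$. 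I will also use the elementary fact that any $x\in H_r(\tilde X)$ has a chain representative of compact support, hence $x\in \mathbb I^{\tilde f}_a(r)\cap \mathbb I^b_{\tilde f}(r)$ for some finite $a,b$.

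For the inclusion $T(H_r(\tilde X))\subseteq \mathbb I^{\tilde f}_{-\infty}(r)\cap \mathbb I^\infty_{\tilde f}(r)$, take $x\in T$ and choose a polynomial annihilator $q(t)=c_0+c_1 t+\cdots +c_d t^d$ with $c_0,c_d\neq 0$ (possible because $t$ acts invertibly). The rearrangement $c_0 x = -\sum_{k=1}^{d} c_k t^k x$ writes $x$ as a $\kappa$-linear combination of $\{t^k x\}_{k=1}^d$; combined with $x\in \mathbb I^b_{\tilde f}(r)$ this gives $t^k x\in \mathbb I^{b+2\pi k}_{\tilde f}(r)\subseteq \mathbb I^{b+2\pi}_{\tilde f}(r)$ and hence $x\in \mathbb I^{b+2\pi}_{\tilde f}(r)$. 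Iterating yields $x\in \mathbb I^\infty_{\tilde f}(r)$. The dual manipulation $x = -c_d^{-1}\sum_{k=0}^{d-1} c_k t^{k-d} x$, combined with $x\in \mathbb I^{\tilde f}_a(r)$, iterates in the opposite direction to give $x\in \mathbb I^{\tilde f}_{-\infty}(r)$.

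The reverse inclusion $\mathbb I^{\tilde f}_{-\infty}(r)\subseteq T(H_r(\tilde X))$ rests on the observation that $\mathbb F^{\tilde f}_r(-\infty,b) = \mathbb I^{\tilde f}_{-\infty}(r)\cap \mathbb I^b_{\tilde f}(r) = \bigcap_{a} \mathbb F^{\tilde f}_r(a,b)$ is a finite dimensional $\kappa$-vector space, being a nested intersection of the finite dimensional spaces of Proposition \ref{P1}(3). Given $x\in \mathbb I^{\tilde f}_{-\infty}(r)$, pick $b$ with $x\in \mathbb I^b_{\tilde f}(r)$, so $x\in \mathbb F^{\tilde f}_r(-\infty,b)$. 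For every $n\geq 0$, $t^n x\in \mathbb I^{\tilde f}_{-\infty}(r)$ by stability, and $t^n x\in \mathbb I^{b+2\pi n}_{\tilde f}(r)\subseteq \mathbb I^b_{\tilde f}(r)$ because $\mathbb I^c_{\tilde f}(r)$ shrinks as $c$ grows; hence the entire forward orbit $\{t^n x\}_{n\geq 0}$ lies in the finite dimensional space $\mathbb F^{\tilde f}_r(-\infty,b)$. Any nontrivial $\kappa$-linear relation among its members yields a nonzero element of $\kappa[t^{-1},t]$ annihilating $x$, so $x\in T$. The inclusion $\mathbb I^\infty_{\tilde f}(r)\subseteq T$ follows symmetrically by running the backward orbit $\{t^{-n}x\}_{n\geq 0}$ inside the finite dimensional $\mathbb F^{\tilde f}_r(a,\infty)$.

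I expect no serious obstacle; the proof is purely algebraic once Proposition \ref{P1}(3) is invoked. The one point that requires attention is keeping the directions of $t$-stability straight: $\mathbb I^{\tilde f}_a(r)$ and $\mathbb I^b_{\tilde f}(r)$ are stable under $t$ in \emph{opposite} directions, and this orientation dictates which half of the two-sided orbit $\{t^n x\}_{n\in \mathbb Z}$ remains trapped in the finite dimensional intersection used to force torsion.
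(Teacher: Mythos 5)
Your proof is correct and follows essentially the same route as the paper: both directions rest on the rearrangement of a torsion relation to propagate membership across $2\pi$-shifts, and on the finite-dimensionality of $\mathbb F^{\tilde f}_r(a,b)$ from Proposition~\ref{P1}(3) to force a nontrivial $\kappa$-linear relation along a $t$-orbit. The only cosmetic differences are that you treat both $\mathbb I^{\tilde f}_{-\infty}(r)$ and $\mathbb I^{\infty}_{\tilde f}(r)$ explicitly where the paper handles $\mathbb I^{\infty}_{\tilde f}(r)$ and invokes symmetry, and that you choose an annihilator with nonzero top and bottom coefficients directly rather than factoring off a power $t^k$ as the paper does.
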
 

\begin{proof}
If $x \in T(H_r(\tilde X)) $ then there exists  an integer $l\in \mathbb Z$ and  a polynomial $P(t)= \alpha_n t^n +\alpha_{n-1} t^{n-1} \cdots \alpha_1 t + \alpha_0,$ $\alpha_i\in \kappa,$ $\alpha_0\ne 0$ such that $P(t)\ t^l x=0.$  Let $y=t^l x.$  
Since $H_r(\tilde X)= \cup_b \mathbb I^b_{\tilde f}(r)$ one has $y\in \mathbb I^b(r)$ for some $b \in \mathbb R.$ 
Since  $P(t) y=0$  one concludes that  
 $$y= - (\alpha_n/\alpha_0) t^{n-1} \cdots - (\alpha_1/\alpha_0) t y$$ 
and therefore $y\in \mathbb I^{b+2\pi}(r).$  

Repeating the argument  one concludes that $y\in I^{b+2\pi l}$ for any $l,$ hence $y \in \mathbb I^\infty(r).$ 
Since $x= t^{-l} y,$  
one has  $x\in \mathbb I^\infty (r).$ Hence $T(H_r(\tilde X))\subseteq I^\infty (r).$

Let $x\in \mathbb I^\infty(r).$  Since $H_r(\tilde X)= \cup _a \mathbb I_a^{\tilde f}(r)$ then $x\in \mathbb I_a(r)$ for some $a\in \mathbb R,$ and if in addition $x\in \mathbb I^\infty(r)$ then  by Observation \ref{O35}  3.  all $x, t^{-1}x, t^{-2} x, \cdots t^{-l}x,\cdots \in \mathbb I_a(r)\cap \mathbb I^\infty(r).$  Since by Proposition  \ref{P1} (3.) 
 the dimension of $\mathbb I_a(r)\cap \mathbb I^\infty(r)$ is finite, there exists $\alpha_{i_1}, \cdots \alpha_{i_l}$ such that $$(\alpha_{i_1} t^{-i_1} + \cdots \alpha_{i_l} t^{-i_l}) x=0.$$  This makes $x\in T(H_r(\tilde X))$. Hence $\mathbb I^\infty(r)\subseteq T(H_r(\tilde X)).$  
Therefore $\mathbb I^\infty(r) = T(H_r(\tilde X)).$ 
 By a similar argument one concludes that $H_r(\tilde  X)= \mathbb I_{-\infty}(r).$

\end{proof}

\medskip

Recall that:
\begin{itemize}
\item 
$H^N_r(X;\xi_f):= H_r(\tilde X)/ T(H_r(\tilde X)),$  
 \item 
$\pi(r): H_r(\tilde X)\to H^N_r(X;\xi_f)$  denotes the canonical projection and 
\item   the $\kappa-$vector spaces,  $H_r(\tilde X), T(H_r(\tilde X)), H^N_r(X;\xi_f),$ are $\kappa[t^{-1},t]-$modules with the multiplication by $t$ given by or induced by the isomorphism $t_r$ with $\pi(r)$ is $\kappa[t^{-1},t]-$linear. 
\end{itemize}
In view of Proposition \ref {P36} \  $T(H_r(\tilde X))$ is contained in $\mathbb F^{\tilde f}_r(a,b)$ and  $\mathbb F'^{\tilde f}_r(B) $
\  and then one defines 
\begin{itemize}
\item $^N\mathbb F^{\tilde f}_r (a,b): = {\mathbb F}^{\tilde f}_r(a,b) / T(H_r(\tilde X)),$ 

for any $B:=(a',a]\times [b, b')$  
\item $^N\mathbb F^{' \tilde f}_r (B): = {\mathbb F}^{' \tilde f}_r(B) / T(H_r(\tilde X)),$  $^N\mathbb F^{\tilde f}_r (B): =  ^N\mathbb F^{\tilde f}_r (a,b) /   ^N\mathbb F^{' \tilde f}_r(B),$ 
and then  
\item$ ^N{\hat\delta}^{\tilde f}_r(a,b)= \lim _{\epsilon \to 0}   {^N\mathbb F^{\tilde f}_r (B(a,b;\epsilon)}=^N\mathbb F^{\tilde f}_r (B(a,b;\epsilon))$ for $\epsilon$ small enough.
\end{itemize}
Clearly one has:
\begin{itemize}
\item 
$^N\mathbb F^{' \tilde f}_r (B)\subseteq ^N\mathbb F^{\tilde f}_r (a,b)\subset H^N_r(X;\xi_f),$ 

\item 
\begin{enumerate}
 \item $ \mathbb F^{\tilde f}_r(B)= ^N\mathbb F^{\tilde f}_r(B)  $ 
  \item $\hat \delta^{\tilde f}_r(a,b)= ^N \hat \delta^{\tilde f}_r(a,b) ,$
 \end{enumerate}
\item the 
 diagram below (\ref{D10}) is commutative with the vertical arrows isomorphisms.
\end{itemize}
\begin{equation} \label {D10}
\xymatrix {
\hat\delta ^{\tilde f}_r(a,b)\ar[dd]^{\hat t_r(a,b)}& ^N \mathbb F^{\tilde f}_r (a,b)\ar[l]\ar[dd]^{^N\hat t_r(a,b)}\ar[rd]\ar[rr] &&H^N_r(X,\xi_f)\ar[dd]^{^N\hat t_r}
\\
&&{^N\mathbb F^{\tilde f}_r(B)=\mathbb F^{\tilde f}_r(B)}
\ar[llu]\ar[dd]^<<<<<{^N\hat t_r(B)}&\\
\hat \delta ^{\tilde f}_r(a+2\pi,b +2\pi) & ^N\mathbb F^{\tilde f}_r(a+2\pi,b+2\pi)\ar[l]\ar[rd]\ar[rr]&&H^N_rX,\xi_f)\\
&&{^N\mathbb F^{\tilde f}_r(B+2\pi)= \mathbb F^{\tilde f}_r(B+2\pi)\ar[llu]}
\ar[llu]&.}
\end{equation}

\vskip .2in
Recall from Introduction that  $\langle  \rangle : \mathbb R^2\to \mathbb T=\mathbb R^2/\mathbb Z$ denotes the map 
which assigns to $(a,b)\in \mathbb R^2$ its equivalence class $\langle a, b\rangle\in \mathbb T.$  
One denotes by $\langle K\rangle \subseteq \mathbb T$  the image of $K\subseteq \mathbb R^2$ by the map $\langle\rangle$  in particular one writes  
$\langle a, b\rangle, \ \langle B\rangle,\
\langle cB \rangle $ for the image of $(a,b), \ B, \ cB.$

The  box $B= (a-\alpha, a]\times [b, b+\beta)$   is called {\it small} if $0< \alpha, \beta <2\pi,$ in which case the restriction of $\langle \rangle$ to $B$ is one to one;  
clearly
if $B$ is a small box so is any $(B+c)$  and  $(B+2\pi k)\cap (B+2\pi k') =\emptyset$ for $k \ne k' .$  
\vskip .2in 

For $\langle a,b \rangle \in \mathbb T$  and $\langle B\rangle \subseteq \mathbb T$ with $B= (a-\alpha, a]\times[b, b+\beta)$ a small box introduce:
\begin{itemize}
\item
$^N \mathbb F^{f}_r\langle a,b \rangle:= \sum_{k\in \mathbb Z} ^N \mathbb F^{\tilde f}_r(a+2\pi k, b+2\pi k) \subseteq H^N_r(X; \xi_f),$
\item $^N \mathbb F'^ {f}_r\langle B\rangle:=  \sum_{k\in \mathbb Z} \  ^N\mathbb F'^{\tilde f}_r(B+2\pi k)= (^N\mathbb F^{\tilde f}_r\langle a',b\rangle + ^N\mathbb F^{\tilde f}_r \langle a,b'\rangle)  \subseteq ^N\mathbb F^{\tilde f}_r\langle a,b\rangle\subseteq H^N_r(X;\xi_f)$

both  $\kappa[t^{-1},t]-$submodules of the free $\kappa[t^{-1},t]-$module  $H_r^N(X;\xi_f),$ hence f. g. free modules,

\item $\mathbb F^f_r\langle B\rangle := \oplus_{k\in \mathbb Z}  \mathbb F^{\tilde f}_r(B+2\pi k)=  
 \oplus_{k\in \mathbb Z} \  ^N\mathbb F^{\tilde f}_r(B+2\pi k),$ 
\item $\hat \delta ^f_r\langle a, b\rangle:= \oplus _{k\in \mathbb Z}\hat \delta^{\tilde f}_r(a+2\pi k, b+2\pi k)  = \oplus _{k\in \mathbb Z}\ ^N\hat \delta^{\tilde f}_r(a+2\pi k, b+2\pi k)$  

 both ($\mathbb F^f_r\langle B\rangle $ and  $\hat \delta ^f_r\langle a, b\rangle$)  free $\kappa[t^{-1},t]-$modules 
whose multiplication by $t$ is given by  the isomorphism  $\oplus_{k\in \mathbb Z} \hat t_r(a+2\pi k, b+2\pi k).$  
\end{itemize}
\vskip .2in 

Recall that for a set $S$ equipped with an action $\mu:\mathbb Z\times S\to S$ the $\kappa-$vector space $\kappa[S],$ of $\kappa-$valued finitely supported maps, has a structure of $\kappa[t^{-1},t]-$module which is free when the action is free and has a base indexed by the quotient set $S/\mathbb Z.$ If $S\subset \mathbb R^2$ is a discrete subset,  invariant to the action $\mu:\mathbb Z\times \mathbb R^2\to \mathbb R^2,$ given by $\mu(n, (a,b))= (a+2\pi n, b+ 2\pi n),$  then $\kappa[S]$ is a free $\kappa[t^{-1},t]-$module with a base indexed by $\langle S\rangle.$ 
For the box $B=(a-\alpha, a]\times [b, b+\beta)\subseteq \mathbb R^2$ one denotes by  $\widehat B$ and $\widehat cB$  the subsets in $\mathbb R^2$ 
\begin{equation}
\begin{aligned} 
 \widehat B:=\cup _{z\in \mathbb Z} (B+2\pi k)  \subset \mathbb R^2,\\
 \widehat {cB}:= \cup_{k\in \mathbb Z}\  c(B+2\pi k) \subset \mathbb R^2
 \end{aligned}\end{equation} 
   and by $\langle \widehat B\rangle \subseteq \mathbb T$ and $\langle \widehat cB\rangle \subseteq \mathbb T$ their images by the map $\langle\rangle.$ 

Clearly the sets 
$\widehat B$, $\widehat cB$ and $\supp\ \delta^{\tilde f}_r$ are invariant to the free action $\mu$ with quotient sets $\langle \widehat B\rangle $, $\langle \widehat cB\rangle $ and 
$\langle \supp\ \delta^{\tilde f}_r\rangle= \supp\ \delta^{f}_r$ respectively.
In view of the above and of  Proposition \ref{P34}  one can conclude that  the $\kappa-$vector spaces 
$$^N \mathbb F^f_r\langle a,b\rangle,  ^N {\mathbb F'}^f_r\langle B\rangle ,^N \mathbb F^f_r\langle a,b\rangle / ^N {\mathbb F'}^f_r\langle B\rangle$$ are free $\
\kappa[t^{-1},t]-$modules with bases indexed by 
$\langle \widehat B(a,b;\infty)\rangle \cap \supp\ \delta^{f},$   $\langle \widehat cB\rangle \cap \supp\ \delta^{f}$  and $\langle \widehat B\rangle \cap \supp\ \delta^{f}$ 
the quotient sets of 
$ \widehat B(a,b;\infty)\cap \supp\ \delta_r^{\tilde f},$ \ \   $ \widehat cB \cap \supp\ \delta_r^{\tilde f}$  and $ \widehat B \cap \supp\ \delta_r^{\tilde f}$ respectively.

For any  box $B= (a-\alpha,a]\times [b,b+\beta)$ and  $k\in \mathbb Z$ consider the $\kappa-$linear map 
 
$$\iota_r(a,b;k) : \frac{
^N\mathbb F^{\tilde f}_r (a+2\pi k, b+2\pi k)}{ ^N\mathbb {F'}^{\tilde f}_{r} (B+2\pi k)} \to 
 ^N \mathbb F^{f}_r\langle a,b \rangle / ^N \mathbb F'^ {f}_r\langle B\rangle
$$ 
induced by the inclusion 
$^N\mathbb F^{\tilde f}_r (a+2\pi k, b+2\pi k)\subset  \oplus_{k'\in \mathbb Z} ^N\mathbb F^{\tilde f}_r (a+2\pi k', b+2\pi k')$
 and let 
 $$\iota_r\langle a, b\rangle:=\oplus _{k'\in \mathbb Z} { \iota_r(a,b;k')}: ^N\mathbb F^f_r\langle B\rangle  \to  ^N\mathbb F^f_r\langle a, b\rangle / ^N {\mathbb F' }^f\langle B\rangle.$$
This map is surjective and, in view of the commutative diagram (\ref {D10}), is $\kappa[t^{-1},t]-$linear.

Since $\iota_r (a,b)$ is surjective and both the source and the target  are free modules  of equal finite rank it follows that $\iota_r(a,b)$ is an isomorphism.

One summarizes the above observation as Proposition \ref {P37} below.

\begin{proposition} \label {P37}\ 

1. The $\kappa[t^{-1},t]-$module $^N \mathbb F^{f}_r\langle a,b \rangle / ^N \mathbb F'^ {f}_r\langle B\rangle$ is free and  of rank  $\sharp (\langle \widehat B\rangle \cap \supp \delta^f_r).$ \
If $B$ is a small box then this rank is equal to $\sharp (B\cap \supp \delta^{\tilde f}_r).$

2. If $B$ is small then $\kappa-$linear map  $\iota_r\langle a, b\rangle $  is an isomorphism. 

3. For $\epsilon$ small enough $\hat \delta^f \langle a, b\rangle = ^N\mathbb F_r\langle B(a,b; \epsilon)\rangle$  with $B(a,b; \epsilon)= (a-\epsilon]\times [b, b+\epsilon).$
\end{proposition}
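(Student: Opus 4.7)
The plan is to prove items 1 and 2 in tandem by reducing everything to the direct-sum decompositions of Proposition \ref{P34} and Corollary \ref{C35} applied to the lift $\tilde f$, and to deduce item 3 as an immediate corollary of Proposition \ref{P3} item 1.

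First I would fix a collection of splittings $\mathcal{S}$ as in Proposition \ref{P34}, chosen to be $\mathbb{Z}$-equivariant, i.e.\ $t_r\cdot {}^{\mathcal{S}}\hat{\delta}^{\tilde f}_r(\alpha,\beta)={}^{\mathcal{S}}\hat{\delta}^{\tilde f}_r(\alpha+2\pi,\beta+2\pi)$; this is achieved by selecting one splitting per $\mathbb{Z}$-orbit in $\operatorname{supp}\delta^{\tilde f}_r$ and propagating by $t_r$. Applying Corollary \ref{C35} to the discrete family $\{(a+2\pi k, b+2\pi k)\}_{k\in\mathbb{Z}}$ and projecting modulo $TH_r(\tilde X)$ yields, as $\kappa$-vector spaces,
\[
{}^N\mathbb{F}^f_r\langle a,b\rangle=\bigoplus_{(\alpha,\beta)\in \widehat{B}(a,b;\infty)\cap \operatorname{supp}\delta^{\tilde f}_r}{}^{\mathcal{S}}\hat{\delta}^{\tilde f}_r(\alpha,\beta),
\]
where $\widehat{B}(a,b;\infty):=\bigcup_k B(a+2\pi k, b+2\pi k;\infty)$. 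A parallel application to the family $\{(a'+2\pi k, b+2\pi k),(a+2\pi k, b'+2\pi k)\}_{k\in\mathbb{Z}}$, together with the elementary identity $c(B+2\pi k)=B(a'+2\pi k,b+2\pi k;\infty)\cup B(a+2\pi k,b'+2\pi k;\infty)$, gives
\[
{}^N\mathbb{F}'^{\,f}_r\langle B\rangle=\bigoplus_{(\alpha,\beta)\in \widehat{cB}\cap \operatorname{supp}\delta^{\tilde f}_r}{}^{\mathcal{S}}\hat{\delta}^{\tilde f}_r(\alpha,\beta).
\]

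Next I would verify the set-theoretic partition $\widehat{B}(a,b;\infty)=\widehat{B}\sqcup \widehat{cB}$ under the smallness hypothesis on $B$. The equality $\widehat{B}(a,b;\infty)=\widehat{B}\cup \widehat{cB}$ is immediate from $B(a+2\pi k,b+2\pi k;\infty)=(B+2\pi k)\cup c(B+2\pi k)$; the disjointness is a short case-check using $0<a-a',\,b'-b<2\pi$ which rules out $(\alpha,\beta)\in (B+2\pi k)\cap c(B+2\pi k')$ for all choices of $k,k'$ (the cases $k'=k$, $k'>k$, $k'<k$ each contradict the smallness bounds). Combined with the first step, this identifies
\[
{}^N\mathbb{F}^f_r\langle a,b\rangle\,\big/\,{}^N\mathbb{F}'^{\,f}_r\langle B\rangle\;\cong\;\bigoplus_{(\alpha,\beta)\in \widehat{B}\cap \operatorname{supp}\delta^{\tilde f}_r}{}^{\mathcal{S}}\hat{\delta}^{\tilde f}_r(\alpha,\beta)
\]
as $\kappa$-vector spaces. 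The equivariance of $\mathcal{S}$ makes this decomposition $\kappa[t^{-1},t]$-linear, and since $\mathbb{Z}$ acts freely on the index set $\widehat{B}\cap \operatorname{supp}\delta^{\tilde f}_r$, choosing one representative per orbit produces a $\kappa[t^{-1},t]$-basis of cardinality $\sharp(\langle\widehat{B}\rangle\cap \operatorname{supp}\delta^f_r)$; when $B$ is small each $\mathbb{Z}$-orbit meets $B$ in exactly one point, giving the alternative count $\sharp(B\cap \operatorname{supp}\delta^{\tilde f}_r)$. This establishes item 1.

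For item 2, the map $\iota_r\langle a,b\rangle$ is surjective because every class in the target is represented by a finite sum $\sum_k x_k$ with $x_k\in{}^N\mathbb{F}^{\tilde f}_r(a+2\pi k, b+2\pi k)$, and each such $x_k$ is the image under $\iota_r(a,b;k)$ of its own class. The source $\bigoplus_k \mathbb{F}^{\tilde f}_r(B+2\pi k)$ is a finitely generated free $\kappa[t^{-1},t]$-module of rank $\dim_\kappa \mathbb{F}^{\tilde f}_r(B)=\sharp(B\cap \operatorname{supp}\delta^{\tilde f}_r)$ by Proposition \ref{P3} item 2 and the $\mathbb{Z}$-invariance of $\operatorname{supp}\delta^{\tilde f}_r$, which by item 1 matches the rank of the target; a surjection between finitely generated free modules of the same finite rank over the PID $\kappa[t^{-1},t]$ is an isomorphism. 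Finally, item 3 follows termwise from Proposition \ref{P3} item 1: for $\epsilon$ small enough, $\hat{\delta}^{\tilde f}_r(a+2\pi k,b+2\pi k)=\mathbb{F}^{\tilde f}_r(B(a,b;\epsilon)+2\pi k)$ for every $k\in\mathbb Z$, and taking the direct sum over $k$ recovers the required equality. The main obstacle throughout is the combinatorial disjointness $\widehat{B}\cap \widehat{cB}=\emptyset$, which is where the smallness bounds $0<a-a',\,b'-b<2\pi$ are used essentially; without them the quotient would absorb extra ${}^{\mathcal{S}}\hat{\delta}^{\tilde f}_r$ summands and the rank-counting argument for item 2 would collapse.
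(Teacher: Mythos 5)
Your proof is correct and follows essentially the same route the paper takes: the proposition is stated in the text as a summary of the preceding discussion, which rests on the Proposition~\ref{P34}/Corollary~\ref{C35} direct-sum decompositions, the observation that a free $\mathbb Z$-set $S$ yields a free $\kappa[t^{-1},t]$-module $\kappa[S]$ with basis $S/\mathbb Z$, and the surjectivity-plus-equal-rank argument for $\iota_r\langle a,b\rangle$. You supply the details the paper leaves implicit — the equivariant choice of splittings, the partition $\widehat B(a,b;\infty)=\widehat B\sqcup\widehat{cB}$ with its smallness case-check, and the explicit surjectivity argument — but the decomposition and the key lemmas invoked are identical.
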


\vskip .1in

Let $^N\pi_r(a,b): ^N\mathbb F^{\tilde f}_r(a,b)\to \hat \delta^{\tilde f}_r(a,b)$ be the canonical projection. 
As  in the definition of {\it splittings}  an $N-$splitting is a linear map $^N i_r(a,b):\hat \delta^{\tilde f}\to ^N\mathbb F^{\tilde f}_r(a,b)$  s.t. $^N\pi_r(a,b)\cdot ^N i_r(a,b)= id.$

\begin{definition}\label {D38}
A collection of splittings $\mathcal S:=\{  i_r(a,b):\hat \delta^{\tilde f}_r(a,b)\to \mathbb F^{\tilde f}_r(a,b), \ a,b \in \mathbb R\}$ resp. $N-$splittings $S:=\{ ^N i_r(a,b):\hat \delta^{\tilde f}_r(a,b)\to ^N\mathbb F^{\tilde f}_r(a,b), a,b\in \mathbb  R \}$ 
 is called {\it collection of compatible splittings resp. compatible $N-$splittings} if $$\hat t_r(a,b)\cdot i_r(a,b)= i_r(a+2\pi, b+2\pi)\cdot \hat t_r(a,b)$$ or $$\hat t_r(a,b)\cdot \ ^Ni_r(a,b)= 
 ^N i_r(a+2\pi, b+2\pi)\cdot \hat t_r(a,b).$$
\end{definition}

Note that: 
\begin{enumerate}
\item The splitting $i_r(a,b)$ of $\pi_r(a,b) $ induce the splitting $^N i_r(a,b)$ of $^N\pi_r(a,b)$ by composition with the canonical projection $\mathbb F^{\tilde f}_r(a,b)\to ^N\mathbb F^{\tilde f}_r(a,b)= \mathbb F^{\tilde f}_r(a,b)/ TH_r(\tilde M).$ 
\item Collections of compatible splittings  and therefore of compatible $N-$splittings exist. It suffices to start with splittings for pairs $(a,b), 0\leq a < 2\pi$ and extend them for $a$ outside such interval by composing with the appropriate $\hat t^r$ and get compatible splittings and then derive from them compatible $N-$splittings.
\item 
The linear  maps $\pi_r(a,b) $ or $^N \pi_r(a,b)$ and the collection $\mathcal S$  of compatible splittings or compatible $N-$splittings, 
induce $$\pi_r\langle a,b\rangle : \mathbb F^{f}_r\langle a,b\rangle \to \hat \delta^{f}_r\langle a,b\rangle \ \rm {and} \  i_r\langle a,b\rangle:\hat \delta^{ f}_r\langle a,b\rangle \to \mathbb F^{ f}_r \langle a,b\rangle$$  or 
$$\pi_r\langle a,b\rangle: ^N\pi_r\langle a,b\rangle: ^N\mathbb F^{f}_r\langle a,b\rangle \to \hat \delta^{\tilde f}_r\langle a,b\rangle \ \rm {and}\  ^N i_r\langle a,b\rangle:\hat \delta^{f}_r\langle a,b\rangle \to ^N \mathbb F^{ f}_r \langle a,b\rangle.$$
\end{enumerate}
Item 3. requires some arguments.

To define $\pi_r\langle a, b\rangle$ and $^N \pi_r\langle a, b\rangle$ we notice first that the linear maps $\pi_r(a,b) $ and  $^N \pi_r(a,b)$  extend to $\sum_{i\in \mathbb Z} \mathbb F^{\tilde f}_r(a+2\pi i, b+2\pi i) \subseteq H_r(\tilde M)$ and $\sum_{i\in \mathbb Z} \ ^N \mathbb F^{\tilde f}_r(a+2\pi i, b+2\pi i)\subset ^N H_r(M;\xi)$ respectively. If this will be the case denote these extensions by $\overline \pi_r\langle a, b\rangle$ and $^N\overline \pi_r\langle a, b\rangle.$ 
To show this is the case it suffices to verify that if $x_i \in \mathbb F^f_r(a+2\pi i, b+ 2\pi i)$ and $\sum _{i\in \mathbb Z} x_i\in T(H_r(\tilde x)$ then $x_i\in {\mathbb F'}^f_r(B(a+2\pi i, b+ 2\pi i; 2\pi)) +T H_r(\tilde M);$ this guaranties that 
$\oplus_i \pi_r(a+2\pi i, b+2\pi i)(x_i)=0.$

Indeed since 
$$B(a+2\pi i, b+2\pi i; \infty) \cap (\bigcup _{j\in \mathbb Z, j\ne i, } B(a+2\pi j, b+2\pi j; \infty))\subseteq cB(a+2\pi i, b+2\pi i; 2\pi),$$ 
in view of Proposition \ref {P34} item  2.b, if 
$ x= \sum _{i\in \mathbb Z} x_i \in TH_r(\tilde M)$ with
 $x_i\in \mathbb F_r(a+2 \pi i, b+ 2\pi i)$ then  $$x_i = x- \sum _{i\ne j} x_j \in T H_r(\tilde X) +{\mathbb F'}^{\tilde f}_r(B(a+2\pi i, b +2\pi i; 2\pi)).$$ 
Define $\pi_r\langle a, b\rangle $ resp. $^N \pi_r\langle a, b\rangle $ to be the direct sum of $\overline \pi_r\langle a+ 2\pi i, b+2\pi i\rangle$ resp. of $^N\overline \pi_r\langle a+ 2\pi i, b+2\pi i\rangle$  over all $i\in \mathbb Z.$

Clearly the map $\pi_r\langle a,b\rangle$ is the factorisation of $\oplus_{k\in \mathbb \mathbb Z} \pi_r(a+2\pi k, b+2\pi k): \oplus_{k\in \mathbb \mathbb Z}\mathbb F^{\tilde f}_r(a+2\pi k, b+ 2\pi k)\to \hat\delta^f_r\langle a, b\rangle= \oplus \delta^{\tilde f}_r(a+2\pi k, b+2\pi k)$ by the projection $\pi : \oplus_{k\in \mathbb \mathbb Z}\mathbb F^{\tilde f}_r(a+2\pi k, b+ 2\pi k)\to 
\sum _{k\in \mathbb \mathbb Z}\mathbb F^{\tilde f}_r(a+2\pi k, b+ 2\pi k).$  A similar observation holds for $^N\pi_r\langle a,b\rangle.$

Define $i_r\langle a, b\rangle$  to be  the composition of $\oplus_{k\in \mathbb Z} i_r(a+2\pi k, b+2\pi k)$  with the projection $\pi : \oplus_{k\in \mathbb \mathbb Z}\mathbb F^{\tilde f}_r(a+2\pi k, b+ 2\pi k)\to 
\sum _{k\in \mathbb \mathbb Z}\mathbb F^{\tilde f}_r(a+2\pi k, b+ 2\pi k)$
and $^Ni_r\langle a, b\rangle$ to be the composition of  
$\oplus_{k\in \mathbb Z} \  ^Ni_r(a+2\pi k, b+2\pi k)$ with the projection 
$\pi : \oplus_{k\in \mathbb \mathbb Z}\  ^N\mathbb F^{\tilde f}_r(a+2\pi k, b+ 2\pi k)\to 
\sum _{k\in \mathbb \mathbb Z}\  ^N\mathbb F^{\tilde f}_r(a+2\pi k, b+ 2\pi k).$ 

Then a collection $\mathcal S$ of compatible splittings and implicitly of compatible $N-$splittings defines  the $\kappa-$linear map 
$$ \boxed{^\mathcal S I^N_r= \bigoplus_{\langle a,b\rangle \in \mathbb T} i_r\langle a,b\rangle: \bigoplus_{(a,b)} \hat\delta^{f}_r\langle a,b \rangle \to H^N_r(M; \xi_f)}\ ,$$  and for a small box $B$  the 
$\kappa-$linear maps 
  
$$\boxed{ ^\mathcal S I^N_r\langle B\rangle = \bigoplus_{\langle a,b\rangle \in \langle B \rangle} i^B_r\langle a,b): \bigoplus_{\langle a,b\rangle \in \langle B \rangle} \hat\delta^{\tilde f}_r\langle a, b \rangle \to ^N \mathbb F^{\tilde f}_r\langle B\rangle} \ .$$  

\begin{proposition}\label {P310}
Both $^\mathcal S I^N_r$ and $^\mathcal S I^N_r\langle B\rangle$ are $\kappa[t^{-1}, t]-$isomorphisms.
\end{proposition}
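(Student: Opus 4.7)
The plan is to reduce both statements to Proposition~\ref{P34} applied to the lift $\tilde f:\tilde X\to\mathbb R$, then to use Proposition~\ref{P36} to identify the quotient of $H_r(\tilde X)$ by $\mathbb I^{\tilde f}_{-\infty}(r)+\mathbb I^\infty_{\tilde f}(r)$ with the Novikov homology $H^N_r(X;\xi_f)$. The $\kappa[t^{-1},t]$-linearity will come from the compatibility condition built into Definition~\ref{D38}: both source and target carry natural $\kappa[t^{-1},t]$-module structures (on the source, through the isomorphisms $\hat t_r(a+2\pi k,b+2\pi k)$ that shift between summands indexed by $k\in\mathbb Z$; on the target, via the deck translation $t_r$). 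The relation $\hat t_r(a,b)\cdot{}^Ni_r(a,b)={}^Ni_r(a+2\pi,b+2\pi)\cdot\hat t_r(a,b)$, combined with Observation~\ref{O35}, forces $\mathbb Z$-equivariance of $\bigoplus_k{}^Ni_r(a+2\pi k,b+2\pi k)$, which is exactly $\kappa[t^{-1},t]$-linearity of $^\mathcal S I^N_r$ and of its small-box analogue. So it suffices to check bijectivity as a $\kappa$-linear map.

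For the small-box case, smallness of $B$ implies that the translates $\{B+2\pi k\}_{k\in\mathbb Z}$ are pairwise disjoint. Proposition~\ref{P34} item~1 applied to $\tilde f$ yields, for each $k$, an isomorphism $^\mathcal S I^{B+2\pi k}_r:\bigoplus_{(a,b)\in B+2\pi k}\hat\delta^{\tilde f}_r(a,b)\to\mathbb F^{\tilde f}_r(B+2\pi k)$. Direct-summing these over $k$ and regrouping the source by $\mathbb Z$-orbits yields an isomorphism onto $\bigoplus_{k\in\mathbb Z}\mathbb F^{\tilde f}_r(B+2\pi k)=\mathbb F^f_r\langle B\rangle$ from a space naturally identified with $\bigoplus_{\langle a,b\rangle\in\langle B\rangle}\hat\delta^f_r\langle a,b\rangle$. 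This composite is exactly $^\mathcal S I^N_r\langle B\rangle$, and, composed if needed with the isomorphism $\iota_r\langle a,b\rangle$ of Proposition~\ref{P37} item~2, identifies the target with the quotient $^N\mathbb F^f_r\langle a,b\rangle/{}^N\mathbb F'^f_r\langle B\rangle$.

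For the global claim I would invoke the limiting form of Proposition~\ref{P34} established at the end of its proof: $\pi_r\cdot{}^\mathcal S I_r:\bigoplus_{(a,b)\in\mathbb R^2}\hat\delta^{\tilde f}_r(a,b)\to H_r(\tilde X)/(\mathbb I^{\tilde f}_{-\infty}(r)+\mathbb I^\infty_{\tilde f}(r))$ is an isomorphism. By Proposition~\ref{P36} the denominator equals $TH_r(\tilde X)$, so the codomain is $H^N_r(X;\xi_f)$. Regrouping the source by $\mathbb Z$-orbits turns it into $\bigoplus_{\langle a,b\rangle\in\mathbb T}\hat\delta^f_r\langle a,b\rangle$, and by construction of $^Ni_r\langle a,b\rangle$ via the extensions $^N\overline\pi_r\langle a,b\rangle$ described just before Definition~\ref{D38}, the resulting composite coincides with $^\mathcal S I^N_r$. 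The main obstacle throughout will be the bookkeeping between external direct sums $\bigoplus_k$ appearing in the sources and internal sums $\sum_k$ inside $H^N_r(X;\xi_f)$: one must verify that the only relations introduced by collapsing a direct sum to a sum are absorbed by quotienting out the torsion. This is precisely what Proposition~\ref{P36} guarantees globally, what Proposition~\ref{P37} item~2 makes explicit for small boxes, and what the extension argument preceding Definition~\ref{D38} checks at the level of each $\pi_r\langle a,b\rangle$.
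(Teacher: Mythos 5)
Your proposal is correct and follows essentially the same route as the paper: reduce to Proposition~\ref{P34} applied to the lift $\tilde f$, use Proposition~\ref{P36} to identify $H_r(\tilde X)/(\mathbb I^{\tilde f}_{-\infty}(r)+\mathbb I^\infty_{\tilde f}(r))$ with $H^N_r(X;\xi_f)$, and get $\kappa[t^{-1},t]$-linearity from the compatibility condition in Definition~\ref{D38} together with Observation~\ref{O35}. The only difference is organizational: where the paper argues via the commutative ladders (\ref{D4}) and (\ref{D5}) and declares the horizontal arrows to be isomorphisms by appeal to Proposition~\ref{P34}, you instead directly sum the box-level isomorphisms of Proposition~\ref{P34} over the pairwise-disjoint translates $B+2\pi k$ and regroup by $\mathbb Z$-orbits; for the global case you use the limiting form of Proposition~\ref{P34} exactly as the paper does. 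Your explicit attention to the clash between external $\bigoplus_k$ in the source and internal $\sum_k$ in $H^N_r$ is the one genuinely delicate point, and your resolution (that the surjectivity-of-extension argument preceding Definition~\ref{D38} together with Proposition~\ref{P36} absorbs the relations into the torsion, and Proposition~\ref{P37} item~2 does this on boxes) is the correct reading of how the paper distributes that verification across its preparatory material.
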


Indeed for a chosen collection $\mathcal S$ of compatible $N-$splittings consider   
the diagrams  

\begin{equation} \label {D4}
\xymatrix{ \bigoplus_{\langle\alpha,\beta\rangle} \hat \delta^{f}\langle \alpha, \beta \rangle \ar [rr]^{^\mathcal S I^N_r}  &&H^N_r(X;\xi)\\ 
\bigoplus _{\{\langle\alpha,\beta\rangle, \alpha\leq a, \beta\geq b\}}     \hat \delta^{ f} \langle \alpha, \beta \rangle \ar[u]^{\subseteq}\ar [rr]^{^\mathcal S I^N_r }  && ^N\mathbb  F^{ f}_r\langle a,b\rangle \ar[u]^{\subseteq}\\ 
\bigoplus _{\langle\alpha,\beta\rangle\in \langle cB\rangle} \hat \delta^{f}\langle \alpha, \beta\rangle \ar[u]^{\subseteq}\ar [rr]^{^\mathcal S I^N_r 
} && ^N {\mathbb F'}^{ f}_r \langle B\rangle \ar[u]^{\subseteq}} 
\end{equation}

and

\begin{equation}\label {D5}
\xymatrix{ 
\bigoplus _{\langle\alpha,\beta\rangle, \alpha\leq a, \beta\geq b}      \hat \delta^{ f} \langle \alpha, \beta\rangle \ar[d]^{\pi'_r}\ar [rr]^{^\mathcal S I^N_r }  && ^N\mathbb F^{ f}_r \langle a,b\rangle \ar[d]^{\pi''_r}\\
\bigoplus _{\langle\alpha,\beta\rangle\in \langle B\rangle} \hat \delta^{ f} \langle \alpha, \beta\rangle \ar [rr]^{^\mathcal S I^N_r\langle B\rangle} && {^N \mathbb F}^f_r\langle B\rangle
= \mathbb F^{ f}_r \langle B\rangle
.} 
\end{equation}
with $\pi'_r$ and $\pi''_r$ the obvious projections.

In view of Proposition \ref {P34} one has:

\hskip .1in $\hat \delta^{ f} \langle \alpha, \beta\rangle $
is a free $\kappa[t^{-1},t]-$module  with the multiplication by $t$ given by the isomorphism $\bigoplus_{k\in \mathbb Z} \hat t_r(\alpha+ 2\pi k, \beta +2\pi k),$ 

\hskip .1in the vector spaces involved in the above diagrams are all free $\kappa[t^{-1},t]-$modules  and  
in view of the commutativity of diagrams (\ref{E313}) and (\ref{D10}) all  arrows  in both diagrams are $\kappa[t^{-1},t]-$linear.  

\hskip .1in the horizontal arrows in the above diagrams are  isomorphisms, in particular so are $^\mathcal SI^N_r$'s and $^\mathcal SI^N_r\langle B\rangle .$ 
q.e.d

\begin{proposition} \label {P10} 

Both  $ ^N\mathbb F^{ f}_r\langle a,b\rangle $ and $^N {\mathbb F'}^{ f}_r \langle B\rangle $ are  split free submodules of $H^N_r(X;\xi),$ 
and ${\mathbb F}^{ f}_r \langle B\rangle $ is a quotient of split free submodules  hence also free. In particular $\hat \delta^f_r \langle a, b\rangle,$  which is canonically isomorphic to $\mathbb F^f_r \langle B(a,b;\epsilon)\rangle$ for $\epsilon <\epsilon (f),$ is a quotient of split free submodules
$\mathbb F_r^f \langle a, b\rangle/ 
{\mathbb F'}^f_r\langle B(a,b;\epsilon)\rangle.$  
\end{proposition}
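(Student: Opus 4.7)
The plan is to leverage Proposition \ref{P310} together with the commutative diagrams (\ref{D4}) and (\ref{D5}) to exhibit both modules as direct summands of $H^N_r(X;\xi_f)$ via a chosen collection of compatible $N$-splittings. First I fix such a collection $\mathcal S$, whose existence is established in item (2) of the remarks preceding Proposition \ref{P310}. By Proposition \ref{P310}, the map
$$^\mathcal S I^N_r: \bigoplus_{\langle \alpha,\beta\rangle} \hat\delta^f_r\langle \alpha,\beta\rangle \longrightarrow H^N_r(X;\xi_f)$$
is a $\kappa[t^{-1},t]$-module isomorphism. In particular, for any subset $S\subseteq \mathbb T$, the submodule $\bigoplus_{\langle \alpha,\beta\rangle\in S} \hat\delta^f_r\langle\alpha,\beta\rangle$ maps onto a split free $\kappa[t^{-1},t]$-submodule of $H^N_r(X;\xi_f)$, with canonical complementary splitting supplied by $\mathbb T\setminus S$.

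Next I exploit diagram (\ref{D4}). Its two lower horizontal arrows are, by Proposition \ref{P310} applied to the appropriate "boxes at infinity" (compare the passage to the projective limit in the proof of Proposition \ref{P34}), isomorphisms onto $^N\mathbb F^f_r\langle a,b\rangle$ and onto $^N{\mathbb F'}^f_r\langle B\rangle$ respectively. Taking $S=\{\langle\alpha,\beta\rangle : \alpha\leq a,\ \beta\geq b\}$ exhibits $^N\mathbb F^f_r\langle a,b\rangle$ as a split free $\kappa[t^{-1},t]$-submodule of $H^N_r(X;\xi_f)$, and taking $S=\langle \widehat{cB}\rangle$ exhibits $^N{\mathbb F'}^f_r\langle B\rangle$ similarly. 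This establishes the first two assertions.

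For the statement about $\mathbb F^f_r\langle B\rangle$, I use diagram (\ref{D5}): the top horizontal arrow is the isomorphism just produced, and $^\mathcal S I^N_r\langle B\rangle$ in the bottom row is an isomorphism by Proposition \ref{P310}. Commutativity identifies the quotient $\mathbb F^f_r\langle B\rangle = {}^N\mathbb F^f_r\langle a,b\rangle / {}^N{\mathbb F'}^f_r\langle B\rangle$ with the free module $\bigoplus_{\langle\alpha,\beta\rangle\in \langle B\rangle} \hat\delta^f_r\langle\alpha,\beta\rangle$, so it is a quotient of split free submodules and is itself free. For the last claim, specialize $B=B(a,b;\epsilon)$ with $\epsilon$ small enough that $\langle B\rangle \cap \supp\delta^f_r = \{\langle a,b\rangle\}$, so that Proposition \ref{P37}(3) gives $\hat\delta^f_r\langle a,b\rangle = \mathbb F^f_r\langle B(a,b;\epsilon)\rangle$.

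The main delicate point is not the module-theoretic bookkeeping but the verification that all arrows in diagrams (\ref{D4}) and (\ref{D5}) really are $\kappa[t^{-1},t]$-linear, so that the decompositions above do occur in the category of $\kappa[t^{-1},t]$-modules rather than only $\kappa$-vector spaces. This is precisely what the compatibility condition $\hat t_r(a,b)\cdot {}^N i_r(a,b)= {}^N i_r(a+2\pi, b+2\pi)\cdot \hat t_r(a,b)$ in Definition \ref{D38} is designed to guarantee, and it is built into the construction of $^\mathcal S I^N_r$ and $^\mathcal S I^N_r\langle B\rangle$; once this compatibility is invoked, the splittings transport verbatim.
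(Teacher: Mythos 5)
Your argument is correct and follows essentially the same route as the paper: Proposition \ref{P10} is there left as an immediate consequence of Proposition \ref{P310} and the diagrams (\ref{D4}) and (\ref{D5}), whose rows you correctly identify as the relevant isomorphisms induced by a fixed collection of compatible $N$-splittings. Your observation that the compatibility condition of Definition \ref{D38} is precisely what makes the decompositions $\kappa[t^{-1},t]$-linear (via diagrams (\ref{E313}) and (\ref{D10})) matches the paper's own emphasis.
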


{\it Definition of $\delta^f_r, \tilde \delta^f_r, \hat \delta^f_r, \hat {\hat  \delta}^f_r$  and $P^f_r(z)$}
\vskip .1in 
 In view of Proposition \ref {P1}, \ref{P2} , \ref{P37} and of Proposition \ref{P10} the assignments
 \begin{enumerate}
 \item $\mathbb T\ni \langle a, b\rangle \rightsquigarrow \delta^f_r \langle a, b\rangle,$  
 \item $\mathbb T\ni \langle a, b\rangle \rightsquigarrow \tilde \delta^f_r \langle a, b\rangle= (\mathbb F^f_r\langle a, b\rangle, {\mathbb F'}_r^f\langle B(a,b;\epsilon)\rangle),$  $\epsilon $ small.
 \item $\mathbb T\ni \langle a, b\rangle \rightsquigarrow \hat \delta^f_r \langle a, b\rangle,$
 \item $\mathbb T\ni \langle a, b\rangle \rightsquigarrow \hat {\hat \delta}^f_r \langle a, b\rangle,$ with $\hat {\hat \delta}^f_r \langle a, b\rangle$
 the von Neumann completion of $ {\hat \delta}^f_r \langle a, b\rangle$
\end{enumerate}
  defined by 
  \begin{enumerate}
 \item $ \delta^f_r \langle a, b\rangle:= \delta^{\tilde f}(a,b),$  
 \item $\tilde \delta^f_r \langle a, b\rangle:= (\mathbb F_r^f\langle a, b\rangle, 
{\mathbb F'}^f_r\langle B(a,b;\epsilon)\rangle,$ $\epsilon$ small enough,
 \item $\hat \delta^f_r \langle a, b\rangle = (\mathbb F_r^f\langle a, b\rangle / 
{\mathbb F'}^f_r\langle B(a,b;\epsilon)\rangle,$ $\epsilon$ small enough 
\end{enumerate}
 are configurations of points with multiplicity, of pairs of submodules of $H^N_r(X;\xi_f),$ 
 of free $\kappa[t^{-1},t]-$modules.

 We use the identification of $\mathbb T$ with $\mathbb C\setminus 0$ provided by the map $\langle a, b\rangle \to  z=e^{ia + (b-a)}$
 and if $z_1, z_2, \cdots z_k\in \mathbb C\setminus 0$ are the points in the support of $\delta^f_r,$  when regarded in $\mathbb C\setminus 0,$ define the polynomial 
 $$P_r^f(z):= \prod (z-z_i)^{ \delta^f_r(z_i)}.$$
  \vskip .1in  

When $\kappa=\mathbb C$ the von Neumann completion described in section \ref{S2} converts $\mathbb C[t^{-1},t]-$ into the von Neumann algebra $L^\infty(\mathbb S^1)$ and a $\mathbb C[t^{-1},t]-$valued inner product converts  $H^N_r(M;\xi)$ into a $L^\infty(\mathbb S^1)-$Hilbert module and ${\mathbb F'}^f_r \langle B\rangle,$  ${\mathbb F}^f_r\langle B\rangle$ and  $\hat \delta \langle a,b\rangle$ into Hilbert submodules.  The von Neumann completion  leads to the configuration  $\hat {\hat \delta}^f_r$ of mutually orthogonal $L^\infty(\mathbb S^1)-$Hilbert modules.  In case $X$ is an underlying space of a closed Riemannian  manifold or of a simplicial complex (hence a space equipped with a triangulation, the additional structure, the Riemannian metric or the triangulation provide such inner product.  
\vskip.1in

\section{ Proof of Theorems \ref{TT1}  and \ref{TT2}}

{\it Proof of Theorem \ref{TT1}.}

Item 1. follows from  Proposition \ref{P3} item 3. and the definitions of $ \delta^f_r, \hat \delta^f_r $ and $\hat {\hat \delta}^f_r .$
 
Item 2. follows from the fact that $\pi(r)\cdot \ ^\mathcal S I_r$ is an isomorphism, as established in Proposition \ref{P34} item 1. applied to $\tilde  f$ and from the definitions of $ \delta^f_r, \hat \delta^f_r $ and $\hat {\hat \delta}^f_r.$
The configuration $\hat\delta^f_r$ is derived from a configuration of pairs as described in section \ref{S2} with 
$\hat\delta^f_r \langle a,b\rangle= \mathbb F^f_r \langle a,b\rangle / {\mathbb F'}^f_r \langle B(a,b;\epsilon)\rangle$  for any $\epsilon <\epsilon (f).$

For Item 3.  one proceeds  as in the proof of Theorem 4.1 item 4. in \cite {B1} in case $X$ is a compact smooth manifold or a finite simplicial complex.  For example in case $X$ is a smooth manifold, possibly with boundary, 
any angle valued map is arbitrary closed to  a Morse angle valued map $f$ which takes different values on different critical points. Then the same remains true for $\tilde f:\tilde X\to \mathbb R,$ an infinite ciclic cover of this Morse map; this guarantees that for the sequence of  critical values $\cdots c_{i-1} < c_i  < c_{i+1} < \cdots ,$   the inclusion induced linear maps $H_\ast (\tilde X_{c_{i-1}}) \to H_\ast(\tilde X_{c_i})$ have co-kernel of dimension at most one.   As argued in the proof of Theorem 1 item 4. in \cite {B1}, this implies that $\delta^{\tilde f}_r$ and then $\delta^f_r$ takes only $0$ or $1$ as values.
 In the same way as in \cite{B1}, with the help of results on compact Hilbert cube manifolds e.g Theorem \ref{TT} below,  one derives Item 3. in Theorem \ref{TT1} in the generality stated.
\vskip .1in 

\noindent {\it Proof of Theorem \ref{TT2}.}

First observe that in view of  
Observation \ref{O} and Theorem \ref {TT} (stating results about Hilbert cube manifolds)  
it will suffice to prove Theorem \ref{TT2} for $X$ a finite simplicial complex.  

Indeed, if Theorem \ref {TT2} holds for any finite simplicial complex in view of Observation \ref {O} and Theorem \ref {TT} item 3. it holds for $K\times I^\infty$ hence by Theorem \ref {TT} item 2. for $X$ a compact Hilbert cube manifold and then again by Theorem \ref {TT} item 1. and Observation \ref {O}, holds for any compact ANR.  
\vskip .1in

For a continuous map $f:X\to \mathbb S^1$ and $K$ a compact space  denote by $f _K: X\times K\to \mathbb S^1$  the composition $f_K: = f\cdot \pi_X$ with $\pi_X: X\times K\to X$ the first factor projection.

\begin{obs} \label {O}
If $f:X\to \mathbb S^1$ is a continuous map with $X$ a compact ANR and $K$ is a contractible compact ANR then $\delta^f_r= \delta^{f_K}_r$ and $\hat \delta^f_r= \hat \delta^{f_K}_r.$ 
\end{obs}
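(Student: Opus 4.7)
The plan is to exhibit a natural isomorphism between the homological data defining the configurations on the $X\times K$ side and those defining them on the $X$ side, and to verify that this isomorphism respects every piece of structure used in Section~\ref{S3}. First I would identify the infinite cyclic cover of $X\times K$ associated to $\xi_{f_K}=\pi_X^\ast(\xi_f)$: since $K$ is a contractible compact ANR, the space $\tilde X\times K$ equipped with the $\mathbb Z$-action $\mu(n,(\tilde x,k)):=(\tau^n\tilde x,k)$ is such a cover, and $\widetilde{f_K}:=\tilde f\circ\pi_{\tilde X}:\tilde X\times K\to\mathbb R$ is a lift of $f_K$ in the sense of Section~\ref{S2}.

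Next I would exploit the product structure of the sub- and superlevel sets, $\widetilde{f_K}^{-1}(-\infty,a]=\tilde f^{-1}(-\infty,a]\times K$ and $\widetilde{f_K}^{-1}[b,\infty)=\tilde f^{-1}[b,\infty)\times K$, and pick any point $k_0\in K$ to define the section $\sigma(\tilde x):=(\tilde x,k_0)$. The map $\sigma$ is $\mathbb Z$-equivariant and, because $K$ is contractible, $\pi_{\tilde X}$ is a homotopy equivalence on every sub-, super-, and level set with $\sigma$ as a homotopy inverse; hence $\sigma_\ast$ provides canonical $\kappa$-linear isomorphisms
\[
H_r(\tilde f^{-1}(-\infty,a])\xrightarrow{\cong} H_r(\widetilde{f_K}^{-1}(-\infty,a]),\qquad H_r(\tilde X)\xrightarrow{\cong} H_r(\tilde X\times K),
\]
and similarly for $[b,\infty)$, compatibly with all inclusions of sub/superlevels and with $\tau_\ast = t_r$. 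From this I would deduce $\sigma_\ast(\mathbb I^{\tilde f}_a(r))=\mathbb I^{\widetilde{f_K}}_a(r)$ and $\sigma_\ast(\mathbb I^b_{\tilde f}(r))=\mathbb I^b_{\widetilde{f_K}}(r)$, hence isomorphisms $\mathbb F^{\tilde f}_r(a,b)\cong \mathbb F^{\widetilde{f_K}}_r(a,b)$ and ${\mathbb F'}^{\tilde f}_r(B)\cong{\mathbb F'}^{\widetilde{f_K}}_r(B)$ for every box $B$, all intertwined by $t_r$. Passing to the limit as $\epsilon\to 0$, quotienting by torsion and bundling the $\mathbb Z$-orbits in $\mathbb T$ would then give an identification of the configurations of pairs $\tilde\delta^f_r=\tilde\delta^{f_K}_r$; Proposition~\ref{P10} and the definitions of Section~\ref{S3} deliver the stated equalities $\delta^{f_K}_r=\delta^f_r$ and $\hat\delta^{f_K}_r=\hat\delta^f_r$.

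The main obstacle I anticipate is purely bookkeeping: checking that $\sigma_\ast$ commutes with every map appearing in diagrams~(\ref{E2}), (\ref{E313}) and~(\ref{D10})---the inclusions between $\mathbb F^{\tilde f}_r(a,b)$'s as $(a,b)$ varies, the projections onto $\mathbb F^{\tilde f}_r(B)$, and the deck-transformation action $t_r$ that encodes the $\kappa[t^{-1},t]$-module structure. All of these compatibilities already hold at the level of spaces, since $\sigma$ is a $\mathbb Z$-equivariant section and $\pi_{\tilde X}$ respects the level-set decomposition on the nose; after applying $H_r(-)$ they pass to the corresponding commutative diagrams of $\kappa$-vector spaces, and no new input beyond the contractibility of $K$ and the functoriality of the constructions of Section~\ref{S3} is needed.
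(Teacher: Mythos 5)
Your argument is correct and is precisely the "straightforward from the definitions" argument the paper has in mind but does not spell out: the cyclic cover of $(X\times K,\xi_{f_K})$ is $\tilde X\times K$ with the sub- and super-level sets of $\widetilde{f_K}$ being products over those of $\tilde f$, so contractibility of $K$ makes the projection a homotopy equivalence on every level/sub/super-level set, and this carries all of $\mathbb I_a,\ \mathbb I^b,\ \mathbb F_r(a,b),\ \mathbb F_r(B)$ and the deck-transformation action across isomorphically, yielding $\delta^f_r=\delta^{f_K}_r$ and $\hat\delta^f_r=\hat\delta^{f_K}_r$ up to the canonical identification. No gaps; the bookkeeping you flagged is indeed routine because everything is a product with the contractible $K$ and the section is $\mathbb Z$-equivariant.
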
 
The statement follows in a straightforward manner from  the definitions of $\delta^f_r$ and $\hat\delta^f_r.$
\vskip .2in

Denote by $I^\infty$ the product of countable many copies of $I=[0,1]$ and write $I^{\infty}= I^k\times I^{\infty-k}.$ 

\begin{theorem} (Chapman, Edwards)  \cite{CH}  \label {TT}
\begin{enumerate}
\item
If $X$ is a compact ANR then $X\times I^\infty$ is a compact Hilbert cube manifold, i.e. locally homeomorphic to $I^\infty.$
\item Any  compact Hilbert cube manifold $M$  is homeomorphic to $K\times I^{\infty}$ for some $K$ finite simplicial complex.
\item If $K$ is a finite simplicial complex, $f:K\times I^{\infty} \to \mathbb S^1$ a continuous map and $\epsilon >0$ then there exists an $n$ and 
$g: K\times I^n\to \mathbb S^1$ a p.l-map such that $|| f- g_{I^{\infty-n}}|| <\epsilon$
\end{enumerate}
\end{theorem}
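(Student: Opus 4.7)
The plan is to treat the three items separately, since they correspond to three classical results in infinite-dimensional topology (Edwards, Chapman, and a standard p.l.\ approximation lemma) whose proofs have rather different flavors. The first two are genuinely deep theorems that I would ultimately cite, but I will indicate the strategy of each; the third is an elementary compactness-plus-density argument that I would write out in full.

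For item 1 (Edwards' theorem), the cleanest modern route is via Toru\'nczyk's characterization: a compact ANR $Y$ is a Hilbert cube manifold iff it has the \emph{disjoint disks property}, namely any two maps of a disk into $Y$ can be approximated by maps with disjoint images. I would verify this property for $Y=X\times I^{\infty}$ by exploiting the abundant room in the $I^{\infty}$ factor: given $f_{1},f_{2}:\mathbb{D}^{2}\to X\times I^{\infty}$ and $\varepsilon>0$, pick $N$ so large that adjusting only the coordinates of index $>N$ moves images by less than $\varepsilon$, and arrange these adjustments to place the $I^{\infty}$-tails of $f_{1}(\mathbb{D}^{2})$ and $f_{2}(\mathbb{D}^{2})$ into disjoint sub-boxes. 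Combined with the observation that $X\times I^{\infty}$ is a compact ANR (product of ANRs), this yields the Hilbert cube manifold structure.

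For item 2 (Chapman's triangulation theorem), the strategy is to produce a handle decomposition of the compact $Q$-manifold $M$. One uses the stability property $M\times I^{\infty}\cong M$ together with an engulfing argument to build, layer by layer, a sequence of finite-dimensional handle attachments modelled on products of ordinary handles with $I^{\infty}$. The combinatorial pattern of these attachments is encoded by a finite simplicial complex $K$, and an absorption argument then yields a homeomorphism $M\cong K\times I^{\infty}$. The main obstacle here is the engulfing step, which is the technical heart of Chapman's original work, and is where I would invoke the citation rather than reproduce the argument.

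For item 3, the argument is elementary and I would write it out in full. Given $f:K\times I^{\infty}\to \mathbb{S}^{1}$ and $\varepsilon>0$, uniform continuity of $f$ on the compact metric space $K\times I^{\infty}$ (in the product metric $d((x,s),(x',s'))=d_{K}(x,x')+\sum_{i\geq 1} 2^{-i}|s_{i}-s'_{i}|$) furnishes an integer $n$ such that any two points agreeing on the first $n$ $I$-coordinates have $f$-values within $\varepsilon/2$. Fixing any base point in $I^{\infty-n}$ produces a continuous $\tilde g:K\times I^{n}\to \mathbb{S}^{1}$ with $\|f-\tilde g_{I^{\infty-n}}\|_{\infty}<\varepsilon/2$. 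On the finite polyhedron $K\times I^{n}$, classical p.l.\ approximation of circle-valued maps (cover by small subcomplexes on each of which $\tilde g$ lifts to $\mathbb{R}$, p.l.\ approximate the local lifts, then descend) yields a p.l.\ map $g:K\times I^{n}\to \mathbb{S}^{1}$ within $\varepsilon/2$ of $\tilde g$, and the triangle inequality gives $\|f-g_{I^{\infty-n}}\|<\varepsilon$ as required.
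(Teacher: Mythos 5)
The paper does not prove this theorem at all: it is stated as an attribution to Chapman and Edwards and used as a black box, with the Chapman CBMS monograph listed in the bibliography as the reference. So your proposal is doing work the paper never intended to do, and the right reading of the statement in context is ``cite Chapman's \emph{Lectures on Hilbert cube manifolds} for items 1 and 2, and observe item 3 by a routine approximation.'' With that understood, the content of your sketch is essentially sound, and your item 3 argument (uniform continuity to reduce to finitely many $I$-coordinates, then p.l.\ approximation of a circle-valued map on the finite polyhedron $K\times I^n$, then the triangle inequality) is a complete and correct proof, in the same spirit as what the paper clearly has in mind.

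One point worth correcting in item 1: the property you quote, that any two maps of a $2$-disk can be approximated by maps with disjoint images, is the disjoint disks property that characterizes finite-dimensional manifolds of dimension at least five; Toru\'nczyk's characterization of Hilbert cube manifolds is strictly stronger and requires the disjoint $n$-cells property for every finite $n$ (equivalently, approximability of arbitrary pairs of maps from finite-dimensional compacta by maps with disjoint images). Your actual verification for $Y=X\times I^\infty$, pushing the two images into disjoint tails of the $I^\infty$ factor, works uniformly in $n$, so the argument you describe does prove the correct, stronger condition; it is only the statement of the criterion that should be amended. For item 2 your outline is the standard handle-decomposition route and correctly identifies the engulfing step as the place where one must fall back on Chapman's original work.

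Two minor notational remarks. The norm $\|f-g_{I^{\infty-n}}\|$ in the statement should be read using the intrinsic metric $d$ on $\mathbb S^1$ (the one defined at the start of Section 2), since $f-g$ is not literally defined for $\mathbb S^1$-valued maps; your proof implicitly uses this, as it must in order to lift locally to $\mathbb R$ before p.l.\ approximating. Also, the intermediate map you call $\tilde g$ depends on a choice of basepoint of $I^{\infty-n}$; for the later application in the paper it is cleaner to take $\tilde g(x)=f(x,0,0,\dots)$ so that $g_{I^{\infty-n}}$ is exactly a composition with the coordinate projection, matching the paper's notation $f_K=f\circ\pi_X$.
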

\vskip .1in

A proof of Items 1 and 2 can be found in  \cite{CH}. Item 3. is a rather straightforward consequence of the compacity of $K\times I^\infty$ and the approximation of continuous maps by p.l- maps  when the source is a finite simplicial complex (for more details  
cf. Proposition 6.5 in \cite {B}).

We proceed now with the verification of Theorem\ref{TT2} for $X$ a finite simplicial complex. 
 
In view of Observations 2.1, 2.2  and  2.3 the proof of item 1.  is the same as of Theorem 4.2 in \cite {B1} provided we replace
 $f :X\to \mathbb R,$  by   $\tilde f: \tilde X\to R$ a lift of $f:X\to \mathbb S^1$ representing $\xi.$  The basic ingredient,
Proposition 3.16, (based on Lemma 3.17 and Lemma 3.18)  in \cite{B1} holds for $h:Y\to \mathbb R,$ $Y$ a locally compact ANR and $h$ a proper map,  instead of  $f :X\to \mathbb R,$ continuous with $X$ compact. 

For the reader's convenience  we restate this proposition in the way it will to be used but  beforehand we introduce the  notation $$D(a,b; \epsilon):= B(a+\epsilon, b+\epsilon; 2\epsilon)= (a-\epsilon, a+\epsilon]\times ((b-\epsilon, b+\epsilon].$$  
\begin{proposition}\label {P316}

Let $f:X\to \mathbb S^1$ be a  tame map and $\epsilon <\epsilon(f)/3.$ For any  map $g:X\to \mathbb S^1$  which satisfies  $|| f- g ||_\infty <\epsilon$ and $a,b$ critical values  of a lift $\tilde f:\tilde X \to \mathbb R$ of $f$ one has:

\begin{equation}\label{E3}
 \quad   \sum_{x\in D(a,b;2\epsilon)} \delta^{\tilde g}_r(x)=  \delta ^{\tilde f}_r(a,b), 
\end{equation}

\begin{equation} \label{E40}
 \quad \quad  \supp \  \delta^{\tilde g}_r \subset \bigcup  _{(a,b)\in \supp\ \delta^{\tilde f}_r} D(a,b;2\epsilon) 
\end{equation}
when $\tilde g:\tilde X\to \mathbb R$ is any  lift of $g.$ 

Moreover if $\kappa= \mathbb C$ and  $H_r(\tilde X)$ is equipped with a Hermitian scalar product the above statement can be strengthen to 
\begin{equation}\label{E5}
 x\in D(a,b;2\epsilon) \Rightarrow \hat \delta^{\tilde g}_r(x)\subseteq {\hat \delta} ^{\tilde f}_r(a,b), 
 \  \bigoplus _{x\in D(a,b;2\epsilon)} \hat \delta^{\tilde g}_r(x)= \hat \delta^{\tilde f}_r(a,b).
 \end{equation}
with  $\hat {\delta}^{\tilde f}_r(x)\perp \hat {\delta}^{\tilde f}_r(y).$
Here $\hat {\delta}^{\tilde f }_r(x)$ or $\hat {\delta}^{\tilde g }_r(x)$ denotes the orthogonal complement of ${\mathbb F'}^{\tilde f }_r (D(a,b;\epsilon))$ in ${\mathbb F}^{\tilde f}_r(a,b), $ or of ${\mathbb F'}^{\tilde g }_r (D(a,b;\epsilon))$ in ${\mathbb F}^{\tilde g}_r(a,b),$\
for $\epsilon$ small enough. 
\end{proposition}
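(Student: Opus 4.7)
The plan is to reduce the statement to a theorem about real-valued maps on the infinite cyclic cover, where it is precisely Proposition~3.16 of \cite{B1}. By Observation~\ref{O21}(2), the hypothesis $D(f,g)<\epsilon<\pi$ guarantees that for any fixed lift $\tilde f$ of $f$ there exists a lift $\tilde g$ of $g$ with $D(\tilde f,\tilde g)=D(f,g)<\epsilon$; so $\tilde f,\tilde g:\tilde X\to\mathbb R$ are two $\epsilon$-close proper maps on a locally compact ANR. The argument of \cite{B1}, Proposition~3.16, uses only properness of the real-valued maps and local compactness of the source (not compactness of the domain), so it transfers verbatim via the same Mayer--Vietoris / proper-domination trick used in Proposition~\ref{P1}(3) above. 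I sketch the main steps below.

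The first ingredient is the sandwich
\[
\mathbb F^{\tilde f}_r(a-\epsilon,b+\epsilon)\subseteq \mathbb F^{\tilde g}_r(a,b)\subseteq \mathbb F^{\tilde f}_r(a+\epsilon,b-\epsilon),
\]
which follows at once from the four inclusions of sub- and superlevel sets produced by $\|\tilde f-\tilde g\|_\infty<\epsilon$. Tameness of $\tilde f$ together with $\epsilon<\epsilon(f)/3$ implies that the critical values of $\tilde f$ are $\epsilon(f)$-separated and that the assignment $(a',b')\mapsto \mathbb F^{\tilde f}_r(a',b')$ is locally constant off the critical grid. Consequently, whenever $B$ is a box whose corners lie more than $\epsilon$ away from every critical value of $\tilde f$, the sandwich collapses to $\mathbb F^{\tilde g}_r(B)=\mathbb F^{\tilde f}_r(B)$. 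From this, (\ref{E40}) is immediate: any $x$ outside $\bigcup_{(a,b)\in\supp\delta^{\tilde f}_r}D(a,b;2\epsilon)$ admits arbitrarily small boxes of this type, forcing $\delta^{\tilde g}_r(x)=0$. The count (\ref{E3}) then follows by choosing a box $B_{(a,b)}$ just larger than $D(a,b;2\epsilon)$ that contains $(a,b)$ as its unique critical pair of $\tilde f$ and applying Proposition~\ref{P3}(2) to both sides:
\[
\sum_{x\in D(a,b;2\epsilon)}\delta^{\tilde g}_r(x)=\dim \mathbb F^{\tilde g}_r(B_{(a,b)})=\dim \mathbb F^{\tilde f}_r(B_{(a,b)})=\delta^{\tilde f}_r(a,b).
\]

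The main obstacle is the Hermitian refinement (\ref{E5}). Here I would define $\hat\delta^{\tilde g}_r(x)$ as the orthogonal complement of $\mathbb F'^{\tilde g}_r(B(x;\epsilon'))$ inside $\mathbb F^{\tilde g}_r(x)$ for $\epsilon'$ sufficiently small, and verify that, as $x$ varies in $D(a,b;2\epsilon)$, these subspaces (i) sit inside $\mathbb F^{\tilde f}_r(a,b)$ by the sandwich, (ii) together exhaust $\hat\delta^{\tilde f}_r(a,b)$, by the dimension count already established in (\ref{E3}), and (iii) are pairwise orthogonal. The pairwise orthogonality is the subtle step: for distinct $x=(\alpha,\beta)$ and $x'=(\alpha',\beta')$ in $D(a,b;2\epsilon)$ at least one coordinate must differ, and choosing an intermediate level separates the two orthogonal complements, placing one inside an $\mathbb I^{\tilde f}_{\cdot}(r)$ (or $\mathbb I^{\cdot}_{\tilde f}(r)$) and the other inside its Hermitian complement. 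This is the content of Lemmas~3.17--3.18 of \cite{B1}, which proceed by an iterated two-critical-value argument over a sufficiently fine subdivision of $D(a,b;2\epsilon)$. Since each such lemma is local in $(a,b)$-coordinates and uses only the sandwich and local constancy established above, the angle-valued case on $\tilde X$ requires no additional input beyond the lift construction.
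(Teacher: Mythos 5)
Your proposal is correct and mirrors the paper's own treatment exactly: the paper also justifies Proposition~\ref{P316} by observing that Proposition~3.16 (with Lemmas~3.17--3.18) of \cite{B1} depends only on properness of the real-valued map and local compactness of the domain, and hence applies verbatim to a lift $\tilde f:\tilde X\to\mathbb R$ chosen via Observation~\ref{O21}(2). The sandwich $\mathbb F^{\tilde f}_r(a-\epsilon,b+\epsilon)\subseteq\mathbb F^{\tilde g}_r(a,b)\subseteq\mathbb F^{\tilde f}_r(a+\epsilon,b-\epsilon)$, the resulting collapse on boxes whose corners avoid the $\epsilon$-neighborhood of the critical grid, and the dimension count via Proposition~\ref{P3}(2) are precisely the mechanism behind \cite{B1} Proposition~3.16, so your sketch adds welcome detail without diverging from the intended route.
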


The steps in the proof of Item 1. of Theorem \ref{TT2} are similar to the steps described in subsection 4.2 in \cite{B1}. We  summarize them below.  
 
\begin{enumerate}
\item  For a pair $(X,\xi),$ $X$ compact ANR, 
let $C_\xi(X, \mathbb S^1)$  denote the set of maps in the homotopy class defined by $\xi$ equipped with the compact open topology.
Note, in view of Observation \ref{O21}, that: 
\begin{enumerate}
\item the compact open topology  is  induced from the complete metric   
$D(f,g)$ and $D(f,g)= D(\tilde f, \tilde g)$  for appropriate liftings. 
 \item for $f,g\in  C_\xi (X,\mathbb S^1)$ 
 and any  sequence $0=t_0 <t_1 \cdots t_{N-1} <t_N=1,$ by Observation \ref {O21} item 3.,
the canonical homotopy $f_t$  from $f$ to  $g,$  i.e. $f_0= f, f_1= g,$  satisfies 
\begin{equation}\label {E0}
D(f,g)\geq  \sum _{0\leq i <N} D(f_{t_{i+1}}, f_{t_i}).
\end{equation}  
\end{enumerate}
\item For  $X$ is a simplicial complex let  $\mathcal U\subset  C_\xi (X,\mathbb S^1)$ be the subset  of p.l. maps. One can verify that:
\begin{enumerate}
\item $\mathcal U$ is a dense subset in $C_\xi (X,\mathbb S^1),$
\item  if $f,g\in \mathcal U$  with $D(f,g)<\pi$ 
then, for the canonical homotopy $f_t$ each $f_t \in \mathcal U,$ hence  $ \epsilon(f_t) >0.$ Then for any $t\in[0,1]$ there exists $\delta (t)>0$ s.t.
$t',t''\in (t-\delta(t), t+\delta(t))$ implies 
$D(f_{t'}, f_{t''})<\epsilon (f_t)/3.$
\end{enumerate}
Both statements (a) and (b) are argued as  in \cite{B1}.
\item Consider the space of configurations $Conf_{b_r}(\mathbb T),$ $b_r= \beta^N_r(X;\xi)$ viewed as $S^{b_r}(\mathbb T),$ the $b_r$ fold symmetric product of $\mathbb T$ 
equipped with the induced metric, $\underline D$,  
which is  complete.
Since any map in $\mathcal U$ is tame,  in view 
 Proposition (3.16) in \cite{B1},  $f, g \in \mathcal U$ with $D(f,g) <\epsilon(f)/3$ imply
\begin{equation}\label {EE14}
\underline D(\delta^f_r, \delta^g_r) \leq 2 D(f,g).
\end{equation} 
\end{enumerate}
This suffices to conclude the continuity of the assignment $f\rightsquigarrow \delta^f_r.$
\vskip .1in

To finalize the proof of Theorem (\ref{TT2}) item 1. we check 
first  (step 1.) that the inequality (\ref{EE14}) extends to all $f,g \in \mathcal U,$   
second (step 2.) that the inequality (\ref{EE14}) extends to all $f,g \in C_\xi (X,\mathbb S^1)$  for $X$ a finite simplicial complex, 
third (step 3.) that the inequality (\ref{EE14}) extends to all $f,g \in C_\xi (X,\mathbb S^1)$  for $X$ an arbitrary compact ANR. 

\begin {itemize}
\item {\it Step 1.:}

Start with $f,g\in \mathcal U$  and consider the canonical homotopy  $\tilde f_t= t \tilde f +(1-t)\tilde g,$ $t\in [0,1]$ between two lifts  lifts $\tilde f, \tilde g $ of $f$ and $g$ which satisfy $D(f,g)= D(\tilde f, \tilde g).$   Note that each $\tilde f_t$ satisfies $\tilde f_t(\mu(n,x))= \tilde f_t(x) + 2\pi n,$  hence is a lift of a p.l. map $f_t.$ 

Choose a sequence $0<t_1< t_3 <t_5,\cdots t_{2N-1} <1$  
such that 
for $i=1,\cdots, (2N-1)$ the intervals $(t_{2i-1}-\delta(t_{2i-1}), t_{2i-1}+\delta(t_{2i-1})),$ with $\delta(t_i)$ as in 2.(b), cover $[0,1]$ and 
$(t_{2i-1},t_{2i-1}+\delta(t_{2i-1}))\cap  (t_{2i+1}-\delta(t_{2i+1}), t_{2i+1})\ne \emptyset.$    This is possible in view of the compacity  of $[0,1].$ 

Take $t_0=0, t_{2N}=1$ and $t_{2i}\in (t_{2i-1},t_{2i-1}+\delta(t_{2i-1}))\cap  (t_{2i+1}-\delta(t_{2i+1}), t_{2i+1}).$ 
To simplify the notation abbreviate  $ f_{t_i}$ to $f_i.$
In view of  2.  and  3. above (inequality (\ref{EE14}))  one has: 

$|t_{2i-1}- t_{2i}|<\delta(t_{2i-1})$ implies  $\underline D(\delta^{f_{2i-1}},\delta^{f_{2i}})< 2D(f_{2i-1}, f_{2i})$ and 

$|t_{2i}- t_{2i+1}|<\delta(t_{2i+1})$ implies  $\underline D(\delta^{f_{2i}},\delta^{f_{2i+1}} )< 2D(f_{2i}, f_{2i+1})$
\newline Then we have $$\underline D(\delta^f, \delta^g) \leq \sum_{0\leq i<2N-1} \underline D(\delta^{f_{i}}, \delta^{f_{i+1}})  \leq 2\sum_{0\leq i<2N-1}  D(f_i, f_{i+1})\leq 2 D(f,g)$$ in view of Observation \ref{O21} item 3. 

\item  {\it Step 2.:} Suppose $X$ is a simplicial complex. In view of the density of $\mathcal U$  and of the completeness of the metrics on $C_\xi(X;\mathbb S^1)$ and $Conf_{b_r}(\mathbb T),$ the inequality (\ref {EE14}) extends to the entire $C_\xi(X;\mathbb S^1).$ 
Indeed the assignment $\mathcal U\ni f\rightsquigarrow \delta^f_r\in C_{b_r}(\mathbb R^2)$ preserve the Cauchy sequences.

\item {\it Step 3.:} We verify the inequality (\ref {EE14}) for $X= K\times I^\infty,$ $K$ simplicial complex and $I^\infty$ the Hilbert cube. One proceed exactly as in \cite{B1}.
Since by Theorem \ref{TT2} item 2 any compact Hilbert cube manifold is homeomorphic to $K\times I^\infty$ for some finite simplicial complex $K$, the inequality (\ref {EE14})
 continues to hold. 
 Since for $X$ a compact ANR 
   
(i)  $X\times I^\infty$ is a Hilbert cube manifold by Theorem \ref{TT2} item 1, 

(ii)  $I: C(X;\mathbb R)\to C(X\times I^\infty;\mathbb R)$ defined by $I(f)= \overline f_{I^\infty}$ is an isometric embedding and 

(iii)  $\delta^f= \delta^{\overline f_{I^\infty}},$  

the inequality (\ref{EE14}) holds for 
 $X$  a compact ANR.
\end{itemize}
\vskip .1in 

To check item 2. in Theorem \ref{TT2} we begin with a few observations.
If $\kappa= \mathbb C,$ a Riemannian metric on a closed smooth manifold $ M^n=X,$ or a triangulation of a compact ANR  $X,$   
provides a Hermitian scalar product on $H_r(\tilde X)$ 
invariant to the action of the group of deck transformations  of the covering $\tilde X\to X.$ Ultimately this provides a $\mathbb C[t^{-1},t]-$compatible Hermitian inner product on $H^N_r(X;\xi)$ and then a collection of compatible $N-$splittings, $^N i_r(a,b)$ $(a,b)\in \mathbb R^2,$ and then the collection of compatible $N-$splittings, $^Ni_r\langle a, b\rangle,$ $\langle a, b\rangle\in \mathbb T,$  for both $f$ and $g.$  The images of these splittings are the free 
submodules ${\hat \delta}^f$ and ${\hat \delta}^g.$ 
In view of Proposition \ref{P316} for a given $f,$ $(a,b)\in Cr(f)\times Cr(f),$ $\epsilon <\epsilon (f)$  and any $g$ with $||g- f||_{\infty}< \epsilon/3$  the following two subspaces of spaces of $H^N_r(X;\xi),$
$$\sum _{(a',b')\in D(a,b;\epsilon)\cap \supp \delta^{\tilde g}} \ ^N i_r\langle a', b'\rangle (\hat \delta^g\langle a', b'\rangle)\ \rm {and} \  \  ^N i_r\langle a, b\rangle (\hat \delta^f\langle a, b\rangle),$$  are equal. 

The $\mathbb C[t^{-1},t]-$compatibility permits to pass to von Neumann completions and derive the collection of Hilbert submodules $\hat{\hat \delta}^f$ and $\hat{\hat \delta}^g$ 
which under the above hypotheses satisfy $$\sum _{(a',b')\in D(a,b;\epsilon)\cap \supp \delta^{\tilde g}} 
\hat{\hat \delta}^g\langle a', b'\rangle)=
\hat{\hat \delta}^f\langle a, b\rangle.$$ 
This implies the continuity of the assignment $f\rightsquigarrow \hat{\hat \delta}^f_r$ when the space of configurations is equipped with the fine topology and then with the natural topology, hence Item 2. of Theorem \ref{TT2}.

\section{ Proof of  Theorem \ref {TT3}}\label {S5}

  We prove Theorem \ref{TT3}  for weakly tame maps $f:M\to \mathbb S^1,$ $M$ closed topological manifold of dimension $n,$ whose set of non topological regular values is finite.
 If the set of such maps is dense in the set of all maps equipped with compact open topology then, in view of Theorem \ref{TT2}, the result holds for all continuous maps. One expects  this be always the case. When the manifold is homeomorphic to a finite simplicial complex this is indeed the case  since  a p.l map is weakly tame and has finitely many critical values  and the the set of p.l. maps is dense in the set of  all continuous maps with compact open topology. For the manifolds which have no triangulation a possible argument  for such density is considerably longer and will not be provided in this paper. We were unable to locate a reference in the literature.  
 For $f$ a weakly tame map it will suffices to consider only regular values $a,b$. This because for arbitrary pairs $c,c'$ one can find  $\epsilon  >0$ small enough so that for $a'= c-\epsilon, a= c+\epsilon, b= c'-\epsilon, b'=c+\epsilon, B=(a',a]\times [b, b')$ in view of Proposition \ref{P3} one has 
 \begin{equation*}
 \begin{aligned}
 \hat \delta_r^{\tilde f}(c,c')=  \hat \delta_r^{\tilde f}(a,b) = \mathbb F^{\tilde f}(B)
 \hat \delta_r^{\tilde f}(c',c)=  \hat \delta_r^{\tilde f}(b',a')= \mathbb F^{\tilde f}(B').
 \end{aligned}
 \end{equation*}   
  
The proof  of Theorem \ref{TT3} requires some additional notations and considerations.

\vskip .1in
{\it Some additional notation and definitions}
 
 Recall that a topologically regular value is a value $s$ which has a neighborhood $U$ s.t. $f: f^{-1}(U)\to U$ is a topological bundle. If so any lift (infinite cyclic cover) $\tilde f: \tilde M\to \mathbb R$ of $f$ has the set of critical values discrete and 
$2\pi-$periodic.

We use the notations: 
\begin{enumerate}
\item 
$\tilde M_a:= \tilde f^{-1}((-\infty, a]),$  \quad  $\tilde M^a:= \tilde f^{-1}([a, \infty)), a\in \mathbb R,$
\item 
$\mathbb I_a(r) :=\img (H_r(\tilde M_a) \to H_r(\tilde M)),$ \quad $\mathbb I^a(r) :=\img (H_r(\tilde M^a) \to H_r(\tilde M)),$ 
\item  $\mathbb F^{\tilde f}_r(a,b)= \mathbb I^{\tilde f}_a\cap \mathbb I^b_{\tilde f}$, \quad $i_r(a,b):  \mathbb F^f_r(a,b) \subset  H_r(\tilde M)$ the  inclusion.

\noindent In addition consider :
\item 
$\mathbb G^{\tilde f}_r(a,b):= H_r(\tilde M)/ (\mathbb I^{\tilde f}_a + \mathbb I^b_{\tilde f}),$ \quad 
$p_r(a,b): H_r(\tilde M)\to \mathbb G^{\tilde f}_r(a,b)$ the  canonical projection.
\item For a box $B= (a',a]\times [b,b')$ denote by:
 \begin{enumerate}
\item $\mathbb F^{\tilde f}_r(B):= \coker (\mathbb F^{\tilde f}_r(a',b)\oplus \mathbb F^{\tilde f}_r(a,b')\to \mathbb F^{\tilde f}_r(a,b))$ and $\pi^B_r: \mathbb F^{\tilde f}_r(a,b)\twoheadrightarrow \mathbb F^f_r(B)$ the canonical surjection,   
\item 
$\mathbb G^{\tilde f}_r(B):= \ker (\mathbb G^{\tilde f}_r(a',b')\to \mathbb G^{\tilde f}_r(a',b)\times_{\mathbb G^{\tilde f}_r(a,b)} \mathbb G^{\tilde f}_r(a,b'),$  and  $u^B_r: \mathbb G^{\tilde f}_r(B)
\rightarrowtail \mathbb G^{\tilde f}_r (a',b')$ the canonical inclusion.   
\item Since $\mathbb F^{\tilde f}_r(B)$ identifies canonically to  $\mathbb I_a(r)\cap \mathbb I^b(r) / (\mathbb I_{a'}(r)\cap \mathbb I^b(r) +\mathbb I_a(r)\cap \mathbb I^{b'}(r))$  and $\mathbb G^{\tilde f} (B) $ identifies canonically to $(\mathbb I_{a'}(r)+ \mathbb I^b(r)) \cap (\mathbb I_{a}(r)+ \mathbb I^{b'}(r))/ (\mathbb I_{a'}(r)+ \mathbb I^{b'}(r))$  then the inclusion 
\newline $\mathbb I_a(r)\cap \mathbb I^b(r)   \subseteq (\mathbb I_{a'}(r)+ \mathbb I^b(r)) \cap (\mathbb I_{a}(r)+ \mathbb I^{b'}(r))$
induces the  linear map 
\newline $\theta_r(B): \mathbb F^{\tilde f}_r(B)\to \mathbb G^{\tilde f}_{r}(B)$  which is an isomorphism. 
\end{enumerate} For a verifications one can consult \cite {B1} Proposition 4.7.      
\end{enumerate}

If $a$ is a topologically regular value  then $\tilde M_a$ and $\tilde M^a$ are manifolds with compact boundary $f^{-1}(a)$ and  denote by

\begin{equation}\label{E23}
\begin{aligned}
H^{BM}_r(\tilde M)=&\varprojlim_{\tiny 0< l, t\to \infty} H_r(\tilde M, \tilde M_{-l}\sqcup \tilde M^t),\\
H^{BM}_r(\tilde M_a)=&\varprojlim_{\tiny 0< l \to \infty} H_r(\tilde M_a, \tilde M_{a-l}),\\
H^{BM}_r(\tilde M^a)=&\varprojlim_{0< l\to \infty} H_r(\tilde M, \tilde M^{a+l}),\\
H^{BM}_r(\tilde M, \tilde M_a)=&\varprojlim_{0< l\to \infty} H_r(\tilde M, \tilde M_a\sqcup \tilde M^{a+l}),\\
H^{BM}_r(\tilde M, \tilde M^a)=&\varprojlim_{0< l\to \infty} H_r(\tilde M, \tilde M^a\sqcup \tilde M_{a-l}).
\end{aligned}
\end{equation}

The reader will recognize on  left side of the equalities (\ref{E23}) the notation for the Borel-Moore homology vector spaces with coefficients in $\kappa,$ the right homology to extend the Poincar\'e Duality from compact manifolds  to arbitrary finite dimensional manifolds. 
\vskip .1in
{\it Poincar\'e duality diagrams for $\tilde M$}

One has the following commutative diagrams whose vertical arrows are isomorphisms, diagrams referred below as the Poincar\'e Duality diagrams for non closed  manifolds.
\vskip .2in 
\begin{equation}\label{PD1}
\xymatrix{&H^{BM}_r( \tilde M_a)\ar[d]^{PD^1_a}\ar[rr]^{i_a(r)}  \quad &&H^{BM}_r( \tilde M)\ar[d]^{PD}\ar[rr]^{j_a(r)} &&H^{BM}_r( \tilde M, \tilde M_a)\ar[d]^{PD^2_a}&\\
&H^{n-r}( \tilde M, \tilde M^a)\ar[d]^{=} \ar[rr]^{s^a(n-r)} \quad&&H^{n-r}( \tilde M)\ar[d]^{=} \ar[rr]^{r^a(n-r))} &&H^{n-r}( \tilde M^a)\ar[d]^{=}&\\
&(H_{n-r}( \tilde M, \tilde M^a))^\ast \ar[rr]^{(j^a(n-r))^\ast} \quad&&H_{n-r}( \tilde M)^\ast \ar[rr]^{(i^a(n-r))^\ast} &&H_{n-r}( \tilde M^a)^\ast&}
\end{equation}
\vskip .2in
\begin{equation}\label {PD1'}
\xymatrix
{&H^{BM}_r( \tilde M^b)\ar[d]^{PD_1^b} \ar[rr]^{i^b(r)} \quad &&H^{BM}_r( \tilde M)\ar[d]^{PD}\ar[rr]^{j^{b}(r)} &&H^{BM}_r( \tilde M, \tilde M^b)\ar[d]^{PD_2^b}&\\
&H^{n-r}( \tilde M, \tilde M_b)\ar[d]^{=} \ar[rr]^{s_b(n-r)}  \quad &&H^{n-r}( \tilde M)) \ar[d] ^{=}\ar[rr]^{r_b(n-r)} &&H^{n-r}( \tilde M_b)\ar[d]^{=}\\
&(H_{n-r}( \tilde M, \tilde M_b))^\ast \ar[rr]^{(j_b(n-r))^\ast} \quad &&(H_{n-r}( \tilde M))^\ast \ \ar[rr]^{(i_b(n-r))^\ast} &&(H_{n-r}( \tilde M_b))^\ast &.}
\end{equation}

One can derive these diagrams from the Poincar\'e duality for compact bordisms   
$(\tilde f^{-1}[a,b], \tilde f^{-1}(a),f^{-1}(b)))$ when $a$ and $b$ are topologically regular values, by passing to the limit  $a\to -\infty$ or $b\to \infty$ with no knowledge about Borel--Moore homology.

The Poincar\'e Duality isomorphism, $$\xymatrix {PD^{BM}_r : H^{BM}_r( \tilde M)\ar[r]^-{PD_r}& H^{n-r}( \tilde M)\ar[r]^{=}&(H_{n-r}( \tilde M))^\ast} $$ we consider below is the composition of the vertical arrows in the middle of diagram (\ref{PD1}) or (\ref{PD1'}).

Note that all three terms of this sequence are $\kappa[t^{-1},t]-$modules and the two arrows are $\kappa[t^{-1},t]-$linear with the multiplication by $t$ given the the linear isomorphism induced by the deck transformation $\tau_r.$

If one uses $H^{BM}_r(\cdots)$ instead of $H_r(\cdots)$  one can also consider  $ ^{BM}\mathbb F^{\tilde f}_r(a,b),$   $ ^{BM}\mathbb F^{\tilde f}_r(B)$ and $ ^{BM}\hat \delta^{\tilde f}_r(a,b)$ instead of $ \mathbb F^{\tilde f}_r(a,b),$ $\mathbb F^{\tilde f}_r(B)$ and $\hat \delta^{\tilde f}_r(a,b).$ Proposition  \ref{P52} item 3. below will show that $^{BM}\mathbb F^{\tilde f}_r(B)$ and $^{BM}\hat \delta^{\tilde f}_r(a,b)$ are  canonically isomorphic to $\mathbb F^{\tilde f}_r(B)$ and $\hat \delta^{\tilde f}_r(a,b).$
\vskip .1in
{\it Intermediate results}

With the definitions already given one has the following proposition. 

\begin{proposition}\ \label {P51}
\begin{enumerate}
\item  For any $a,b$ regular values of $\tilde f$ the Poincar\'e Duality isomorphism restricts to an isomorphism 
$$PD^{BM}_r (a,b) : ^{BM}\mathbb F^{\tilde f}_r(a,b)\to (\mathbb G^{\tilde f}_{n-r}(b,a))^\ast.$$ 
\item For any box $B= (a',a]\times [b,b')$ and $B'= (b,b']\times [a',a))$  with all $a, a', b, b'$ regular values $PD^{BM}_r$  induces the isomorphisms $PD^{BM}_r(a,b),$ $PD^{BM}_r (B),$ 
making the diagram below commutative.
\end{enumerate} 
\end{proposition}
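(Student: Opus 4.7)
The strategy is a diagram chase using the two Poincar\'e Duality diagrams (PD1) and (PD1'), translating the intersection of images defining $^{BM}\mathbb F^{\tilde f}_r(a,b)$ into an intersection of annihilators on the cohomology side.

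For item 1, the plan is as follows. By definition $^{BM}\mathbb F^{\tilde f}_r(a,b) = {}^{BM}\mathbb I_a(r) \cap {}^{BM}\mathbb I^b(r)$ sits inside $H^{BM}_r(\tilde M)$ as the intersection of the images of $i_a(r)$ and $i^b(r)$. The commutativity of (PD1) identifies, under $PD^{BM}_r$, the image of $i_a(r)$ with the image of $(j^a(n-r))^{\ast}$; by exactness of the dualized sequence $H_{n-r}(\tilde M^a)\to H_{n-r}(\tilde M)\to H_{n-r}(\tilde M,\tilde M^a)$ this image equals $\mathrm{Ann}\bigl(\mathbb I^a(n-r)\bigr)\subseteq H_{n-r}(\tilde M)^{\ast}$. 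The same reasoning applied to (PD1') gives $PD^{BM}_r\bigl({}^{BM}\mathbb I^b(r)\bigr) = \mathrm{Ann}\bigl(\mathbb I_b(n-r)\bigr)$. Since $PD^{BM}_r$ is an isomorphism it carries intersections to intersections, and one invokes the elementary identity $\mathrm{Ann}(U)\cap\mathrm{Ann}(V) = \mathrm{Ann}(U+V)$ to conclude
\[
PD^{BM}_r\bigl({}^{BM}\mathbb F^{\tilde f}_r(a,b)\bigr) \;=\; \mathrm{Ann}\bigl(\mathbb I^a(n-r)+\mathbb I_b(n-r)\bigr) \;=\; \bigl(\mathbb G^{\tilde f}_{n-r}(b,a)\bigr)^{\ast}.
\]

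For item 2, I would argue by naturality once item 1 is in place. For a box $B=(a',a]\times[b,b')$ with all four corners regular, item 1 supplies four Poincar\'e Duality isomorphisms, one at each of $(a,b),(a',b),(a,b'),(a',b')$, and these fit into a commutative square relating $^{BM}\mathbb F^{\tilde f}_r(a',b)+{}^{BM}\mathbb F^{\tilde f}_r(a,b')\hookrightarrow {}^{BM}\mathbb F^{\tilde f}_r(a,b)$ on the homology side with the dual of $\mathbb G^{\tilde f}_{n-r}(b',a')\twoheadrightarrow \mathbb G^{\tilde f}_{n-r}(b,a)\times_{\mathbb G^{\tilde f}_{n-r}(b',a)}\mathbb G^{\tilde f}_{n-r}(b,a')$ on the cohomology side. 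Taking cokernels on the left (by definition of $\mathbb F^{\tilde f}_r(B)$) corresponds, under dualization, to taking kernels on the right (by definition of $\mathbb G^{\tilde f}_{n-r}(B^{\ast})$, where $B^{\ast}$ is the ``flipped'' box with coordinates swapped). The induced map $PD^{BM}_r(B)$ is then an isomorphism by the five-lemma applied to the two short exact sequences of Proposition \ref{P2}.

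The main obstacle I anticipate is purely bookkeeping: carefully matching the sum/intersection structure of the ``filtration pieces'' $\mathbb I_a,\mathbb I^b$ on the Borel--Moore side with the quotient structure of $\mathbb G$ on the cohomology side, at all four corners of the box simultaneously, and then verifying that the resulting commutative square is induced by a single $PD^{BM}_r$ (so that the isomorphism $PD^{BM}_r(B)$ is canonical, independent of choices). This is where one really needs the isomorphism $\theta_r(B):\mathbb F^{\tilde f}_r(B)\to \mathbb G^{\tilde f}_r(B)$ of item 5(c) together with the exact sequences of Proposition \ref{P2} to reconcile the cokernel presentation of $\mathbb F$ with the kernel presentation of $\mathbb G$. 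Once these identifications are in place, the construction of $PD^{BM}_r(a,b)$ and $PD^{BM}_r(B)$ and their compatibility are formal consequences of (PD1) and (PD1').
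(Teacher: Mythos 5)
Your proposal follows essentially the same route as the paper: item 1 is the paper's chain of equalities $\img i_a(r)\cap\img i^b(r)=\ker j_a(r)\cap\ker j^b(r)=\ker(i^a(n-r))^\ast\cap\ker(i_b(n-r))^\ast=(\mathbb G^{\tilde f}_{n-r}(b,a))^\ast$, merely rephrased using annihilators, and item 2 is the paper's observation that the four item-1 isomorphisms carry the square ${}^{BM}\mathcal F(B)$ onto the dual of the square $\mathcal G(B')$, making the cokernel-to-$(\ker)^\ast$ isomorphism immediate (so the five-lemma is not needed). One small bookkeeping slip: in your fibered-product presentation of $\mathbb G^{\tilde f}_{n-r}(B')$ for $B'=(b,b']\times[a',a)$ the source should be $\mathbb G^{\tilde f}_{n-r}(b,a)$, not $\mathbb G^{\tilde f}_{n-r}(b',a')$; this does not affect the argument.
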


\begin{equation} \label {E13}
\xymatrix {^{BM}\mathbb F^{\tilde f}_r(a',b')\ar[d]\ar[rr]^{PD^{BM}_r(a',b')}&&(\mathbb G^{\tilde f}_{n-r}(b',a'))^\ast\ar[d]&&\\
^{BM}\mathbb F^{\tilde f}_r(a,b)\ar[dd]\ar[dr]\ar[rr]^{PD^{BM}_r}&&(\mathbb G^{\tilde f}_{n-r}(b,a))^\ast\ar[dr]^{u^\ast _{n-r}}\ar[dd]^<<<{p_r^\ast}&&\\
&^{BM}\mathbb F^{\tilde f}_r(B)\ar[rr]^{PD^{BM}_r(B)}&&(\mathbb G^{\tilde f}_{n-r}(B'))^\ast\ar[r]^{\theta_{n-r}^\ast}&(\mathbb F^{\tilde f}_{n-r}(B')^\ast\\
H^{BM}_r(\tilde M)\ar[rr]^{PD^{BM}_r}&&(H_{n-r}(\tilde M))^\ast &&}
\end{equation}

\begin{proof}

Item 1.:  In view of diagrams (\ref {PD1}) and (\ref{PD1'}) one has $img \ i_a(r) \cap img \ i^b(r)=\ker j_a(r) \cap \ker j^b(r)\simeq 
\ker (i^a(n-r))^\ast \cap \ker (i_b(n-r))^\ast\simeq (\coker (i_b(n-r) \oplus i^a(n-r))^\ast =( \mathbb G^{\tilde f}_{n-r}(b,a))^\ast.$ The first  equality holds  by exactness of the first rows in the diagrams, the second by the equality of the top and bottom  right horizontal arrows and the third  by linear algebra duality and the fourth by the definition of $\mathbb G_{n-r}.$  

Item 2.: Consider the box $B= (a',a]\times [b,b')$ and denote by $B'$ the box $B'= (b,b']\times [a',a).$
Note that the image of the diagram $$
^{BM}\mathcal F(B):= \begin{cases}\xymatrix{ ^{BM}\mathbb F^{\tilde f}_r (a',b')\ar[r]\ar[d]&^{BM}\mathbb F^{\tilde f}_r (a,b')\ar[d]\\
^{BM}\mathbb F^{\tilde f}_r (a',b)\ar[r]&^{BM}\mathbb F^{\tilde f}_r (a,b)}\end{cases}$$ 

by $^{BM}PD_r $ is the diagram  

$$
\mathcal G(B')^\ast:=
 \begin{cases}\xymatrix{ (\mathbb G^{\tilde f}_{n-r} (b',a'))^\ast \ar[r]\ar[d]&(\mathbb G^{\tilde f}_{n-r} (b',a))^\ast\ar[d]\\
(\mathbb G^{\tilde f}_r (b,a'))^\ast \ar[r]&(\mathbb G^{\tilde f}_{n-r} (b,a))^\ast}\end{cases}$$

which is the dual of the diagram 
$$\mathcal G(B'):= \begin{cases}\xymatrix{ \mathbb G^{\tilde f}_{n-r} (b,a) \ar[r]\ar[d]&(\mathbb G^{\tilde f}_{n-r} (b',a)\ar[d]\\
\mathbb G^{\tilde f}_r (b,a') \ar[r]&(\mathbb G^{\tilde f}_{n-r} (b',a')}\end{cases}$$

Therefore $^{BM}PD_r$ induces an isomorphism from $^{BM}\mathbb F^{\tilde f}_r(B)= \coker ^{BM}\mathcal F(B)$ to $(\ker (\mathcal G(B'))^\ast= (\mathbb G^{\tilde f}_{n-r}(B'))^\ast.$

\end{proof}
\vskip .1in 
From diagram (\ref {E13}) one derives 

\begin{equation} \label {D14}
\xymatrix {^{BM}\mathbb F^{\tilde f}_r(a,b)\ar[dd]\ar[dr]\ar[rr]^{PD^{BM}_r(a,b)}&&(\mathbb G^{\tilde f}_{n-r}(b,a))^\ast\ar[dr]^{u^\ast _{n-r}}\ar[dd]^<<<<{p_r^\ast(a,b)}&&\\
&^{BM}\hat\delta^{\tilde f}_r(a,b)\ar[rr]&&(\hat\delta^{\tilde f}_{n-r}(b,a))^\ast\\
H^{BM}_r(\tilde M)\ar[rr]^{PD^{BM}_r}&&(H_{n-r}(\tilde M))^\ast &&}
\end{equation}

with the horizontal arrows isomorphisms, the vertical arrows injective and the oblique arrows surjective.

Indeed for  $B= (a-\epsilon,a+\epsilon]\times [b-\epsilon, b+\epsilon)$ in which case $B'= (b-\epsilon, b+\epsilon]\times [a-\epsilon, a+\epsilon)$
and $\epsilon$  small enough to have (in view of Proposition 3.4)  $\hat \delta^{\tilde f}_r(a,b)= \mathbb F^{\tilde f}_r(B)$ and $\hat \delta^{\tilde f}_{n-r}(b,a)= \mathbb F^{\tilde f}_r(B')$ 
 the diagram (\ref{E13}) gives rise to  the diagram (\ref{D14}).

\vskip .2in 

The key observation for   finalizing  items 1. and 2. is the following proposition.

\begin{proposition} \label {P52}\ 
The $\kappa-$ linear maps \ $\mathbb F^{\tilde f}_r (a,b)\to ^{BM} \mathbb F^{\tilde f}_r (a,b) :$ 
\begin{enumerate}
\item
are compatible with  the deck transformations,
\item
are surjective, 
\item
have the kernel $C_r(M)$ independent on $(a,b),$ equal to the kernel of the $\kappa[t^{-1},t]-$linear map $H_r(\tilde M)\to H_r^{BM}(\tilde M)$ and equal to $T(H_r(\tilde M)$. 

\end{enumerate}
\end{proposition}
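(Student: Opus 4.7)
The plan is to treat the three items in order, with items 1 and 2 relying on general naturality and a Mayer--Vietoris argument, while item 3 rests on combining the left-exactness of inverse limits with the direct-sum decomposition of $H_r(\tilde M)$ afforded by a compatible splitting.

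Item 1 is essentially formal. The natural transformation $H_r(-) \to H^{BM}_r(-)$ is equivariant under self-homeomorphisms, and $\tau$ sends $\tilde M_a$ to $\tilde M_{a+2\pi}$ and $\tilde M^b$ to $\tilde M^{b+2\pi}$; hence it carries $\mathbb F^{\tilde f}_r(a,b)$ onto $\mathbb F^{\tilde f}_r(a+2\pi,b+2\pi)$ and similarly in Borel--Moore, and the comparison maps intertwine these actions.

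For item 2, I first handle the case $a \ge b$ with both regular. Then $\tilde M_a, \tilde M^b$ form a closed cover of $\tilde M$ (using collars at the regular levels to pass to a homotopy-equivalent open cover) with compact intersection $\tilde f^{-1}[b,a]$. Mayer--Vietoris, applied in singular and in Borel--Moore homology respectively, shows that $\mathbb F^{\tilde f}_r(a,b)$ and $^{BM}\mathbb F^{\tilde f}_r(a,b)$ are precisely the images in $H_r(\tilde M)$ and $H^{BM}_r(\tilde M)$ of $H_r(\tilde f^{-1}[b,a])$ and $H^{BM}_r(\tilde f^{-1}[b,a])$; but these source groups coincide since $\tilde f^{-1}[b,a]$ is compact, so the comparison map is surjective. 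The case $a < b$ is reduced to the previous one: pick a regular $a' \ge b$, use the inclusions $\mathbb F^{\tilde f}_r(a,b) \subseteq \mathbb F^{\tilde f}_r(a',b)$ and $^{BM}\mathbb F^{\tilde f}_r(a,b) \subseteq \,^{BM}\mathbb F^{\tilde f}_r(a',b)$ provided by Proposition~\ref{P1}, and combine surjectivity at $(a',b)$ with the $\hat\delta$-decomposition of Proposition~\ref{P34}(2.a) (applied in both settings) to identify $\mathbb F^{\tilde f}_r(a,b)$ as the preimage of $^{BM}\mathbb F^{\tilde f}_r(a,b)$ inside $\mathbb F^{\tilde f}_r(a',b)$ modulo torsion.

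Item 3 is the heart of the argument and I would break it in two steps. Step~A computes the kernel $C_r(M)$ of the ambient comparison $H_r(\tilde M) \to H^{BM}_r(\tilde M)$. Since $H^{BM}_r(\tilde M) = \varprojlim_{l,t} H_r(\tilde M,\tilde M_{-l}\sqcup \tilde M^t)$ and $\varprojlim$ is left exact, this kernel equals the intersection $\bigcap_{l,t}\ker\!\big(H_r(\tilde M) \to H_r(\tilde M,\tilde M_{-l}\sqcup \tilde M^t)\big) = \bigcap_{l,t}\big(\mathbb I_{-l}(r) + \mathbb I^t(r)\big)$, the last equality coming from the long exact sequences. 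Now fix a family $\mathcal S$ of compatible splittings. Proposition~\ref{P34}(1) gives the internal direct-sum decomposition
$$H_r(\tilde M) \;=\; {}^{\mathcal S}I_r\!\Big(\bigoplus\nolimits_{(\alpha,\beta)} \hat\delta^{\tilde f}_r(\alpha,\beta)\Big)\ \oplus\ T\big(H_r(\tilde M)\big),$$
where I have used Proposition~\ref{P36} to identify $\mathbb I^{\tilde f}_{-\infty}(r)+\mathbb I^{\infty}_{\tilde f}(r)$ with $TH_r(\tilde M)$. Passing to the limit in Proposition~\ref{P34}(2.a) yields $\mathbb I_{-l}(r) = \bigoplus_{\alpha \le -l}\hat\delta^{\tilde f}_r(\alpha,\beta)\oplus T$ and $\mathbb I^t(r) = \bigoplus_{\beta \ge t}\hat\delta^{\tilde f}_r(\alpha,\beta)\oplus T$, whence
$$\mathbb I_{-l}(r)+\mathbb I^t(r) \;=\; \bigoplus\nolimits_{\alpha\le -l\ \text{or}\ \beta\ge t} \hat\delta^{\tilde f}_r(\alpha,\beta)\ \oplus\ T.$$
Any vector in the intersection over all $(l,t)$ has its direct-summand component supported at points $(\alpha,\beta)$ satisfying, for every $(l,t)$, either $\alpha\le -l$ or $\beta\ge t$; taking $(l,t)$ with $l > -\alpha$ and $t > \beta$ rules out each individual point, and since elements of a direct sum have finite support the component must be zero. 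Hence $C_r(M) = T(H_r(\tilde M))$. Step~B is then immediate: the commutative square with injective horizontal inclusions $\mathbb F^{\tilde f}_r(a,b)\hookrightarrow H_r(\tilde M)$ and $^{BM}\mathbb F^{\tilde f}_r(a,b)\hookrightarrow H^{BM}_r(\tilde M)$ shows that the kernel of the left vertical arrow is $C_r(M)\cap \mathbb F^{\tilde f}_r(a,b)$, which equals $C_r(M)$ itself since $T = \mathbb I^{\tilde f}_{-\infty}(r) = \mathbb I^{\infty}_{\tilde f}(r) \subseteq \mathbb I_a(r)\cap \mathbb I^b(r) = \mathbb F^{\tilde f}_r(a,b)$ by Proposition~\ref{P36}. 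This proves the independence of $(a,b)$ and the stated identification. The main obstacle I expect is the bookkeeping in Step~A: translating Proposition~\ref{P34}'s finite-box identities into the limits $\mathbb I_{-l}, \mathbb I^t$ and justifying the finite-support argument that reduces the intersection to $T(H_r(\tilde M))$.
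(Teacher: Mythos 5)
Your approach is sound and in several respects sharper than what the paper records, but one step in item~2 has a circularity that needs to be repaired by reordering.

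Items~1 and~3 are essentially correct and broadly similar in spirit to the paper (both read off the kernel from the projective-limit presentation of $H^{BM}_r(\tilde M)$). Your Step~A, passing to $\ker = \bigcap_{l,t}\big(\mathbb I_{-l}(r)+\mathbb I^t(r)\big)$ and then using the internal direct-sum decomposition from Proposition~\ref{P34}(1) together with the limit form of \ref{P34}(2.a) to show this intersection is $T(H_r(\tilde M))$, actually fills in a point the paper states without elaboration (that the intersection of the sums equals the sum $\mathbb I_{-\infty}(r)+\mathbb I^{\infty}_{\vphantom{f}}(r)$ of the intersections). The finite-support argument for killing each $\hat\delta^{\tilde f}_r(\alpha,\beta)$ in the intersection is correct. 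Your Mayer--Vietoris identification $\mathbb F^{\tilde f}_r(a,b)=\mathrm{img}\big(H_r(\tilde f^{-1}[b,a])\to H_r(\tilde M)\big)$ for $a\ge b$, and its Borel--Moore counterpart, is also correct and gives a cleaner surjectivity argument in that range than the paper's appeal to ``a careful analysis of diagram (\ref{E00}).''

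The gap is in the reduction of item~2 to the case $a\ge b$. You invoke ``Proposition~\ref{P34}(2.a) applied in both settings,'' but an analogue of \ref{P34} for Borel--Moore homology is not available at this point: the paper explicitly defers the identification of $^{BM}\mathbb F^{\tilde f}_r(B)$ and $^{BM}\hat\delta^{\tilde f}_r(a,b)$ with their singular counterparts to Proposition~\ref{P52}(3) itself, so using a BM $\hat\delta$-decomposition here is circular, and using only the singular decomposition at $(a,b)$ and $(a',b)$ does not by itself identify the preimage of $^{BM}\mathbb F^{\tilde f}_r(a,b)$ inside $\mathbb F^{\tilde f}_r(a',b)$. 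The way to close this is to reorder: establish Step~A of item~3 (the identification $C_r(M)=T(H_r(\tilde M))$) \emph{before} the $a<b$ case of item~2, since Step~A only uses the ambient map $H_r(\tilde M)\to H^{BM}_r(\tilde M)$ and is independent of surjectivity on the $\mathbb F$'s. Then for $a<b$ and $y\in\,^{BM}\mathbb F^{\tilde f}_r(a,b)$, pick $a'\ge b$ and $b''\le a$; by the $a\ge b$ case choose $x\in\mathbb F^{\tilde f}_r(a',b)$ and $x''\in\mathbb F^{\tilde f}_r(a,b'')$ both mapping to $y$. Then $x-x''\in C_r(M)=T(H_r(\tilde M))\subseteq\mathbb I_{-\infty}(r)\subseteq\mathbb I_a(r)$, so $x=x''+(x-x'')\in\mathbb I_a(r)$, while $x\in\mathbb I^b(r)$ already; hence $x\in\mathbb F^{\tilde f}_r(a,b)$ and maps to $y$. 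With this fix your argument is complete and, in items~2 ($a\ge b$) and~3, somewhat more explicit than the one sketched in the paper.
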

\begin{proof}
One shows first that one has  a natural short exact sequence $$0\to C_r(M) \to \mathbb F^{\tilde f}_r(a,b) \to ^{BM}\mathbb F^{\tilde f}_r(a,b)\to 0$$   which is compatible with the action provided by the deck transformations and leaving $C_r(M)$ fixed, and second that  
$C_r(M)$ is exactly the $\kappa[t^{-1}, t]-$torsion of the $H_r(\tilde M).$  Precisely one show that one  has the following commutative diagram with exact sequences as rows and $C_r(M)= \mathbb I^{\tilde f}_{-\infty}(r) + \mathbb I^\infty_{\tilde f}(r).$ 

\begin{equation}\label {E14}
\xymatrix {0\ar[r]& C_r(M) \ar[r]\ar[d]^{t_r= id} &\mathbb F^{\tilde f}_r(a,b)\ar[r]\ar[d]^{t_r} & ^{BM}\mathbb F^{\tilde f}_r(a,b)\ar[r]\ar[d]^{t^{BM}_r}& 0\\
0\ar[r]& C_r(M) \ar[r] &\mathbb F^{\tilde f}_r(a+2\pi ,b+2\pi)\ar[r] & ^{BM}\mathbb F^{\tilde f}_r(a+2\pi,b+2\pi)\ar[r]& 0.}
\end{equation}

The proof uses diagram (\ref {E00}) below where $-l < a' <a$ and $b<b' <t.$  In this diagram the vertical columns  are exact sequences
By passing to limits when $l, t \to \infty,$ diagram (\ref{E00}) induces diagram (\ref{E01}) which  provides the relation between $\mathbb F_r (a,b), \mathbb F_r(a',b'), H_r(\tilde M)$ and their Borel-Moore versions. 
\vskip .2in
 
\begin{equation}\label {E00}
\xymatrix{
H_{r-1}(\tilde M_{-l})\ar[r]^{=} &H_{r-1}(\tilde M_{-l})\ar[r]^{\hat i_{-l}(r-1)} &H_{r-1}(\tilde M_{-l}\sqcup \tilde M^t)&H_{r-1}(\tilde M^t)\ar[l]_{\hat i^t(r-1)}&H_{r-1}(\tilde M^t)\ar[l]_{=}\\
H_r(\tilde M_{a'},\tilde M_{-l})\ar[r]\ar[u]  &H_r(\tilde M_a, \tilde M_{-l})\ar[r]\ar[u] &H_r(\tilde M, \tilde M_{-l}\sqcup \tilde M^t)\ar[u]  &H_r(\tilde M^b, \tilde M^t)\ar[l]\ar[u] &H_r(\tilde M^{b'}, \tilde M^t)\ar[l]\ar[u]       \\
H_r(\tilde M_{a'})\ar[r]\ar[u] \ar@/^1pc/[rr]^<<<{i_{a'}(r)}&H_r(\tilde M_a)\ar[r]_{i_a(r)}\ar[u]  &H_r(\tilde M)\ar[u] &H_r(\tilde M^b)\ar[l]^{i^b(r)}\ar[u]&H_r(\tilde M^{b'})\ar[l]\ar@/_1pc/[ll]_<<<{i^{b'}(r)} \ar[u]\\                                                                              
H_r(\tilde M_{-l})\ar[r]^{=}\ar[u] &H_r(\tilde M_{-l})\ar[r]^{\hat i_{-l}(r)}\ar[u] &H_r(\tilde M_{-l}\sqcup \tilde M^t)\ar[u]&H_r(\tilde M^t)\ar[l]_{\hat i^t(r)}\ar[u]&H_r(\tilde M^t)\ar[l]_{=}\ar[u]}
\end{equation}.

\begin{equation} \label {E01}
\xymatrix 
{H^{BM}( \tilde M_{a'})\ar[r]\ar@/^1pc/[rr]^{i_{a'}^{BM}(r)}& H^{BM}_r( \tilde M_a)\ar[r]_{i^{BM}_a(r)} &H_r^{BM}(\tilde M) & H^{BM}_r(\tilde M^b)\ar[l]_{(i^{BM})^b(r)} & H^{BM}(\tilde M^{b'})\ar[l]\ar@/^1pc/[ll]^{(i^{BM})^{b'}(r)}
\\
H( \tilde M_{a'})\ar[r]\ar[u]& H_r( \tilde M_a)\ar[u]\ar[r]^{i_a(r)}&H_r(\tilde M)\ar[u]  & H_r(\tilde M^b)\ar[u] \ar[l]_{i^b(r)} & H_r(\tilde M^{b'})\ar[l]\ar[u]}
\end{equation}
Diagram (\ref{E01}) leads to  the linear map $\mathbb F^{\tilde f}_r (a,b)\to ^{BM} \mathbb F^{\tilde f}_r (a,b)$  and establishes the compatibility with the deck transformations, hence Item 1.  of Proposition \ref{P52}.

\vskip .1in

Since $^{BM}\mathbb F^{\tilde f}_r (a,b) =\img ((i^{BM}_a(r))  \cap \img ((i^{BM})^b(r))$ 
and \  $img(\hat i_{-l}(r)\cap img(\hat i^{t}(r))=0$ for any $r, l, t,$  a careful analysis of the projective limit  
and  of the diagram (\ref{E00}) 
implies that
$$\mathbb F^{\tilde f}_r(a,b)\to ^{BM}\mathbb F_r^{\tilde f}(a,b)$$ 
is surjective, (hence Item 2. holds), with kernel  isomorphic to  
$$\varprojlim_{\tiny 0< l,t\to \infty} {\img (H_r(\tilde M_{-l}\sqcup \tilde M^t)\to H_r(\tilde M))}= \mathbb I_{-\infty}(r)+ \mathbb I^{\infty}(r)= C_r(M).$$ In view of Proposition (\ref{P36}) \ $C_r(M)$ is equal to $T H_r(\tilde M) $ hence Item 3. 
holds too.   

\end{proof}
\vskip .1in

The diagram (\ref{E13}) and the above observations induce the  diagram (\ref{E14}) with first three horizontal arrows isomorphisms and the last  arrow ($PD^N$ ) injective and $\kappa[t^{-1},t]-$linear.

\begin{equation} \label {E14}
\xymatrix {\mathbb F^{\tilde f}_r(a,b)/ T(H_r(\tilde M)= ^{BM}\mathbb F^{\tilde f}_r(a,b)\ar[ddd]\ar[dr]\ar[rr]^{PD^{BM}_r(a,b)}&&(\mathbb G^{\tilde f}_{n-r}(b,a))^\ast\ar[dr]^{u^\ast _{n-r}}\ar[ddd]^{p_r^\ast}&&\\
&\mathbb F^{\tilde f}_r(B)\ar[d]^{\pi^{ab}_{B,r}}\ar[rr]^{PD_r(B)}&&(\mathbb G^{\tilde f}_{n-r}(B'))^\ast&\\
&\hat\delta^{\tilde f}_r(a,b)\ar[rr]^{\hat{PD_r}(a,b)}&&(\hat\delta^{\tilde f}_{n-r}(b,a))^\ast\\
H^{N}_r(M ;\xi)\ar[rr]^{PD^{N}_r}&&(H^N_{n-r}(M;\xi))^\ast &&}
\end{equation}
Indeed, the first three  horizontal arrows are isomorphisms in view of the isomorphism $\mathbb F^{\tilde f}_r(a,b)/ T(H_r(\tilde M)\simeq \  ^{BM}\mathbb F^{\tilde f}_r(a,b).$
 
 The bottom arrow is the composition 

\begin{equation}\label {EE21}
\begin{aligned}
H_r(\tilde M)/ TH_r(\tilde M)= H^{N}_r(M;\xi) \to H^{BM}_r(\tilde M)/ TH^{BM}_r(\tilde M) \to\\
\to   (H_{n-r}(\tilde M))^\ast / T(H_{n-r}(\tilde M)^\ast)\xleftarrow{=}
(H^N_{n-r}(M;\xi))^\ast
\end{aligned}
\end{equation}
with the first arrow 
$\kappa[t^{-1},t]-$linear and injective in view of Proposition \ref{P52} item 3. and the second arrow, $ H^{BM}_r(\tilde M)/ T(H^{BM}_r(\tilde M)) \to
 H_{n-r}(\tilde M)^\ast/ T(H_{n-r}(\tilde M)^\ast)$ a 
 $\kappa[t^{-1},t]-$linear  isomorphism in view  of the isomorphism $PD:\  ^{BM}H_r(\tilde M)\to H_{n-r}(\tilde M)^\ast$  and $\xleftarrow{=}$ 
 a canonical isomorphism. 
Indeed,  the finite dimensionality of $T(H_{n-r}(\tilde M))$ and the isomorphism $H_{n-r}(\tilde M)\simeq H^N_{n-r}( M;\xi) \oplus T(H_{n-r}(\tilde M))$ imply that the composition $H^N_r(M;\xi)^\ast \to H_{n-r}(\tilde M)^\ast  \to H_{n-r}(\tilde M)^\ast / T( H_{n-r}(\tilde M)^\ast)$ is a (canonical) isomorphism.  

\begin{obs}\label {OO}: 
\begin{enumerate}
\item the  diagram  
 \begin{equation}
\xymatrix{\hat \delta ^{\tilde f}_r(a,b) \ar[d]^{\hat t_r}\ar[rrr]^{\hat{PD}_r(a,b)} &&&(\hat \delta ^{\tilde f}_{n-r}r(b,a))^\ast\\ 
\hat \delta ^{\tilde f}_r(a+2\pi,b +2\pi) \ar[rrr]^{\hat{PD}_r(a+2\pi, b+2\pi)} &&&(\hat \delta ^{\tilde f}_{n-r}r(b+2\pi,a+2\pi))^\ast\ar[u]^{\hat t_r^\ast}}
\end{equation}
is commutative,
\item
 $\hat\delta^{\tilde f}(a,b)$ is a finite dimensional vector space and therefore
 $(\hat\delta^{\tilde f}(a,b))^\ast$ is a finite dimensional vector space isomorphic to $\hat\delta^{\tilde f}(a,b).$
\end{enumerate}
\end{obs}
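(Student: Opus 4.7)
The plan is to handle the two items separately, with item~1 reducing to naturality of Poincar\'e Duality under the deck transformation and item~2 being an immediate consequence of finite dimensionality statements already established.

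For item~1, the starting point is the observation that the deck transformation $\tau:\tilde M\to\tilde M$ is an orientation-preserving homeomorphism satisfying $\tilde f\circ\tau=\tilde f+2\pi$, so it maps $\tilde M_a$ homeomorphically onto $\tilde M_{a+2\pi}$ and $\tilde M^b$ onto $\tilde M^{b+2\pi}$. Consequently $\tau_\ast$ induces the isomorphisms denoted $t_r$, $t_r(B)$ and $\hat t_r(a,b)$ appearing in diagram~(\ref{E313}), and analogously it induces isomorphisms of the quotients $\mathbb G^{\tilde f}_{n-r}(b,a)\to\mathbb G^{\tilde f}_{n-r}(b+2\pi,a+2\pi)$. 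The classical Poincar\'e Duality diagrams (\ref{PD1}) and (\ref{PD1'}) are natural with respect to orientation-preserving homeomorphisms of $\tilde M$ that preserve the filtrations by $\tilde M_a$ and $\tilde M^b$; $\tau$ is precisely such a homeomorphism once we match levels via the shift $a\mapsto a+2\pi$, $b\mapsto b+2\pi$. Hence $PD^{BM}_r$ intertwines $\tau_\ast$ on $H^{BM}_r(\tilde M)$ with $(\tau_\ast^{-1})^\ast$ on $(H_{n-r}(\tilde M))^\ast$, and this intertwining restricts to the subspaces $^{BM}\mathbb F^{\tilde f}_r(a,b)$ and $(\mathbb G^{\tilde f}_{n-r}(b,a))^\ast$ that appear in diagram~(\ref{E13}).

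To conclude commutativity at the level of $\hat\delta^{\tilde f}_r$, one passes from the top of diagram~(\ref{E14}) to its third row by the canonical surjections, which by construction are $\tau_\ast$-equivariant (the surjection $^{BM}\mathbb F^{\tilde f}_r(a,b)\twoheadrightarrow\hat\delta^{\tilde f}_r(a,b)$ is obtained from the short exact sequence in Proposition~\ref{P52}, whose middle map is $\tau_\ast$-equivariant by item~1 of that proposition and whose kernel is preserved). Since the induced map $\hat{PD}_r(a,b)$ is uniquely determined by its compatibility with these equivariant surjections and with $PD^{BM}_r(a,b)$, the square in Observation~\ref{OO}.(1) commutes. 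The main subtlety here is being careful that the identification of $\mathbb G^{\tilde f}_{n-r}(B')$ with $\mathbb F^{\tilde f}_{n-r}(B')^\ast$ is also $\tau$-equivariant; this however follows because $\theta_r(B)$ is constructed purely from the inclusion between intersection and sum of image submodules, and $\tau_\ast$ respects both of these operations.

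For item~2, one applies Proposition~\ref{P3}.(1) to write $\hat\delta^{\tilde f}_r(a,b)=\mathbb F^{\tilde f}_r(B(a,b;\epsilon))$ for $\epsilon$ sufficiently small, and then Proposition~\ref{P1}.(3) which asserts that $\mathbb F^{\tilde f}_r(a,b)$, and hence any quotient such as $\mathbb F^{\tilde f}_r(B(a,b;\epsilon))$, is a finite dimensional $\kappa$-vector space. Since any finite dimensional vector space over a field is (non-canonically) isomorphic to its dual of the same dimension, the second assertion follows at once. I expect the only mild obstacle in the entire proof to be bookkeeping for the naturality of the Poincar\'e Duality diagrams (\ref{PD1}) and (\ref{PD1'}) with respect to orientation-preserving homeomorphisms, together with the compatibility of the limits in (\ref{E23}) with $\tau$; both are routine but must be verified level by level on the approximating pairs before passing to the inverse limit.
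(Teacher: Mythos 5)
Your proof is correct and follows the same route the paper has implicitly in mind: the paper states Observation \ref{OO} without a dedicated argument, treating it as an immediate consequence of the $\kappa[t^{-1},t]$-equivariance of everything in diagram~(\ref{E14}), which is exactly the naturality-of-Poincar\'e-Duality-under-$\tau$ argument you spell out, together with Proposition~\ref{P52}.(1) for the equivariance of the surjections to $\hat\delta^{\tilde f}_r(a,b)$; and item~2 is, as you say, just Proposition~\ref{P3}.(1) combined with Proposition~\ref{P1}.(3). Your explicit unwinding (that $PD^{BM}_r$ intertwines $\tau_\ast$ with $(\tau_\ast^{-1})^\ast$, which after rearranging gives the stated square with $\hat t_r^\ast$ on the right edge) is exactly the content the observation records.
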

Define 
$$\boxed{\hat \delta^f_r \langle a, b\rangle:= \oplus_{k\in \mathbb Z} \delta^{\tilde f}_r(a+2\pi k. b+2\pi k)}$$
which equipped with the isomorphism $\oplus_{k\in \mathbb Z} \hat t_r(a+2\pi k,b+2\pi k)$ is a free $\kappa[t^{-1},t]-$module and  
 $$\boxed{(\hat \delta^f_r)^\ast  \langle a, b\rangle:\oplus_{k\in \mathbb Z} (\delta^{\tilde f}_r(a+2\pi k. b+2\pi k))^\ast}$$
 which equipped with the isomorphism $\oplus_{k\in \mathbb Z} \hat t_r(a+2\pi k,b+2\pi k)$ is a free $\kappa[t^{-1},t]-$module.

Note that $(\hat \delta^f_r)^\ast  \langle a, b\rangle$ is not the same as  $(\hat \delta^f_r \langle a, b\rangle)^\ast$ actually $(\hat \delta^f_r)^\ast  \langle a, b\rangle \subseteq (\hat \delta^f_r  \langle a, b\rangle)^\ast$ the first is a a f.g. free $\kappa[t^{-1},t]$ module the second is in general infinitely generated  but in case  of equality which happens only in case  that $\hat \delta^f_r \langle a, b\rangle= 0.$

\vskip .1in
{\it Finalizing the proof  of Theorem \ref{TT3}}

 In view of Observation \ref{OO} item 2.  and Diagram \ref{E14}above we have the isomorphism of $\kappa[t^{-1},t]-$modules 
$$\boxed{\hat {PD}_r \langle a, b\rangle: \hat\delta^f_r\langle a,b\rangle \to  (\hat\delta^f_{n-r})^\ast\langle b,a\rangle}.$$ 

The choice of compatible splittings $"\mathcal S``$ provides an isomorphism of $\kappa[t^{-1},t]-$ modules  
$$\pi(r) \cdot  I^{\mathcal S}_r: \oplus _{\langle a, b\rangle} (\hat \delta^f_r)^\ast  \langle a, b\rangle \to H^N_r(M; \xi_f)$$  and then establishes the isomorphism of $H^N_r(X; \xi_f)$ to $H^N_{n-r}(M; \xi_f)$ which intertwines 
$(\hat \delta^f_r)^\ast  \langle a, b\rangle$ with $(\hat \delta^f_{n-r})^\ast  \langle a, b\rangle.$ 
This establishes items 1. and 2. in Theorem \ref {TT3}.

Suppose that $\kappa= \mathbb R$ or $\kappa= \mathbb R.$  Choose a non degenerate positive definite inner product on $H_r(\tilde M)$ which makes $t_r$ an isometry  for any $r.$ Such inner 
product can be provided by a Riemannian metric on $M$ when $M$ is a closed smooth manifold or by a triangulation of $M$ when $M$ is triangulable.
simply by lifting the metric or the triangulation on $\tilde M.$  

This inner products provide canonical compatible splittings which  realize canonically $\hat \delta^{\tilde f}(a,b)$ as subspaces of $H_r(\tilde M)$ and then of  $H^N_r(M;\xi_f)$ and lead to the embedding of $\hat \delta^f_r\langle a,b\rangle$ as a sub $\kappa[t^{-1},t]-$module of $H^N(X;\xi_f)$ (in view of the observation that the images of $\hat \delta^{\tilde f}(a,b)$and $\hat \delta^{\tilde f}(a+2\pi,b+2\pi)$ are orthogonal and $\hat t_r$ is an isometry). These canonical splittings  make 
$\pi(r) \cdot I^{\mathcal S}_r$ a canonical isomorphism. The inner products on $\hat\delta^{\tilde f}_r(a,b)$ induced from the inner product on $H_r(\tilde M)$   
canonically identifies  $\hat \delta^{\tilde f}_r(a,b)$  to $\hat \delta^{\tilde f}_r(a,b)^\ast$ then $\hat \delta^f_r\langle a,b\rangle$  to 
$(\hat \delta^f_r)^\ast\langle a,b\rangle$ and provides a canonical isomorphism
$\hat{PD}_r\langle a, b\rangle : \hat \delta^f_r\langle a,b\rangle  \to 
\hat \delta^f_{n-r} \langle a,b\rangle$ and then the isomorphism $\hat{PD}_r: H^N(M,\xi_f)\to 
H^N(M;\xi_f)_{n-r}.$ 

In case $\kappa= \mathbb C,$ 
if $\hat{\hat \delta}^f_r\langle a, b\rangle$ 
denotes the von Neumann completion  of $\hat \delta^f_r\langle a, b\rangle$  (note that $H^{L_2}(\tilde M)$ is the von Neumann completion  of $H^N(M;\xi_f)$) then $\hat{\hat \delta}^f\langle a, b\rangle$ is a closed Hilbert submodule of $H^{L_2}(\tilde M).$
Moreover
the  von Neumann completion  leads to the canonical isomorphism of $L^\infty(\mathbb S^1)-$Hilbert modules 
$$\hat {\hat {PD}}_r: H^{L_2}(\tilde M)\to H^{L_2}_{n-r}(\tilde M)$$ which intertwines 
 $\hat{\hat \delta}^f_r\langle a, b\rangle$ with $\hat{\hat \delta}^f_{n-r}\langle b, a\rangle.$
This establishes item 3. of Theorem \ref {TT3}.

\section {Proof of Observation \ref{C4} and Theorem \ref{T15}}\

{\it Proof of Observation \ref{C4}}
 
Suppose $X= X_1\cup X_2, Y= X_1\cap X_2$ with $X_1, X_2, Y$ closed subsets of $X$ with $X, X_1, X_2, Y$ all compact ANRs. Suppose $\xi \in H^1(X;\mathbb Z)$ and let $\xi_1, \xi_2, \xi_o$ be the pull backs of $\xi$ on $X_1, X_2, Y$ and let $\tilde X, \tilde X_1,\tilde X_2, \tilde Y$ be the infinite cyclic cover of $\xi, \xi_1, \xi_2, \xi_0. $ Note that 
$\tilde X= \tilde X_1\cup\tilde X_2$ $\tilde X_1\cap\tilde X_2= \tilde Y$ and then the long exact sequence in homology 
$$\xymatrix{\cdots\ar[r]&H_r(\tilde Y)\ar[r] & H_r(\tilde X_1)\oplus H_r(\tilde X_1) \ar[r]&H_r(\tilde X)\ar[r] &H_{r-1}(\tilde Y)\ar[r] &\cdots}$$    
is a sequence of $\kappa[t^{-1},t]-$modules with all arrows $\kappa[t^{-1},t]-$linear. If $H^N(Y;\xi_0)=0$ then $\beta^N_r(X_1;\xi_1)+ \beta^N_r(X_2;\xi_2)= \beta^N_r(X;\xi).$ 

We apply this to the double $X= DM= M_1\cup_{\partial M} M_2$ with $M_1$ equal to $M$ and $M_2$ equal to the  manifold $M$ with the opposite orientation  
and $\xi_D\in H^1(DM;\mathbb Z)$ a cohomology class which restricts to $\xi $ on $M_1$ and $M_2$ respectively. 
Clearly $\beta^N_r(DM;\xi_D)= 2 \beta^N_r(M; \xi).$  Since $DM$ is closed and orientable, consequently satisfies $\beta^N_r(DM;\xi_D)= \beta^N_{n-r}(DM; \xi_D),$ the statement follows.

q.e.d.
\vskip .1in
{\it Proof of Theorem \ref{T15}}

Items 1.  and 2. (a) follow from Observation \ref {C4} and the fact that both Betti numbers and Novikov--Betti numbers calculate the same Euler--Poincar\'e characteristic
cf \cite {F} or  \cite {Pa}.  

Items 1. and  2 (b)and (c)  follow from Proposition 4.1 in \cite {BH} which calculates $H_r(X;(\xi, u)).$

Item 3.  follows from  Theorem 1.4 in \cite{BH}. Indeed the hypotheses imply the existence of a tame map $f: M\to \mathbb S^1$ with a given angle $\theta$ regular value and $V= f^{-1}(\theta).$  Since the homology of $V$ is trivial in all dimensions but zero, the relation $R^\theta_r$ $r\ne 0$ is the trivial relation and  $R^\theta_0= id_\kappa.$  The statement follows from the description of Jordan cells in terms   of linear relations $R^\theta_r$ given by Theorem 1.4 in \cite{BH}.

As pointed out to us by L Maxim, the complement $X = \mathbb C^n\setminus V$ of a complex hyper surface $V\subset \mathbb C^n, V:= \{(z_1, z_2, \cdots z_n) \mid f(z_1, z_2, \cdots z_n)=0\}$ regular at infinity, 
equipped with the canonical class $\xi_f\in H^1(X:\mathbb Z)$ defined by $f: X\to \mathbb C\setminus 0$ is an example of an open manifold with an integral  cohomology class which has as compactification a manifold with boundary with a cohomology class which satisfies the hypotheses above.  

Item 1. recovers    
 a calculation of L Maxim, cf \cite{M14} and \cite {FM16} \footnote {The  Friedl-Maxim results  state the vanishing of more general and more sophisticated $L_2-$homologies and Novikov type  homologies. They can be also recovered via the appropriate Poincar\'e Duality isomorphisms}   
that the complement of an algebraic hyper surface regular at infinity   
has vanishing Novikov homologies in all dimension but $n.$

\section { Appendix (Poincar\'e duality for non closed manifolds derived from $\tilde M$)}  

Consistent with the previous notations let  $\tilde M(a,b)$ and $\tilde M(c)$ denote the compact set $\tilde f^{-1}([a,b])$ and $\tilde f^{-1}(a)$ which for $a,b,c $  regular
values are  sub manifolds of $\tilde M$ (the first, $M(a,b),$ is a  manifold with boundary $\partial M(a,b)= M(a)\sqcup M(b)$). We also recall that $\tilde M_a=\tilde f^{-1}((-\infty, a])$ and $\tilde M^b=\tilde f^{-1}([b,\infty)).$

Note that the Poincar\'e Duality for bordisms provides 
the isomorphisms  

\begin{equation}
\begin{aligned}
PD(-l,a)&: H_r(\tilde M(-l, a), \tilde M(-l)) \to H^{n-r}(\tilde M(-l, a), \tilde M(a)), \  -l < a \\ 
PD(b,t)&: H_r(\tilde M(b, t), \tilde M(t)) \to H^{n-r}(\tilde M(b, t), \tilde M(b)), \ t >b\\
PD(-l, t)&: H_r(\tilde M(-l, t), \tilde M(-l)\sqcup \tilde M(t)) \to H^{n-r}(\tilde M(-l, +t)), \ t,l>0 .\\
\end{aligned}
\end{equation}

Combining with excision property in homology or cohomology and 
passing to limit when $0<l\to \infty,$ and $0< l,t\to \infty$ one derives the Poincar\'e Duality isomorphisms 
 
\begin{equation}\label{EE1}
\begin{aligned}
PD^1_a: &H^{BM}_r(\tilde M_a)\to H^{n-r}(\tilde M, \tilde M^a)\\
PD^b_1: &H^{BM}_r(\tilde M^b)\to H^{n-r}(\tilde M, \tilde M_b)\\
PD : &H^{BM}_r(\tilde M)\to H^{n-r}(\tilde M)\\
PD^b_2: &H^{BM}_r(\tilde M, \tilde M^b)\to H^{n-r}(\tilde M_b)\\
PD^2_a: &H^{BM}_r(\tilde M, \tilde M_a)\to H^{n-r}(\tilde M^a)\\
\end{aligned}
\end{equation}

where\begin{equation}
\begin{aligned}
PD^1_a=\varprojlim_{l\to\infty} PD(-l, a), \ &\ 
PD_1^b=\varprojlim_{t\to\infty} PD(b,t)\\
PD= \varprojlim_{l\to\infty,
l\to \infty} PD(-l, t)\\
PD_2^b=\varprojlim_{l\to\infty, t=b} PD(-l, t), \ &\ 
PD_2^a=\varprojlim_{t\to\infty, -l=a} PD(-l, t) .
\end{aligned}
\end{equation}

These are the Poincar\'e Duality isomorphisms which appear in the diagrams (\ref{PD1})
and (\ref{PD1'}).

For example, in case of the first isomorphism in (\ref{EE1}),
\begin{equation*}
\begin{aligned}
H^{BM}(\tilde M_a)= &\varprojlim_{l\to \infty} (H_r(\tilde M(-l, a), \tilde M(-l)) \\
H^{n-r}(\tilde M, \tilde M^a)=H^{n-r}(\tilde M_a,\tilde M(a))= &\varprojlim_{l\to \infty} H^{n-r} (\tilde M(-l,a), \tilde M(a)) 
\end{aligned}\end{equation*}

where  the passage from $l$ to $l',$ \ $l'>l$, in the first equality above is derived from the commutative diagram
\vskip .1in

$\xymatrix{
H_r(\tilde M(-l,a), \tilde M(-l))\ar[r]^{=}\ar[d]&H_r(\tilde M(-l',a), \tilde M(-l',-l))&H_r(\tilde M(-l',a), \tilde M(-l'))\ar[l]\ar[d]\\ 
H^{n-r}(\tilde M(-l,a), \tilde M(a))&&H^{n-r}(\tilde M(-l',a), \tilde M(a))\ar[ll]}$

$\xleftarrow  {=}$

\vskip .4in 

\end{document}